\newtheorem{theorem}{Theorem}
\newtheorem{proposition}[theorem]{Proposition}%
\newtheorem{remark}{Remark}%
\newcommand{\ds}{\displaystyle}
\newcommand{\by}{\bm{y}}
\newcommand{\bx}{\bm{x}}
\newcommand{\bp}{\bm{p}}
\newcommand{\bk}{\bm{k}}
\newcommand{\bj}{\bm{j}}
\newcommand{\bK}{\bm{K}}
\newcommand{\bs}{\bm{s}}
\newcommand{\bP}{\bm{P}}
\newcommand{\bmm}{\bm{m}}
\newcommand{\blam}{\bm{\lambda}}
\newcommand{\bbR}{\mathbb{R}}
\newcommand{\bbZ}{\mathbb{Z}}
\newcommand{\bbQ}{\mathbb{Q}}
\newcommand{\bbN}{\mathbb{N}}
\newcommand{\bbT}{\mathbb{T}}
\newcommand{\calC}{\mathcal{C}}
\newcommand{\calS}{\mathcal{S}}
\newcommand{\calM}{\mathcal{M}}
\newcommand{\calF}{\mathcal{F}}
\newcommand{\calI}{\mathcal{I}}
\newcommand{\hpsi}{\hat{\psi}}
\newcommand{\hU}{\hat{U}}
\newcommand{\tpsi}{\tilde{\psi}}
\newcommand{\tPsi}{\tilde{\Psi}}
\newcommand{\tphi}{\tilde{\phi}}
\newcommand{\QP}{\mbox{QP}}
\newcommand{\Err}{\mbox{Err}}
\newtheorem{thm}{Theorem}[section]
\newtheorem{lemma}[thm]{Lemma}
\newtheorem{defy}[thm]{Definition}
\newcommand\tbbint{{-\mkern -16mu\int}}
\newcommand\dbbint{{-\mkern -19mu\int}}
\newcommand\bbint{
	{\mathchoice{\dbbint}{\tbbint}{\tbbint}{\tbbint}}
}
\begin{document}
	
	\title[Article Title]{Convergence analysis of time-splitting projection method for nonlinear quasiperiodic Schr\"{o}dinger equation}
	
	
	\author[1]{\fnm{Kai} \sur{Jiang}}\email{kaijiang@xtu.edu.cn}
	
	\author[2]{\fnm{Shifeng} \sur{Li}}\email{shifengli@whu.edu.cn}
	
	\author[3]{\fnm{Xiangcheng} \sur{Zheng}}\email{xzheng@sdu.edu.cn}
	
	\affil[1]{Department of Mathematics and Computational
		Science, Xiangtan University, Xiangtan, Hunan,
		411105, P. R. China}
	\affil[2]{School of Mathematics and Statistics, Wuhan University, Wuhan, 430072, China}
	\affil[3]{School of Mathematics, Shandong University, Jinan, Shandong 250100, China}
	
	%
	
	
	\abstract{This work proposes and analyzes an efficient numerical method for solving the nonlinear Schr\"odinger equation with quasiperiodic potential, where the projection method is applied in space to account for the quasiperiodic structure and the Strang splitting method is used in time. 
		While the transfer between spaces of low-dimensional quasiperiodic and high-dimensional periodic functions and its coupling with the nonlinearity of the operator splitting scheme make the analysis more challenging. 
		Meanwhile, compared to conventional numerical analysis of periodic Schr\"odinger systems, many of the tools and theories are not applicable to the quasiperiodic case. We address these issues to prove the spectral accuracy in space and the second-order accuracy in time. Numerical experiments are performed to substantiate the theoretical findings.
	}

	\keywords{Nonlinear Schr\"{o}dinger equation, Quasiperiodic system,
		Projection method, Error estimate}
	
	
	
	\maketitle
	
	\section{Introduction}
	\label{sec:Intro}
	The nonlinear Schr\"{o}dinger equation with the cubic nonlinearity is a universal mathematical model describing many physical phenomena, and there exist extensive investigations for this model with periodic potentials or given boundary conditions, see e.g. \cite{Schrodinger1926undulatory,
		Thalhammer2012Convergence,Auzinger2015Defect,Dirac1958principles,Gauckler2010Nonlinear,
		Gauckler2010Convergence,Jahnke2000error,
		LiWu,LiZha}.
	In recent years, the quasiperiodic potential $V$ is increasingly considered in various fields. 
	For instance, the Schr\"{o}dinger system with a potential function $V=V_1+V_2$ is typically considered in Moir\'e lattices \cite{Ablowitz2006Solitons,Bagci2021Soliton,Wang2020Localization}, where $V_1$ represents a periodic lattice and $V_2$ is a rotated $V_1$ by a twisted angle. Varying this twisted angle could result in a transformation between periodic and quasiperiodic structures, as well as a state transition between localization and delocalization in these systems.
	In fact, the potential function $V$ often appears incommensurate, which is crucial to the construction of quasiperiodic potentials. Even more surprisingly, the nonlinear quasiperiodic Schr\"odinger systems also exhibit more intriguing phenomena of significant scientific value, including metal-insulator transitions, defects, Anderson localization, dislocations and aperiodic lattice solitons \cite{Aubry1980Effects,Fu2020Optical,li2021numerical,Shi2024Anderson}.
	

	Motivated by the above discussions, we consider the $d$-dimensional nonlinear quasiperiodic Schr\"odinger equation (NQSE) with the real-valued smooth quasiperiodic potential function $V(\bx)$
	\begin{align}
		\begin{cases}
			i\dfrac{\partial \psi(\bx,t)}{\partial t}=-\Delta \psi(\bx,t)+V(\bx)\psi(\bx,t)+\theta \vert \psi(\bx,t)\vert^2 \psi(\bx,t) ,~~(\bx,t)\in\bbR^d\times[0,T],\\
			\psi(0)=\psi(\bx, 0),
		\end{cases}
		\label{eq:QSE}
	\end{align}
	where $\Delta= \sum_{j=1}^d \partial_{x_j x_j}$ is the Laplace operator and the cubic nonlinear term originates from the nonlinear (Kerr) change of the
	refractive index with $\vert \psi(\bx,t)\vert^2= \psi(\bx,t)\bar{\psi}(\bx,t)$. The parameter $\theta\in \bbR$  describes the strength of the nonlinearity.
	
	
	
	
	There has been some improvement in the research on the mathematical theory of time- or space-quasiperiodic Schr\"odinger systems. For instance, Avila \textit{et al.} made significant contributions to the spectral theory of quasiperiodic Schr\"odinger operators (QSOs) from the perspective of dynamical systems \cite{Avila2009spectrum,Avila2015Sharp,Berti2012Sobolev,Bourgain1994Construction,Marx2017Dynamics}. Wang \textit{et al.} investigated Anderson localization of QSOs and the existence of solutions to quasiperiodic Schr\"odinger equations (QSEs) \cite{Shi2021Absence,Wang2020Space,Wang2022Infinite}. 
	Although research on various aspects of quasiperiodic Schr\"odinger systems has attracted significant attention in recent years, many issues remain unresolved in the study of these systems, including the spectral theory of high-dimensional QSOs, the well-posedness of quasiperiodic solutions for various Schr\"odinger equations, and the development of associated numerical algorithms. Numerically, quasiperiodic systems lack translation symmetry, have no boundary and are not decay, which introduce significant challenges. Therefore, our goal is not only to provide an effective numerical algorithm but also to establish rigorous theoretical analysis that ensures the reliability of our method, thereby advancing the development of quasiperiodic Schr\"odinger systems in mathematics and related application fields. There are several numerical methods to solve quasiperiodic systems, such as periodic approximation method (PAM), quasiperiodic spectral method (QSM) and the projection method (PM).
	PAM is a commonly used approach for solving quasiperiodic systems \cite{Ablowitz2006Solitons,Wang2020Localization,Fu2020Optical}. Its core idea is to approximate a quasiperiodic system by a periodic function system within a finite domain. While relatively mature research mechanisms exist for these approximation systems, this algorithm still faces unavoidable rational approximation errors and is constrained by a limited approximation domain \cite{JiaZha,Jiang2022approximation}.
	To resolve this issue, QSM and PM have been developed \cite{JiaZha} and applied to solve the linear quasiperiodic Schr\"odinger equation \cite{jiang2023High-accuracy}, which corresponds to model (\ref{eq:QSE}) with $\theta=0$. For the case $\theta\neq 0$, efficient numerical method and rigorous numerical analysis remain untreated.
	
	This work proposes and analyzes an efficient numerical method for solving the nonlinear Schr\"odinger equation with quasiperiodic potential, where PM is applied in space to account for the quasiperiodic structure, and the Strang splitting method is used in time, see e.g. \cite{Bao2003Numerical,Bao2021Uniform,Wu2024Error,Einkemmer2013almost,Einkemmer2014Convergence}. 
	Compared to the numerical analysis of periodic Schr\"odinger equations,
	the analysis of algorithms for solving NQSEs presents greater challenges. One major challenge arises from the relatively dense Fourier frequencies of quasiperiodic functions, for which some conventional analytical tools and theories, such as the Sobolev embedding theorems, are no longer valid in the numerical analysis of NQSEs. To address this, we utilize the regularity of the corresponding parent functions to control the regularity of quasiperiodic functions. 
	Meanwhile, novel strategies have been introduced in the analytical process. For example, we devise an auxiliary high-dimensional periodic function system (See \eqref{eq:QSE-parent} for details) to manage the upper bound of the numerical quasiperiodic solutions at each iteration step.

	The rest of the paper is organized as follows: In Section \ref{sec2}, several preliminary results are presented or proved.
	Section \ref{sec:NA} introduces the numerical scheme and its error analysis results. Section \ref{sec:analysis} provides miscellaneous auxiliary estimates that support the proof of the main theorem. Section \ref{sec:Num} presents numerical results that validate the theoretical analysis. Finally, a conclusion is presented in Section \ref{sec:diss}.

	\section{Preliminaries}\label{sec2}
	
	\subsection{Notations}
	Let $\bbQ$ be the set of rational numbers. We recall the definition of the quasiperiodic function.
	\begin{defy}
		A matrix $\bP\in\bbR^{d\times n}$ is the projection matrix, if it belongs to the set $\mathbb P
		:=\{\bP=(\bp_1,\cdots,\bp_n)\in\bbR^{d\times n}: \bp_1,\cdots,\bp_n  ~\mbox{are~} \bbQ\mbox{-linearly independent}\}.$
	\end{defy}
	\begin{defy}
		\label{def:quasiperiodic}
		A $d$-dimensional function $\psi(\bx)$ is quasiperiodic if there exists an $n$-dimensional periodic function $\psi_p$ and a projection matrix $\bP\in \mathbb P^{d\times n}$, such that $\psi(\bx)=\psi_p(\bP^T \bx)$ for all $\bx\in\bbR^d$.
	\end{defy}
	
	With this definition, let $\QP(\bbR^d)$ be the space of all $d$-dimensional quasiperiodic functions. For convenience, $\psi_p$ in Definition \ref{def:quasiperiodic} is called the parent function of $\psi$. Next, we present some relations between the quasiperiodic functions and their parent functions.

	\begin{proposition}
		\label{pro:projectmatrix}
		For quasiperiodic functions $\phi,\varphi\in \QP(\bbR^d)$, $\phi_p$ and $\varphi_p$ are the $n_1$- and $n_2$-dimensional parent functions of $\phi$ and $\varphi$, respectively. Assume that there exist projection matrices 
		\begin{align*}
			\bP_1=(\alpha_1,\cdots,\alpha_{n_1})\in\bbR^{d\times n_1},~~
			\bP_2=(\beta_1,\cdots,\beta_{n_2})\in\bbR^{d\times n_2},
		\end{align*}
		such that $\phi(\bx)=\phi_p(\bP_1^T\bx)$ and $\varphi(\bx)=\varphi_p(\bP_2^T\bx)$. Then for the projection matrix $\bP=(\gamma_1,\cdots,\gamma_{n_3}),$ where $\{\gamma_1,\cdots,\gamma_{n_3}\}
		\subset\{\alpha_1,\cdots,\alpha_{n_1},\beta_1,\cdots,\beta_{n_2}\}$ is the largest  $\bbQ$-linearly independent vector set and $n_3 \geq \max\{n_1,n_2\}$, there exist $n_3$-dimensional periodic functions $\check \phi_p$ and $\check \varphi_p$ such that
		\begin{align*}
			\phi(\bx)=\check \phi_p(\bP^T\bx),~~\varphi(\bx)=\check \varphi_p(\bP^T\bx).
		\end{align*}
		Moreover, it follows that
		
		(i) For $\psi=\phi+\varphi$, the parent function $\psi_p$ of $\psi$ is $\psi_p=\check \phi_p+\check \varphi_p$.
		
		(ii) For $\psi=\phi\varphi$, the parent function $\psi_p$ of $\psi$ is $\psi_p=\check \phi_p\check \varphi_p$.
	\end{proposition}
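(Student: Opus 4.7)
The plan is to exploit the maximality of $\{\gamma_1,\ldots,\gamma_{n_3}\}$ as a $\bbQ$-linearly independent set in order to express $\bP_1$ and $\bP_2$ as rational transforms of $\bP$, then lift $\phi_p$ and $\varphi_p$ to candidate parent functions on $\bbR^{n_3}$ by precomposition; the rationality of the connecting matrices will then supply the required $n_3$-dimensional periodicity. Concretely, since adjoining any $\alpha_i$ (resp.\ $\beta_j$) to $\{\gamma_k\}$ destroys $\bbQ$-linear independence, each $\alpha_i$ is a $\bbQ$-linear combination of the $\gamma_k$'s, and likewise each $\beta_j$. Collecting these coefficients into rational matrices $\bQ_1\in\bbQ^{n_1\times n_3}$ and $\bQ_2\in\bbQ^{n_2\times n_3}$ gives
\begin{equation*}
\bP_1=\bP\,\bQ_1^T,\qquad \bP_2=\bP\,\bQ_2^T.
\end{equation*}

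I would then define the candidate lifted parents $\check\phi_p(\by):=\phi_p(\bQ_1\by)$ and $\check\varphi_p(\by):=\varphi_p(\bQ_2\by)$ for $\by\in\bbR^{n_3}$. The representation identities follow by direct substitution:
\begin{equation*}
\check\phi_p(\bP^T\bx)=\phi_p(\bQ_1\bP^T\bx)=\phi_p((\bP\bQ_1^T)^T\bx)=\phi_p(\bP_1^T\bx)=\phi(\bx),
\end{equation*}
and analogously $\check\varphi_p(\bP^T\bx)=\varphi(\bx)$.

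The principal obstacle is to verify that $\check\phi_p$ and $\check\varphi_p$ are genuine $n_3$-dimensional periodic functions sharing a common full-rank period lattice. Adopting the standard normalization that the parent functions have period $2\pi$ in each coordinate direction (so the period lattices are $(2\pi\bbZ)^{n_1}$ and $(2\pi\bbZ)^{n_2}$ respectively), rationality of $\bQ_1\be_k\in\bbQ^{n_1}$ lets me pick a positive integer $L_k^{(1)}$ that clears the coordinate-wise denominators, so that $2\pi L_k^{(1)}\bQ_1\be_k\in(2\pi\bbZ)^{n_1}$ and therefore $\check\phi_p(\by+2\pi L_k^{(1)}\be_k)=\check\phi_p(\by)$. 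The vectors $\{2\pi L_k^{(1)}\be_k\}_{k=1}^{n_3}$ are linearly independent, yielding $n_3$-dimensional periodicity of $\check\phi_p$ (the degenerate case $\bQ_1\be_k=\bo$ is immediate because $\check\phi_p$ is then constant in the $k$-th coordinate). Repeating the argument with $L_k^{(2)}$ for $\check\varphi_p$ and passing to $L_k:=\mathrm{lcm}(L_k^{(1)},L_k^{(2)})$ produces a common period lattice for both lifted parents.

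With this common $n_3$-dimensional periodicity in hand, parts (i) and (ii) follow at once: the sum $\check\phi_p+\check\varphi_p$ and the product $\check\phi_p\check\varphi_p$ are themselves $n_3$-dimensional periodic functions, and substituting $\by=\bP^T\bx$ recovers $\phi+\varphi$ and $\phi\varphi$ respectively, thereby identifying these combinations as the asserted parent functions of $\psi$.
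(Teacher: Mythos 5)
Your proof is correct and follows essentially the same route as the paper's: both use the maximality of $\{\gamma_1,\cdots,\gamma_{n_3}\}$ to write the columns of $\bP_1$ and $\bP_2$ as $\bbQ$-linear combinations of the $\gamma_k$'s and then precompose $\phi_p$, $\varphi_p$ with the resulting rational linear maps (the paper does this after a WLOG reordering that places all the $\alpha_i$ and some $\beta_j$ among the $\gamma_k$'s, whereas you treat both factors symmetrically via the matrices $\bQ_1,\bQ_2$). Your explicit denominator-clearing verification that $\check\phi_p$ and $\check\varphi_p$ are genuinely periodic with a common full-rank period lattice is left implicit in the paper, so this added care is welcome rather than a deviation.
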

	
	\begin{proof}
		Without loss of generality, we set
		$
		\gamma_1=\alpha_1,\cdots,\gamma_{n_1}=\alpha_{n_1},\gamma_{n_1+1}=\beta_1,\cdots,\gamma_{n_3}=\beta_s,
		$
		with $0\leq s\leq n_2$. Then for $s+1\leq j \leq n_2$, we have
		$
		\beta_j=\sum_{k=1}^{n_3}a_{j,k}\gamma_{k}$ for some $a_{j,k}\in \bbQ$.
		Then
		\begin{align*}
			\phi(\bx)=\phi_p(\alpha_1^T\bx,\cdots,\alpha_{n_1}^T\bx)=\phi_p(\gamma_1^T\bx,\cdots,\gamma_{n_1}^T\bx)
			=\check \phi_p(\gamma_1^T\bx,\cdots,\gamma_{n_3}^T\bx),
		\end{align*}
		where $\check \phi_p(y_1,\cdots,y_{n_3})=\phi_p(y_1,\cdots,y_{n_1})$.
		Similarly,
		\begin{align*}
			\varphi(\bx)
			&=\varphi_p(\beta_1^T\bx,\cdots,\beta_{n_2}^T\bx)\\
			&=\varphi_p(\gamma_{n_1+1}^T\bx,\cdots,\gamma_{n_1+s}^T\bx,\sum_{k=1}^{n_3}a_{{n_1+s+1},k}\gamma_{k}^T\bx,\cdots,
			\sum_{k=1}^{n_3}a_{{n_2},k}\gamma_{k}^T\bx)\\
			&=\check \varphi_p(\gamma_1^T\bx,\cdots,\gamma_{n_3}^T\bx),
		\end{align*}
		where $\check \varphi_p(y_1,\cdots,y_{n_3})
		=\varphi_p(y_{n_1+1},\cdots,y_{n_1+s},\sum_{k=1}^{n_3}a_{{n_1+s+1},k}y_k,\cdots,\sum_{k=1}^{n_3}a_{{n_2},k}y_{k})$. Thus the first statement of this proposition is proved, and the statements (i)--(ii) are immediately obtained.
	\end{proof}

	\begin{remark}
		In the following analysis, we do not distinguish the parent functions $\phi_p$ and $\check \phi_p$ of a quasiperiodic function $\phi$.
		Furthermore, for the quasiperiodic function $\psi=\phi \varphi$, denote $\psi_p=(\phi \varphi)_p$ as its parent function.
	\end{remark}

	Let $K_T=\{\bx=(x_j)_{j=1}^d\in\bbR^d: \vert x_j\vert \leq T, ~j=1,\cdots,d\}$ be the cube in $\bbR^d$. 
	The mean value $\calM\{\psi(\bx)\}$ of $\psi\in \QP(\bbR^d)$ is defined as
	\begin{align*}
		\calM\{\psi(\bx)\}=\lim_{T\rightarrow +\infty}
		\frac{1}{(2T)^d}\int_{\bs+K_T} \psi(\bx)\,d\bx
		:=\bbint  \psi(\bx)\,d\bx,
	\end{align*}
	where the limit exists uniformly for all $\bs\in\bbR^d$ \cite{Corduneanu1989Almost}. An elementary calculation shows
	\begin{align*}
		\calM\{e^{i\blam\cdot \bx}e^{-i\bm\beta \cdot \bx}\}
		=\begin{cases}
			1, ~~\blam=\bm\beta,\\
			0, ~~ \blam\neq \bm\beta.
		\end{cases}
	\end{align*}
	Correspondingly, the continuous Fourier-Bohr transform of $\psi(\bx)$ is
	\begin{align}
		\hpsi_{\blam}= \calM\{\psi(\bx)e^{-i\blam\cdot  \bx}\},
		\label{eq:transform-FC}
	\end{align}
	where $\blam\in\bbR^d$.
	The Fourier series for $\psi(\bx)$ is
	\begin{align*}
		\psi(\bx)\sim\sum_{j=1}^{\infty} \hpsi_{\blam_j} e^{i\blam_j\cdot \bx},
	\end{align*}
	where $\blam_j\in \sigma(\psi)=\{\blam: \blam = \bP\bk,~\bk\in \bbZ^n \}$ are Fourier exponents and $\hpsi_{\blam_j}$ defined in \eqref{eq:transform-FC} are Fourier coefficients. 
	Furthermore, the Parseval identity of the quasiperiodic function is
	\begin{align}
		\calM \{\vert \psi\vert^2 \}
		=\sum_{\blam\in \sigma(\psi)} \vert \hpsi_{\blam}\vert^2.
		\label{eq:Parseval}
	\end{align}

	\subsection{Function spaces}
	We first introduce the quasiperiodic function spaces on $\bbR^d$.
	\begin{itemize}
		\item {\bf $L^q_{QP}(\bbR^d)$ space}: For any fixed $q\in [1,\infty)$, denote 
		\begin{align*}
			L^q_{QP}(\bbR^d)=\Big \{\psi(\bx)\in\QP(\bbR^d):~\Vert \psi \Vert^q_{q}
			=\bbint \vert \psi(\bx) \vert^q\,d\bx  < \infty \Big \},
		\end{align*}
		and
		\begin{align*}
			L^{\infty}_{QP}(\bbR^d)=\{\psi(\bx)\in\QP(\bbR^d):~\Vert \psi \Vert_{\infty}=\sup_{\bx\in \bbR^d}\vert \psi(\bx) \vert < \infty \}.
		\end{align*}
		In particular, the $\|\cdot\|_2$ norm could be expressed via the inner product $(\cdot, \cdot)_{L_{QP}^2(\bbR^d)}$ with
		\begin{align*}
			(\psi, \varphi)_{L^2_{QP}(\bbR^d)}=\bbint  \psi(\bx)\bar{\varphi}(\bx)\,d\bx.
		\end{align*}
		By applying the Parseval identity \eqref{eq:Parseval}, we have
		\begin{align*}
			\Vert \psi \Vert_{L^2_{QP}(\bbR^d)}^2=\sum_{\blam\in\sigma(\psi)}\vert \hpsi_{\blam} \vert^2.
		\end{align*}
		
		
		\item {\bf $\calC^{\alpha}_{QP}(\bbR^d)$ space:} For $\alpha\in\bbN$, the space $\calC^{\alpha}_{QP}(\bbR^d)$ consists of quasiperiodic functions with continuous derivatives up to order $\alpha$ on $\bbR^d$.
		The $\calC^{\alpha}_{QP}$-norm of $\psi\in \calC^{\alpha}_{QP}(\bbR^d)$ is defined by
		\begin{align*}
			\Vert \psi \Vert_{\calC^{\alpha}_{QP}} =\sum_{\vert \bmm\vert \leq \alpha} \sup_{\bx \in \bbR^d}  \vert \partial_{\bx}^{\bmm} \psi \vert.  
		\end{align*}
		
		\item {\bf $H^{\alpha}_{QP}(\bbR^d)$ space:} For $\alpha\in\bbN$, $H^{\alpha}_{QP}(\bbR^d)$ comprises all quasiperiodic functions with $L^2_{QP}$ partial derivatives up to order $\alpha$. For $\psi,\varphi\in H^{\alpha}_{QP}(\bbR^d)$, the inner product $(\cdot, \cdot)_{H^{\alpha}_{QP}(\bbR^d)}$ is
		\begin{align*}
			(\psi, \varphi)_{H^{\alpha}_{QP}(\bbR^d)}=(\psi, \varphi)_{L^2_{QP}(\bbR^d)}+ \sum_{\vert
				\bmm\vert=\alpha}(\partial^{\bmm}_{\bx} \psi, \partial^{\bmm}_{\bx} \varphi)_{L^2_{QP}(\bbR^d)}.
		\end{align*}
		The corresponding norm is
		\begin{align*}
			\Vert \psi \Vert_{H^{\alpha}_{QP}(\bbR^d)}^2=\sum_{\blam\in\sigma(\psi)}(1+\vert \blam\vert^2)^{\alpha}\vert \hpsi_{\blam} \vert^2,
		\end{align*}
		where $\vert \blam \vert= \sum^d_{j=1} \vert \lambda_j\vert$.
		In particular, for $\alpha=0$, $H^0_{QP}(\bbR^d)=L^2_{QP}(\bbR^d)$.
		To simplify the notations, we denote  $(\cdot, \cdot)=(\cdot, \cdot)_{L^2_{QP}(\bbR^d)}
		$ and $(\cdot, \cdot)_{\alpha}=(\cdot, \cdot)_{H^{\alpha}_{QP}(\bbR^d)}$.
	\end{itemize}
	
	To introduce the periodic function spaces on the $n$-dimensional torus $\bbT^n=\bbR^n/2\pi \bbZ^n$, define the Fourier transform of $U(\by)$  on $\bbT^n$
	\begin{align}
		\hU_{\bk}
		=\frac{1}{\vert \bbT^n\vert}\int_{\bbT^n}e^{-i\bk\cdot \by}U(\by)\,d\by,~~\bk\in\bbZ^n.
		\label{eq:rasiedFC}
	\end{align} 
	
	\begin{itemize}
		\item {\bf $L^2(\bbT^n)$ space:}
		$L^2(\bbT^n)=\Big\{U(\by): \frac{1}{|\bbT^n|}\int_{\bbT^n}\vert U\vert^2\,d\by < +\infty\Big\},$
		equipped with inner product
		\begin{align*}
			(U_1, U_2)_{L^2(\bbT^n)}=\frac{1}{|\bbT^n|}\int_{\bbT^n}U_1\overline{U}_2\,d\by,
		\end{align*}
		and the norm $\|U\|^2:=(U,U)_{L^2(\bbT^n)}$.
		
		
		\item {\bf $X_{\alpha}(\bbT^n)$ space:} For any $U\in L^2(\bbT^n)$ with, the Fourier series expansion 
		\begin{align*}
			U(\by)=\sum_{\bk\in\bbZ^n} \hU_{\bk}e^{i\bk\cdot \by},
		\end{align*}
		the linear operator $(-\Delta)^\alpha$ with $\alpha\in\bbR$ is given by
		\begin{align*}
			(-\Delta)^\alpha U
			=\sum_{\bk\in\bbZ^n} \Vert \bk \Vert^{2\alpha} \hU_{\bk}  e^{i\bk\cdot \by},~~\Vert \bk \Vert^2=\sum_{i=1}^{n} \vert k_i \vert^2,
		\end{align*}
		and the corresponding space $X_{\alpha}(\bbT^n)$ is defined as
		\begin{align*}
			X_{\alpha}(\bbT^n)=
			\Big \{U(\by)=\sum_{\bk\in\bbZ^n} \hU_{\bk}e^{i\bk\cdot \by}\in L^2(\bbT^n):
			\Vert  (-\Delta)^\alpha U \Vert^2
			=\sum_{\bk\in\bbZ^n} \vert \hU_{\bk}\vert^2\cdot
			\Vert \bk \Vert^{4\alpha} <\infty \Big \}.
		\end{align*}
		The $X_{\alpha}(\bbT^n)$ forms a Hilbert space with inner product 
		\begin{align*}
			(U,W)_{X_{\alpha}}=(U,W)_{L^2(\bbT^n)}
			+((-\Delta)^\alpha U,(-\Delta)^\alpha W)_{L^2(\bbT^n)},
		\end{align*}
		the norm and semi-norm with $\alpha\neq 0$ are 
		\begin{align*}
			\Vert U\Vert^2_{X_{\alpha}}=\sum_{\bk\in\bbZ^n} 
			(1+\Vert \bk \Vert^{4\alpha}) \vert \hU_{\bk} \vert^2,~~
			\vert U \vert^2_{X_{\alpha}}=\sum_{\bk\in\bbZ^n} 
			\Vert \bk \Vert^{4\alpha} \vert \hU_{\bk} \vert^2,
		\end{align*}
		where $\Vert \bk\Vert^2=\sum^n_{j=1} \vert k_j\vert^2$. When $\alpha=0$, $\Vert U\Vert_{X_0}=\Vert U\Vert$.
		
		
	\end{itemize}

	\subsection{Basic properties of quasiperiodic functions}
	
	The following lemma states that the quasiperiodic Fourier coefficients $\hpsi_{\bk}$ of \eqref{eq:transform-FC} are equal to their parent Fourier coefficients $\hpsi_{p,\bk}$ of \eqref{eq:rasiedFC}.
	Throughout this work, $C$ denotes a generic positive constant that may be different at different occurrences.
	
	\begin{lemma}
		\cite{Jiang2024Numerical}
		\label{lemma:Birkhoff}
		For a given quasiperiodic function
		\begin{align*}
			\psi(\bx)=\psi_p(\bP^T \bx), ~~\bx\in\bbR^d,
		\end{align*}
		where $\psi_p(\by)$ is its parent function defined on the tours $\bbT^n$, $\bP$ is the projection matrix and $\hpsi_{p,\bk}$ is the continuous Fourier coefficient of $\psi_p$, we have
		\begin{align*}
			\hpsi_{\blam}=\hpsi_{p,\bk},~~\mbox{with}~~\blam =\bP \bk,~~\bk\in\bbZ^n.
		\end{align*}
	\end{lemma}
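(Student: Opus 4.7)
The plan is to compute $\hat\psi_{\blam}$ directly from the Fourier--Bohr transform \eqref{eq:transform-FC} by inserting the Fourier series of the parent function $\psi_p$ on $\bbT^n$, and then invoke the orthogonality identity for $\calM\{e^{i(\blam-\bm\beta)\cdot\bx}\}$ together with the $\bbQ$-linear independence of the columns of $\bP$.

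More precisely, I would first write $\psi_p(\by)=\sum_{\bj\in\bbZ^n}\hat\psi_{p,\bj}e^{i\bj\cdot\by}$ using \eqref{eq:rasiedFC}, and then substitute $\by=\bP^T\bx$ to obtain
\begin{align*}
\psi(\bx)=\sum_{\bj\in\bbZ^n}\hat\psi_{p,\bj}e^{i\bj\cdot \bP^T\bx}=\sum_{\bj\in\bbZ^n}\hat\psi_{p,\bj}e^{i(\bP\bj)\cdot\bx}.
\end{align*}
Multiplying by $e^{-i\blam\cdot\bx}$ with $\blam=\bP\bk$ and applying $\calM\{\cdot\}$, the orthogonality relation stated right before \eqref{eq:transform-FC} gives
\begin{align*}
\hat\psi_{\blam}=\calM\{\psi(\bx)e^{-i\blam\cdot\bx}\}=\sum_{\bj\in\bbZ^n}\hat\psi_{p,\bj}\,\calM\{e^{i(\bP\bj-\bP\bk)\cdot\bx}\}=\sum_{\bj\in\bbZ^n}\hat\psi_{p,\bj}\,\delta_{\bP\bj,\bP\bk}.
\end{align*}
The decisive step is then to notice that $\bP\bj=\bP\bk$ with $\bj,\bk\in\bbZ^n$ forces $\bj=\bk$: indeed, $\bP(\bj-\bk)=\bm 0$ is a $\bbQ$-linear (in fact $\bbZ$-linear) relation among the columns $\bp_1,\dots,\bp_n$, and by definition of $\mathbb P$ these columns are $\bbQ$-linearly independent, so $\bj-\bk=\bm 0$. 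This collapses the sum to the single term $\hat\psi_{p,\bk}$, yielding $\hat\psi_{\blam}=\hat\psi_{p,\bk}$.

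I expect the main subtlety to be the justification of exchanging the infinite Fourier sum with the mean value $\calM\{\cdot\}$. This is where one uses the assumption (implicit in working with quasiperiodic functions whose parent is a bona fide periodic function on $\bbT^n$) that $\psi_p$ has an absolutely or at least uniformly convergent Fourier series on $\bbT^n$ (e.g.\ under the smoothness hypotheses imposed on the potential and solution throughout the paper), so that the partial sums converge uniformly in $\bx$; combined with the fact that $\calM\{\cdot\}$ is a continuous linear functional on the space of uniformly almost periodic functions (the limit defining $\calM$ being uniform in the translation $\bs$), the interchange is justified. The algebraic core of the argument — matching Fourier exponents via the injectivity of $\bk\mapsto \bP\bk$ on $\bbZ^n$ — is immediate from the defining property of $\mathbb P$, and Proposition~\ref{pro:projectmatrix} is not needed here.
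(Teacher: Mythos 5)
The paper itself gives no proof of this lemma; it is imported verbatim from \cite{Jiang2024Numerical}, and the internal label (``Birkhoff'') suggests the source argument runs through Birkhoff's ergodic theorem / Weyl equidistribution of the orbit $\bx\mapsto\bP^T\bx$ on $\bbT^n$, which identifies the spatial mean $\calM\{\psi(\bx)e^{-i\blam\cdot\bx}\}$ with the torus average $\frac{1}{|\bbT^n|}\int_{\bbT^n}\psi_p(\by)e^{-i\bk\cdot\by}\,d\by$ directly, without ever expanding $\psi_p$ in its Fourier series. Your termwise computation is a correct and more elementary alternative: the substitution $e^{i\bj\cdot\bP^T\bx}=e^{i(\bP\bj)\cdot\bx}$, the orthogonality relation for $\calM$, and the injectivity of $\bk\mapsto\bP\bk$ on $\bbZ^n$ (an integer relation $\bP(\bj-\bk)=\bo$ among $\bbQ$-linearly independent columns forces $\bj=\bk$) are all sound. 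The one respect in which your route is genuinely narrower is the interchange of $\calM$ with the infinite sum, which you correctly flag: it requires absolute or uniform convergence of the parent Fourier series. That is available throughout this paper, since one always works with $\psi_p\in X_\alpha$ for $\alpha>n/4$, and Cauchy--Schwarz then gives $\sum_{\bk}|\hpsi_{p,\bk}|\le \Vert\psi_p\Vert_{X_\alpha}\bigl(\sum_{\bk}(1+\Vert\bk\Vert^{4\alpha})^{-1}\bigr)^{1/2}<\infty$; but for a merely $L^2$ or continuous parent function your argument would not close, and the equidistribution argument is the natural repair. In short: correct, a different and more elementary route than the cited source, valid under the paper's standing regularity hypotheses.
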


	Based on the isomorphic relationship between the quasiperiodic function and its parent function, we give the following norm inequalities.
	\begin{lemma}
		\cite{jiang2023High-accuracy}
		\label{lem:normineq}
		For any $\psi\in \QP(\bbR^d)$ and its $n$-dimensional parent function $\psi_p$, the following estimates hold for $\alpha> n/4$ and $\psi_p\in X_{\alpha}$
		\begin{align}
			\Vert \psi\Vert_{L_{QP}^{\infty}(\bbR^d)} \leq C\, \Vert \psi_p\Vert_{X_{\alpha}}
			\label{eq:inftyembedX};~~\Vert \varphi \psi \Vert \leq C \, \Vert \varphi \Vert \cdot\Vert \psi_p\Vert_{X_\alpha},
			~~\varphi \in L_{QP}^{2}(\bbR^d).
		\end{align}
	\end{lemma}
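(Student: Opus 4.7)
The plan is to reduce both inequalities to a single Sobolev-type embedding statement for the parent function $\psi_p$ on $\bbT^n$, and then transfer it back to $\bbR^d$ through the pointwise identity $\psi(\bx)=\psi_p(\bP^T\bx)$.

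\textbf{Step 1: Pointwise control.} First I would use the defining relation $\psi(\bx)=\psi_p(\bP^T\bx)$ to observe that
\begin{equation*}
\Vert \psi\Vert_{L^\infty_{QP}(\bbR^d)}
=\sup_{\bx\in\bbR^d}\bigl|\psi_p(\bP^T\bx)\bigr|
\leq \sup_{\by\in\bbT^n}|\psi_p(\by)|
=\Vert \psi_p\Vert_{L^\infty(\bbT^n)}.
\end{equation*}
This reduces the first inequality to proving the periodic embedding $\Vert \psi_p\Vert_{L^\infty(\bbT^n)}\leq C\Vert\psi_p\Vert_{X_\alpha}$ on the $n$-torus under the hypothesis $\alpha>n/4$.

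\textbf{Step 2: Sobolev embedding on $\bbT^n$.} I would expand $\psi_p$ in its Fourier series $\psi_p(\by)=\sum_{\bk\in\bbZ^n}\hpsi_{p,\bk}e^{i\bk\cdot\by}$, bound it by $\sum_\bk |\hpsi_{p,\bk}|$, and apply Cauchy--Schwarz with the weight $(1+\Vert\bk\Vert^{4\alpha})^{1/2}$:
\begin{equation*}
\sum_{\bk\in\bbZ^n}|\hpsi_{p,\bk}|
\leq \left(\sum_{\bk\in\bbZ^n}(1+\Vert\bk\Vert^{4\alpha})|\hpsi_{p,\bk}|^2\right)^{1/2}
\left(\sum_{\bk\in\bbZ^n}\frac{1}{1+\Vert\bk\Vert^{4\alpha}}\right)^{1/2}.
\end{equation*}
The first factor is exactly $\Vert\psi_p\Vert_{X_\alpha}$. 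For the second factor, a comparison with the integral $\int_{\Vert\bk\Vert\geq 1}\Vert\bk\Vert^{-4\alpha}\,d\bk$ shows convergence precisely when $4\alpha>n$, which is the standing assumption $\alpha>n/4$. Combining with Step 1 yields the first inequality with $C=\bigl(\sum_\bk (1+\Vert\bk\Vert^{4\alpha})^{-1}\bigr)^{1/2}$.

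\textbf{Step 3: Product estimate via Hölder.} For the second inequality I would simply write, using the Parseval-based norm definition and pulling $\psi$ out in $L^\infty$,
\begin{equation*}
\Vert \varphi\psi\Vert^2
=\bbint |\varphi(\bx)|^2|\psi(\bx)|^2\,d\bx
\leq \Vert\psi\Vert_{L^\infty_{QP}(\bbR^d)}^2\,\Vert\varphi\Vert^2,
\end{equation*}
where the mean-value inequality applies because $|\varphi|^2|\psi|^2\leq\Vert\psi\Vert_{L^\infty_{QP}}^2|\varphi|^2$ pointwise and the mean value is monotone. Substituting the bound from Step~2 then gives $\Vert\varphi\psi\Vert\leq C\Vert\varphi\Vert\cdot\Vert\psi_p\Vert_{X_\alpha}$.

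The only nontrivial step is Step 2, and the main subtlety there is verifying the threshold $\alpha>n/4$: one must note that the weight $\Vert\bk\Vert^{4\alpha}$ comes from the \emph{squared} operator $(-\Delta)^\alpha$ used in the definition of $X_\alpha$, so the dual-sum convergence condition is $4\alpha>n$ rather than the more familiar $2\alpha>n$. Everything else is a direct consequence of the Fourier isomorphism given by Lemma~\ref{lemma:Birkhoff} (which ensures $\psi$ and $\psi_p$ share the same coefficient sequence up to the reindexing $\blam=\bP\bk$) and elementary Cauchy--Schwarz/Hölder arguments.
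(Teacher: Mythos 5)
Your proof is correct. Note that the paper itself offers no proof of this lemma --- it is quoted verbatim from the cited reference \cite{jiang2023High-accuracy} --- so there is no in-paper argument to compare against; your route (pointwise transfer $\sup_{\bx}|\psi_p(\bP^T\bx)|\le\Vert\psi_p\Vert_{L^\infty(\bbT^n)}$, then the weighted Cauchy--Schwarz embedding $X_\alpha(\bbT^n)\hookrightarrow L^\infty(\bbT^n)$ under $4\alpha>n$, then H\"older for the product) is the standard one and correctly identifies why the threshold is $\alpha>n/4$ rather than $\alpha>n/2$, namely that the $X_\alpha$ weight is $\Vert\bk\Vert^{4\alpha}$ coming from $(-\Delta)^\alpha$ measured in $L^2$.
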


	

	Next, the maximum nonzero singular value of $\bP$ is used to control the regularity relation between the quasiperiodic function and the corresponding parent function.
	
	\begin{lemma}
		\label{pro:bound-deltav}
		For $\psi\in L^2_{QP}(\bbR^d)$ with the corresponding parent function $\psi_p$ and the projection matrix $\bP$, we have
		\begin{align*}
			\Vert \Delta \psi\Vert \leq \sigma_{max}^2(\bP) \Vert \Delta \psi_p \Vert,
		\end{align*}
		where $\sigma_{max}(\bP)$ is the maximum nonzero singular value of $\bP$.
	\end{lemma}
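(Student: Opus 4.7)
The plan is to pass to Fourier space and exploit the injective correspondence between the Fourier exponents of $\psi$ and those of its parent function $\psi_p$ given by Lemma \ref{lemma:Birkhoff}, and then use a standard singular-value bound for $\bP$ acting on $\bk\in\bbZ^n$.

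First I would compute the Bohr-Fourier series of $\Delta\psi$. Since each exponential $e^{i\blam\cdot\bx}$ is an eigenfunction of $\Delta$ with eigenvalue $-\sum_{j=1}^d \lambda_j^2$, applying Parseval's identity \eqref{eq:Parseval} term-by-term gives
\begin{equation*}
\Vert \Delta\psi\Vert^2 \;=\; \sum_{\blam\in\sigma(\psi)} \Big(\sum_{j=1}^d \lambda_j^2\Big)^{2} \vert \hpsi_{\blam}\vert^2.
\end{equation*}
Likewise, applying Parseval on the torus $\bbT^n$ to $\Delta\psi_p=\sum_{\bk}(-\sum_{i=1}^n k_i^2)\hpsi_{p,\bk}e^{i\bk\cdot\by}$ yields
\begin{equation*}
\Vert \Delta\psi_p\Vert^2 \;=\; \sum_{\bk\in\bbZ^n}\Big(\sum_{i=1}^n k_i^2\Big)^{2}\vert\hpsi_{p,\bk}\vert^2.
\end{equation*}

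Next I would invoke Lemma \ref{lemma:Birkhoff} to rewrite the first sum as a sum over $\bk\in\bbZ^n$ via the bijection $\bk\mapsto\blam=\bP\bk$; this map is one-to-one because the columns of $\bP$ are $\bbQ$-linearly independent, so $\bP\bk_1=\bP\bk_2$ with $\bk_i\in\bbZ^n$ forces $\bk_1=\bk_2$, and no two distinct integer vectors collapse to the same exponent. Substituting $\hpsi_{\blam}=\hpsi_{p,\bk}$ gives
\begin{equation*}
\Vert\Delta\psi\Vert^2 \;=\; \sum_{\bk\in\bbZ^n}\vert\bP\bk\vert_2^{4}\,\vert\hpsi_{p,\bk}\vert^2,
\end{equation*}
where $\vert\cdot\vert_2$ is the Euclidean norm.

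The final step is the singular-value estimate. For any $\bk\in\bbZ^n$, the SVD of $\bP$ yields $\vert\bP\bk\vert_2\leq \sigma_{\max}(\bP)\vert\bk\vert_2$, hence $\vert\bP\bk\vert_2^{4}\leq \sigma_{\max}^{4}(\bP)\,\vert\bk\vert_2^{4}$. Plugging this into the identity above gives
\begin{equation*}
\Vert\Delta\psi\Vert^2 \;\leq\; \sigma_{\max}^{4}(\bP)\sum_{\bk\in\bbZ^n}\vert\bk\vert_2^{4}\vert\hpsi_{p,\bk}\vert^2 \;=\; \sigma_{\max}^{4}(\bP)\,\Vert\Delta\psi_p\Vert^2,
\end{equation*}
and taking square roots produces the desired bound. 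There is no real obstacle here; the only points that warrant care are (a) checking that the one-dimensional Fourier multiplier picked up by $\Delta$ in the quasiperiodic variable $\bx$ is $-\vert\bP\bk\vert_2^2$ rather than $-\vert\bk\vert_2^2$, which follows from the chain rule $\partial_{x_j}=\sum_i P_{ji}\partial_{y_i}$ applied to $e^{i\bk\cdot\bP^T\bx}$, and (b) the injectivity remark above so that the two Parseval identities are indeed comparable term-by-term.
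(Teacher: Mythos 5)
Your proof is correct and follows essentially the same route as the paper's: Parseval for $\Delta\psi$, the identification $\blam=\bP\bk$ with $\hpsi_{\blam}=\hpsi_{p,\bk}$ from Lemma \ref{lemma:Birkhoff}, the singular-value bound $\Vert\bP\bk\Vert_2\leq\sigma_{\max}(\bP)\Vert\bk\Vert_2$, and Parseval for $\Delta\psi_p$. Your version is in fact slightly more careful: you make the injectivity of $\bk\mapsto\bP\bk$ explicit and carry the correct fourth power $\Vert\bk\Vert_2^4$ throughout, whereas the paper's displayed chain contains a typographical $\Vert\bk\Vert_2^2$ at the corresponding step.
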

	
	\begin{proof}
		For $\psi(\bx)=\sum_{\blam\in \sigma(\psi)} \hpsi_{\blam} e^{i \blam \cdot \bx}$,
		we have
		\begin{align*}
			\Vert \Delta \psi\Vert^2 
			=\Big \Vert \sum_{\blam\in \sigma(\psi)} \Vert \blam \Vert^2_2 \,\hpsi_{\blam} e^{i \blam \cdot \bx} \Big \Vert^2
			&= \sum_{\blam\in \sigma(\psi)}  \Vert \bP\bk \Vert_2^4 \Vert \hpsi_{\blam} \Vert^2\\
			&\leq \sigma_{max}^4 (\bP)\sum_{\blam\in \sigma(\psi)}  \Vert \bk \Vert_2^2 \Vert \hpsi_{\blam} \Vert^2 \\
			&= \sigma_{max}^4 (\bP) \Vert \Delta \psi_p\Vert^2.
		\end{align*}
	\end{proof}
	
	\begin{lemma} \cite{jiang2023High-accuracy}
		\label{thm:selfadjoint}
		The operator $\Delta$ is self-adjoint in the $L^2_{QP}$ inner product.
	\end{lemma}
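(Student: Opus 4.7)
The plan is to verify $(\Delta\psi,\varphi)=(\psi,\Delta\varphi)$ for $\psi,\varphi\in H^2_{QP}(\bbR^d)$ by reducing the identity to a bilinear sum over Fourier exponents, using the Parseval-type orthogonality of the mean-value functional $\calM$.

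First I would homogenize the Fourier supports. By Proposition \ref{pro:projectmatrix}, any pair $\psi,\varphi\in\QP(\bbR^d)$ admits a common projection matrix $\bP\in\bbR^{d\times n}$ with parent functions on $\bbT^n$, so I may write
\begin{align*}
	\psi(\bx)=\sum_{\bk\in\bbZ^n}\hpsi_{\bP\bk}\,e^{i(\bP\bk)\cdot\bx},\qquad \varphi(\bx)=\sum_{\bk\in\bbZ^n}\hatvarphi_{\bP\bk}\,e^{i(\bP\bk)\cdot\bx},
\end{align*}
with the Fourier-Bohr coefficients identified with the parent Fourier coefficients by Lemma \ref{lemma:Birkhoff}. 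The $H^2_{QP}$ assumption together with the norm expression $\|\psi\|_{H^\alpha_{QP}}^2=\sum_\blam(1+|\blam|^2)^\alpha|\hpsi_\blam|^2$ guarantees that $\Delta$ may be applied termwise, giving the convergent series
\begin{align*}
	\Delta\psi(\bx)=-\sum_{\bk\in\bbZ^n}\|\bP\bk\|_2^{2}\,\hpsi_{\bP\bk}\,e^{i(\bP\bk)\cdot\bx}\in L^2_{QP}(\bbR^d),
\end{align*}
and similarly for $\Delta\varphi$.

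Next I would compute $(\Delta\psi,\varphi)$ directly. Multiplying the series for $\Delta\psi$ by the conjugate series for $\varphi$ and applying $\calM$ term by term, the orthogonality relation $\calM\{e^{i(\blam-\bm\beta)\cdot\bx}\}=\delta_{\blam,\bm\beta}$ collapses the double sum to the diagonal, yielding
\begin{align*}
	(\Delta\psi,\varphi)=-\sum_{\bk\in\bbZ^n}\|\bP\bk\|_2^{2}\,\hpsi_{\bP\bk}\,\overline{\hatvarphi_{\bP\bk}}.
\end{align*}
Since $\|\bP\bk\|_2^2$ is real, the same manipulation applied to $(\psi,\Delta\varphi)$ produces the identical right-hand side, which completes the proof.

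The main delicate point is the termwise application of $\Delta$ and the interchange of $\calM$ with the double summation; both are justified by absolute convergence, which follows from Cauchy-Schwarz in $\ell^2$ together with the finiteness of $\|\psi\|_{H^2_{QP}}$ and $\|\varphi\|_{L^2_{QP}}$ (and symmetrically for the other inner product). Everything else is a direct bookkeeping exercise using the orthogonality of the exponentials under $\calM$.
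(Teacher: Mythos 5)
The paper does not prove this lemma; it is imported verbatim from \cite{jiang2023High-accuracy}, so there is no in-paper argument to compare against. Your proof is correct and is the standard one: reduce to a common projection matrix via Proposition \ref{pro:projectmatrix}, expand both functions in Fourier--Bohr series, use $\calM\{e^{i(\blam-\bm\beta)\cdot\bx}\}=\delta_{\blam,\bm\beta}$ to diagonalize, and observe that the multiplier $-\Vert\bP\bk\Vert_2^2$ is real; the Cauchy--Schwarz justification of the interchange of $\calM$ with the sum is the right one, and the identification of $\Delta\psi$ with the termwise-differentiated series is essentially definitional given how the paper defines $H^{\alpha}_{QP}$ through Fourier coefficients. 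The only alternative route worth mentioning is to work directly from the mean-value definition, integrating by parts twice over the expanding cubes $K_T$ and showing the boundary terms vanish after division by $(2T)^d$; your spectral argument avoids that boundary bookkeeping entirely and is cleaner.
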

	
	\section{Numerical scheme and error estimate}
	\label{sec:NA}
	
	We apply the Strang splitting method in time and the PM in space to construct the fully discrete scheme of the NQSE \eqref{eq:QSE}. We then present the main result of the convergence analysis.

	\subsection{Strang splitting method}
	First, we divide NQSE \eqref{eq:QSE} into two subproblems.
	The first subproblem is
	\begin{align*}
		i\frac{\partial \phi(\bx,t)}{\partial t}=-\Delta \phi(\bx,t),~~\phi_0=\phi(\bx, 0),~~0\leq t\leq T,
	\end{align*}
	with the solution
	$\phi(\bx,t)=e^{it\Delta}\phi_0=\calF(t)\phi_0$.
	The second subproblem is 
	\begin{align}
		i\frac{\partial \varphi(\bx,t)}{\partial t}=B(\varphi(\bx,t)) \varphi(\bx,t),~~\varphi_0=\varphi(\bx, 0),~~0\leq t\leq T.
		\label{eq:secondproblem}
	\end{align}
	where $B(\varphi)=V(\bx)+\theta\vert \varphi\vert^2$. Since the potential function $V(\bx)$ is real-valued, we have
	\begin{align*}
		i \varphi_t(\bx,t) \bar{\varphi}(\bx,t)=V(\bx) \vert \varphi(\bx,t)\vert^2+\theta \vert \varphi(\bx,t)\vert^4,
	\end{align*}
	and
	\begin{align*}
		-i\bar{\varphi}_t(\bx,t)\varphi(\bx,t)=V(\bx) \vert \varphi(\bx,t)\vert^2+\theta \vert \varphi(\bx,t)\vert^4.
	\end{align*}
	Thus, $\varphi_t(\bx,t) \bar{\varphi}(\bx,t)=-\bar{\varphi}_t(\bx,t)\varphi(\bx,t),$
	\textit{i.e.,}
	$\partial_t \vert \varphi(\bx,t)\vert^2= 2\Re (\varphi_t \bar{\varphi})=0$, which means 
	$B(\varphi(\bx,t))=B(\varphi_0).$ Therefore, we can obtain the analytical solution of \eqref{eq:secondproblem}
	\begin{align*}
		\varphi(\bx, t)=\calS(t)\varphi_0:=e^{-itB(\varphi_0)}\varphi_0,~~0\leq t\leq T.
	\end{align*}
	
	For an integer $M>0$, set the time size $\tau=T/M$ and $t_m=m\tau$ for $1\leq m\leq M$, then the strang splitting method for the NQSE (\ref{eq:QSE}) at time $t_m$ is
	\begin{align}
		\psi_m=\calF(\tau/2) \circ \calS(\tau) \circ \calF(\tau/2)\psi_{m-1}
		&=e^{i\frac{\tau}{2}\Delta}e^{-i\tau B(\varphi_{0,m-1})}e^{i\frac{\tau}{2}\Delta}\psi_{m-1}\notag \\
		&=\Pi_{j=1}^{m} e^{i\frac{\tau}{2}\Delta}e^{-i\tau B(\varphi_{0,j-1})}e^{i\frac{\tau}{2}\Delta}\psi_{0}=\Pi_{j=1}^{m}\Gamma_{0,j-1} \psi_0,\label{z1}
	\end{align}
	where $1\leq m\leq M$, $\psi_0=\psi(\bx,0)$ and $\varphi_{0,j-1}=e^{i\frac{\tau}{2}\Delta}\psi_{j-1}$.

	\subsection{Full discrete scheme}
	
	
	Suppose the solution $\psi\in \QP(\bbR^d)$ with the corresponding projection matrix $\bP\in\mathbb P^{d\times n}$. Then we denote $K_N^n=\{\bk=(k_j)_{j=1}^n \in\bbZ^n: \, -N \leq  k_j < N\} $ for some  integer $0<N\in \bbN$ and
	$\sigma_{N}(\psi)=\{\blam = \bP\bk: \bk\in K_N^n \}.
	$
	The order of the set $\sigma_{N}(\psi)$ is $\# (\sigma_{N}(\psi))=(2N)^n$.
	
	Firstly, we discretize the tours $\bbT^n$.
	Without loss of generality, we consider a fundamental domain $[0,2\pi)^n$ and assume the discrete nodes in each dimension are
	the same, \textit{i.e.}, $N_1=N_2=\cdots=N_n=2N$. 
	The spatial discrete size $h=\pi/N$. 
	The spatial variables are evaluated on the standard
	numerical grid $\bbT^n_N$ with grid points $\by_{\bj} =(y_{1,j_1},
	y_{2,j_2},\dots, y_{n,j_n})$, $y_{1,j_1}=j_1 h$, $y_{2,j_2}=j_2 h, \dots,
	y_{n,j_n}=j_n h$, $0\leq j_1,j_2,\dots, j_n < 2N$.
	In PM, the trigonometric interpolation of the quasiperiodic function $\psi$ is
	\begin{align*}
		I_N \psi=\sum_{\blam_{\bk}\in \sigma_{N}(\psi)} \tpsi_{\bk}e^{i\blam_{\bk}\cdot \bx},
	\end{align*}
	where $\tpsi_{\bk}$ can be obtained by $n$-dimensional FFT of the parent function $\psi_p$, \textit{i.e.,}
	\begin{align*}
		\tpsi_{\bk}=\frac{1}{(2N)^n}\sum_{\by_{\bj}\in \bbT^n_N}
		\psi_p(\by_{\bj})e^{-i\bk\cdot \by_{\bj}}.
	\end{align*}
	Based on the semi-discrete scheme (\ref{z1}), we apply the PM in space and use $I_Ne^{i\frac{\tau}{2}\Delta}I_N \psi=e^{i\frac{\tau}{2}\Delta}I_N \psi$ to get the full discrete scheme 
	\begin{align}
		\Psi_{m}
		&=\calF(\tau/2)I_N \circ \calS(\tau) \circ \calF(\tau/2) I_N \Psi_{m-1}\notag\\
		&=e^{i\frac{\tau}{2}\Delta}I_Ne^{-i\tau B(\varphi_{N,0,m-1})}e^{i\frac{\tau}{2}\Delta}I_N\Psi_{m-1} \notag\\
		&=\Pi_{j=1}^{m} e^{i\frac{\tau}{2}\Delta}I_Ne^{-i\tau  B(\varphi_{N,0,j-1})}e^{i\frac{\tau}{2}\Delta}I_N\psi_{0}\notag \\
		&=\Pi_{j=1}^{m} \Upsilon_{0,j-1}\Psi_0,
		\label{eq:splitting-PM}
	\end{align}
	where $\varphi_{N,0,j-1}=e^{i\frac{\tau}{2}\Delta}I_N\Psi_{j-1}$ and $\Psi_0=I_N\psi_0$.
	
	The detailed algorithm to compute $\Psi_{m+1}$ from $\ds \Psi_{m}=\sum_{\blam\in\sigma_N(u)} \tPsi_{\blam}^m e^{i\blam\cdot \bx}$ for some $0<m<M$ contains three steps:
	
	$\bullet$ \textbf{Step 1.} For $t\in [t_m, t_m+\tau/2]$, we have
	\begin{align*}
		\phi(\bx,t_m)
		=e^{\frac{i}{2}\tau \Delta} \Psi_m
		=\sum_{\blam\in\sigma_N(\psi)} \tpsi_{\blam}^m e^{\frac{i}{2}\tau \Vert \blam\Vert^2}e^{i\blam\cdot \bx}.
	\end{align*}
	Then we denote $\tilde{\phi}_{\bk}^m:=\tpsi_{\blam}^m e^{-\frac{i}{2}\tau \Vert \blam\Vert^2}$ with $\blam=\bP\bk$.
	
	$\bullet$ \textbf{Step 2.} Applying inverse FFT yields
	\begin{align*}
		I_N\phi_p(\by_j,t_m)=\sum_{\bk\in K^n_N} \tilde{\phi}_{\bk}^m e^{i\bk\cdot\by_j},
	\end{align*}
	where the grid points $\by_j\in \bbT^n_N$.
	For $t\in [t_m, t_{m+1}]$, we have
	\begin{align*}
		\varphi_p(\by,t_{m})=e^{-i\tau (V_p+\alpha \vert \phi^*_{m,p}\vert^2 )} I_N \phi_p(\by, t_m),
	\end{align*}
	where $V_p$ is the parent function of $V$ and $\phi^*_{m,p}=I_N \phi_p(\by, t_m)$. Using FFT again, we have $\tilde{\varphi}_{\blam}^m=\langle \varphi_p(\by_j,t_{m}), e^{i \bk\cdot \by_j} \rangle_N$ with $\blam=\bP\bk$.

	$\bullet$ \textbf{Step 3.} For $t\in [t_m, t_m+\tau/2]$, similar to the Step 1, we have
	\begin{align*}
		\Psi_{m+1}= e^{\frac{i}{2}\tau \Delta} I_N \varphi(\bx, t_m)
		=\sum_{\blam\in\sigma_N(\psi)} \tilde{\varphi}_{\blam}^m
		e^{\frac{i}{2}\tau \Vert\blam \Vert^2}
		e^{i\blam\cdot \bx}.
	\end{align*}

	\subsection{Main result}
	We give the main theorem of the convergence analysis of the fully discrete scheme (\ref{eq:splitting-PM}). For simplicity, we denote $\psi(t):=\psi(\bx,t)$ in the rest of the work.
	
	\begin{thm}
		\label{thm:PMerror}
		Assume that the potential $V$ is a $\calC_{QP}^1$-smooth function with $\Vert V_p\Vert_{X_\alpha}\leq C_V$ and $\psi_p(\cdot,t)\in X_\alpha$ for $0\leq t\leq T$ and for some integer $\alpha> \max\{4,n/4\}$ with $\sup\{\Vert \psi_p(\cdot, t) \Vert_{X_{\alpha}}: 0\leq t \leq T \}\leq C_p$, then the error bound of  the fully discrete scheme \eqref{eq:splitting-PM} is	
		\begin{align*}
			\Vert \Psi_m-\psi(\cdot,t_m)\Vert\leq C(\tau^2 + N^{-\alpha}),~~0\leq m\leq M,
		\end{align*}
		where the constant $C>0$ depends on $C_V$, $C_p$, $d$, $\alpha$ and $T$.
		
	\end{thm}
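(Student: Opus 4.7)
The plan is to introduce the exact semi-discrete Strang solution $\psi_m$ from \eqref{z1} as an intermediate quantity and apply the triangle inequality
$$\|\Psi_m - \psi(\cdot,t_m)\| \le \|\Psi_m - \psi_m\| + \|\psi_m - \psi(\cdot,t_m)\|.$$
The second term is the purely temporal Strang-splitting error, while the first captures the additional error introduced by the spatial projection $I_N$. The two terms are analyzed separately and then combined through an induction on $m$ that simultaneously maintains a uniform bound on the parent-function $X_\alpha$-norm of the numerical iterates.

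For the splitting error I would use the classical commutator-based local error analysis. Expanding the exact flow of \eqref{eq:QSE} and the Strang flow $\calF(\tau/2)\circ\calS(\tau)\circ\calF(\tau/2)$ by variation of constants around $t_{m-1}$ and matching Taylor terms, the local truncation error is expressed through iterated commutators of $-\Delta$ and $B(\psi)=V+\theta|\psi|^2$ acting on $\psi$. Two features specific to the quasiperiodic setting must be handled: first, $L^\infty_{QP}$ bounds on $\psi$ and its derivatives cannot be obtained by Sobolev embedding, so I would route everything through the parent function by means of Lemmas \ref{lem:normineq} and \ref{pro:bound-deltav}; second, the products $V\psi$, $|\psi|^2\psi$ and $V|\psi|^2\psi$ entering the commutators live on a higher-dimensional parent torus, which is precisely where Proposition \ref{pro:projectmatrix} is invoked to lift all quantities to one common torus, after which the Banach-algebra property of $X_\alpha$ for $\alpha>n/4$ bounds the parent-norm products by polynomials in $C_V$ and $C_p$. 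The hypothesis $\alpha>4$ enters because the Strang local error involves up to four Laplacians applied to $\psi_p$. Summing the resulting $O(\tau^3)$ local errors over $M=T/\tau$ steps, with the $L^2_{QP}$-isometry of $\calF$ inherited from Lemma \ref{thm:selfadjoint}, produces the global $O(\tau^2)$ bound.

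For the projection error I would propagate the one-step difference $\Psi_m-\psi_m = \Upsilon_{0,m-1}\Psi_{m-1} - \Gamma_{0,m-1}\psi_{m-1}$ by inserting $\Upsilon_{0,m-1}\psi_{m-1}$ and $\Gamma_{0,m-1}\psi_{m-1}$ and splitting into three pieces: the spectral truncation errors $(I-I_N)$ applied to $\calF(\tau/2)\psi_{m-1}$ and to $e^{-i\tau B}\calF(\tau/2)\psi_{m-1}$; the nonlinear local-Lipschitz discrepancy $\|e^{-i\tau B(\varphi_1)}\varphi_1-e^{-i\tau B(\varphi_2)}\varphi_2\|$; and the isometric propagation of $\Psi_{m-1}-\psi_{m-1}$ under the free Schr\"odinger flow. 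Lemma \ref{thm:selfadjoint} ensures that $\calF$ contributes no amplification. The truncation pieces produce $O(N^{-\alpha})$ via Lemma \ref{lemma:Birkhoff} and the tail estimate $\sum_{\|\bk\|>N}|\hpsi_{p,\bk}|^2 \le N^{-2\alpha}\|\psi_p\|_{X_\alpha}^2$, while the Lipschitz piece contributes a $1+C\tau$ factor per step as long as the $L^\infty_{QP}$-norms of the iterates remain bounded. A discrete Gronwall argument then delivers $\|\Psi_m-\psi_m\|\le Ce^{CT}N^{-\alpha}$.

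The principal obstacle is precisely this uniform $L^\infty_{QP}$-control of $\Psi_m$ for all $m$: the nonlinear map $\varphi \mapsto e^{-i\tau B(\varphi)}\varphi$ is not globally Lipschitz on $L^2_{QP}$, and no Sobolev embedding is available in the quasiperiodic setting. I would close this by an inductive bootstrap: assume that the parent of $\Psi_{m-1}$ satisfies $\|\Psi_{m-1,p}\|_{X_\alpha}\le 2C_p$, use the auxiliary high-dimensional periodic system \eqref{eq:QSE-parent} to lift the numerical iteration to a parent iteration and derive a one-step estimate $\|\Psi_m-\psi(\cdot,t_m)\|\le C(\tau^2+N^{-\alpha})$ whose constant depends only on $C_V, C_p, d, \alpha, T$, and then conclude via Lemma \ref{lem:normineq} that $\|\Psi_{m,p}\|_{X_\alpha}\le 2C_p$ for $\tau$ and $N^{-1}$ sufficiently small. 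The delicate point is that the constant in the error bound must be independent of any a priori control on $\|\Psi_{m,p}\|_{X_\alpha}$ beyond what the induction hypothesis already provides; the parent-function framework of Section \ref{sec2}, together with the common-torus lifting given by Proposition \ref{pro:projectmatrix}, is exactly what breaks this circularity and makes the inductive step go through.
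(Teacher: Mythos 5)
Your overall architecture coincides with the paper's: the same intermediate quantity $\psi_m$ from \eqref{z1}, the same triangle-inequality split into a temporal Strang error and a spatial projection error, a Lady Windermere / local-error-plus-stability treatment of each piece (Theorems \ref{thm:time-error-part}, \ref{thm:phierror} and \ref{thm:space-error-part} in the paper), and the use of the auxiliary periodic system \eqref{eq:QSE-parent} for regularity control of the iterates. Up to that point the proposal is sound and matches the paper.

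The gap is in how you close the regularity control, which you yourself flag as the principal obstacle. First, your inductive step ends with ``conclude via Lemma \ref{lem:normineq} that $\Vert \Psi_{m,p}\Vert_{X_\alpha}\le 2C_p$ for $\tau$ and $N^{-1}$ sufficiently small,'' but Lemma \ref{lem:normineq} goes the wrong way: it bounds $L^\infty_{QP}$ by the parent $X_\alpha$-norm; it does not let you upgrade the $L^2_{QP}$ error bound $\Vert\Psi_m-\psi(\cdot,t_m)\Vert\le C(\tau^2+N^{-\alpha})$ to an $X_\alpha$ bound on $\Psi_{m,p}$. The only generic way to make that upgrade is an inverse inequality on trigonometric polynomials of degree $N$, which costs a factor of order $N^{2\alpha}$ and yields $N^{2\alpha}\tau^2+N^{\alpha}$ --- not small --- so the bootstrap as stated does not close. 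The paper avoids the bootstrap entirely: Lemma \ref{lem:eq-inter-values} imports the a priori $X_\alpha$ bounds from the known periodic theory (Theorems 3.1 and 3.4 of \cite{Gauckler2010Convergence}) applied to \eqref{eq:QSE-parent}, with no smallness condition and no induction on the error. Second, ``lift the numerical iteration to a parent iteration'' hides a real obstruction: the parent of the quasiperiodic Strang iterate is \emph{not} the periodic Strang iterate of the parent, because the free flow multiplies Fourier coefficients by $e^{i\tau\Vert\bP\bk\Vert^2/2}$ on the quasiperiodic side but by $e^{i\tau\Vert\bk\Vert^2/2}$ on the torus. The paper's Lemma \ref{lem:normeq} (invariance of the relevant norms under such phase changes), combined with Lemma \ref{lem:vp-norm} to establish the identity \eqref{eq:Xalpha}, is precisely what reconciles the two iterations; without an argument of this kind your transfer of the periodic a priori bounds to the quasiperiodic iterates is unjustified.
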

	\begin{proof}
		We split the error as $\Psi_m-\psi(t_m)=\Psi_m-\psi_m + \psi_m-\psi(t_m)$, and according to the Lady Windermere's fan argument, we have
		\begin{align*}
			\psi_m-\psi(t_m)
			&=\Pi_{j=1}^{m}\Gamma_{0,j-1} \psi_0-\psi(t_m)\\
			&=\Pi_{j=2}^{m}\Gamma_{0,j-1} \Gamma_{0,0}\psi(t_0)-\Pi_{j=2}^{m}\Gamma_{1,j-1} \psi(t_1)\\
			&~~~+\Pi_{j=3}^{m}\Gamma_{1,j-1} \Gamma_{1,1}\psi(t_1)-\Pi_{j=3}^{m}\Gamma_{2,j-1} \psi(t_2)\\
			&~~~~~~~~~\vdots \\
			&~~~+\Gamma_{m-1,m-1}\psi(t_{m-1})-\psi(t_m)\\
			&=\sum_{\ell=1}^{m} \prod_{j=\ell+1}^{m} ( \Gamma_{\ell-1, j-1}\Gamma_{\ell-1,\ell-1} \psi(t_{\ell-1})-\Gamma_{\ell, j-1}\psi(t_{\ell})),
		\end{align*}
		and
		\begin{align*}
			\Psi_m-\psi_m
			&=\Pi_{j=1}^{m} \Upsilon_{0,j-1}\Psi_0-\Pi_{j=1}^{m}\Gamma_{0,j-1} \psi_0\\
			&=\Pi_{j=2}^{m} \Upsilon_{0,j-1}\Upsilon_{0,0}\psi_0
			-\Pi_{j=2}^{m} \Upsilon_{1,j-1}I_N\Gamma_{0,0}\psi_0\\
			&~~~+\Pi_{j=3}^{m} \Upsilon_{1,j-1} \Upsilon_{1,1} \psi_1-\Pi_{j=3}^{m} \Upsilon_{2,j-1}I_N\Gamma_{1,1}\psi_1\\
			&~~~~~~~~~\vdots \\
			&~~~+\Upsilon_{m-1,m-1}\psi_{m-1}- I_N\Gamma_{m-1,m-1}\psi_{m-1}+I_N\psi_m-\psi_m\\
			&=I_N\psi_m-\psi_m
			+\sum_{\ell=1}^{m} \prod_{j=\ell+1}^{m}(\Upsilon_{\ell-1,j-1} \Upsilon_{\ell-1,\ell-1}\psi_{\ell-1}-\Upsilon_{\ell,j-1} I_N\Gamma_{\ell-1,\ell-1}\psi_{\ell-1}).
		\end{align*}
		
		In order to bound the right-hand side terms of the above two equations, several auxiliary results are needed. For $\phi^* = e^{i\frac{\tau}{2}\Delta } \phi$ and $\varphi^* = e^{i\frac{\tau}{2}\Delta } \varphi$,
		denote
		\begin{align*}
			\Gamma_\phi \phi=e^{i\frac{\tau}{2}\Delta } e^{-i\tau (V+\theta \vert \phi^*\vert^2)}
			e^{i\frac{\tau}{2}\Delta }\phi,~~
			\Gamma_\varphi \varphi=e^{i\frac{\tau}{2}\Delta } e^{-i\tau (V+\theta \vert \varphi^*\vert^2)}
			e^{i\frac{\tau}{2}\Delta} \varphi.
		\end{align*}
		For $\phi_N^{**}=e^{\frac{i}{2}\tau \Delta} I_N\phi$ and $\varphi_N^{**}=e^{\frac{i}{2}\tau \Delta} I_N\varphi$,
		denote
		\begin{align*}
			\Upsilon_\phi \phi=e^{i\frac{\tau}{2}\Delta }I_N e^{-i\tau (V+\theta \vert \phi_N^{**}\vert^2)}
			e^{i\frac{\tau}{2}\Delta}I_N \phi,~~
			\Upsilon_\varphi \varphi=e^{i\frac{\tau}{2}\Delta }I_N e^{-i\tau (V+\theta \vert \varphi_N^{**}\vert^2)}
			e^{i\frac{\tau}{2}\Delta}I_N \varphi.
		\end{align*} 
		Then, in order to estimate $\Vert \psi_m-\psi(t_m)\Vert $, we need the upper bounds for $\Vert \Gamma_\phi \phi- \Gamma_\psi \psi\Vert$ (Theorem \ref{thm:time-error-part} (i)) and $ \Vert \Gamma_{\ell-1,\ell-1} \psi(t_{\ell-1})-\psi(t_{\ell})\Vert $ (Theorem \ref{thm:time-error-part} (ii)). To estimate $\Vert \Psi_m-\psi_m\Vert $, 
		we need the upper bounds for $\Vert \Upsilon_\phi \phi- \Upsilon_\psi \psi\Vert$ (Theorem \ref{thm:phierror}) and $ \Vert \Upsilon_{\ell-1,\ell-1}\psi_{\ell-1} -I_N\Gamma_{\ell-1,\ell-1}\psi_{\ell-1}\Vert$ (Theorem \ref{thm:space-error-part}). Furthermore, the analysis of the operator splitting method requires estimates of intermediate solutions, which are given in  Lemma \ref{lem:eq-inter-values}. With the help of these auxiliary results, we have
		\begin{align*}
			\Vert \Psi_m- \psi(\cdot,t_m)\Vert
			&\leq \Vert \psi_m-\psi(t_m)\Vert+ \Vert  \Psi-\psi_m \Vert\\
			&\hspace{-0.9in}\leq   \sum_{\ell=1}^{m} \Big \Vert \prod_{j=\ell+1}^{m} ( \Gamma_{\ell-1, j-1}\Gamma_{\ell-1,\ell-1} \psi(t_{\ell-1})-\Gamma_{\ell, j-1}\psi(t_{\ell})) \Big \Vert\\
			&~~\hspace{-0.8in}+ \Big \Vert 
			I_N\psi_m-\psi_m \Big \Vert 
			+\sum_{\ell=1}^{m} \Big \Vert  \prod_{j=\ell+1}^{m}(\Upsilon_{\ell-1,j-1} \Upsilon_{\ell-1,\ell-1}\psi_{\ell-1}-\Upsilon_{\ell,j-1} I_N\Gamma_{\ell-1,\ell-1}\psi_{\ell-1}) \Big \Vert\\
			&\hspace{-0.9in}\leq \sum_{\ell=1}^{m} e^{C(C_V+\vert \theta \vert +C_p^2)(m-\ell)\tau}\Vert \Upsilon_{\ell-1,\ell-1}\psi_{\ell-1}-\psi(t_\ell)\Vert
			~~(\mbox{Theorem \ref{thm:time-error-part} (i)})\\
			&~~\hspace{-0.8in}+C N^{-\alpha} \vert\psi_{m,p}\vert_{X_{\alpha}}+\sum_{\ell=1}^{m} e^{C(C_V+\vert \theta \vert C_p^2)(m-\ell)\tau}\Vert \Upsilon_{\ell-1,\ell-1}\psi_{\ell-1}-I_N\Gamma_{\ell-1,\ell-1}\psi_{\ell-1}\Vert~
			(\mbox{Theorem \ref{thm:phierror}})\\
			&\hspace{-0.9in}\leq C\sum_{\ell=1}^{m} e^{C(C_V+\vert \theta \vert C_p^2)(m-\ell)\tau}\tau^3 ~~(\mbox{Theorem \ref{thm:time-error-part} (ii)})\\
			&~~\hspace{-0.8in}+C N^{-\alpha} \vert \psi_{m,p}\vert_{X_{\alpha}}+ C \sum_{\ell=1}^{m} e^{C(C_V+\vert \theta \vert C_p^2)(m-\ell)\tau}\tau N^{-\alpha} \vert \psi_{\ell-1,p}\vert_{X_{\alpha}}
			~(\mbox{Theorem \ref{thm:space-error-part}})\\
			&\hspace{-0.9in}\leq C(\tau^2+N^{-\alpha})~~(\mbox{Note that $m\tau\leq T$}),
		\end{align*}
		which completes the proof.
	\end{proof}


	\section{Auxiliary estimates}
	\label{sec:analysis}
	
	\subsection{Estimates in time}

	The follow lemma introduces some bounds related to the nonlinear operator $B(\varphi)$.
	
	\begin{lemma}
		\label{lemma:B-pro}
		For $\phi,\varphi,\psi\in \QP(\bbR^d)$ and corresponding parent functions $\phi_p, \varphi_p$ and $\psi_p$, respectively, the following bounds hold for $\alpha>n/4$:
		
		(i) $\Vert B(\phi) \varphi\Vert  \leq C(\Vert V_p \Vert_{X_{\alpha}}
		+\vert \theta\vert \Vert \phi_p\Vert^2_{X_{\alpha}} ) \Vert \varphi\Vert$;
		
		(ii) $\Vert (B(\phi)- B(\psi))\varphi\Vert 
		\leq C\vert \theta\vert (\Vert \phi_p \Vert_{X_{\alpha}}
		+ \Vert \psi_p\Vert_{X_{\alpha}}) \Vert \varphi_p\Vert_{X_{\alpha}}
		\Vert \phi-\psi\Vert$.
	\end{lemma}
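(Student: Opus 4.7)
The plan is to bound both quantities by decomposing $B$ into its linear potential part $V$ and its cubic nonlinear part $\theta|\cdot|^2$, and then to reduce each piece to the multiplicative inequality in Lemma \ref{lem:normineq} together with the fact that, for $\alpha > n/4$, the space $X_{\alpha}(\bbT^n)$ is an algebra, i.e., $\Vert U W\Vert_{X_{\alpha}} \leq C \Vert U\Vert_{X_{\alpha}} \Vert W\Vert_{X_{\alpha}}$. Throughout, parent functions of products are identified via Proposition \ref{pro:projectmatrix}, so that $(|\phi|^2)_p = |\phi_p|^2$, $(\phi\varphi)_p = \phi_p \varphi_p$, and so on.

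For part (i), I would split $B(\phi)\varphi = V\varphi + \theta|\phi|^2\varphi$ and bound each term separately. The potential term is handled directly by Lemma \ref{lem:normineq}, giving $\Vert V \varphi\Vert \leq C \Vert V_p\Vert_{X_{\alpha}} \Vert \varphi\Vert$. For the nonlinear term, I would group $|\phi|^2$ as a single quasiperiodic factor with parent function $|\phi_p|^2$, apply Lemma \ref{lem:normineq} once to obtain $\Vert |\phi|^2 \varphi\Vert \leq C \Vert \varphi\Vert \cdot \Vert |\phi_p|^2\Vert_{X_{\alpha}}$, and then invoke the algebra property of $X_{\alpha}$ to conclude $\Vert |\phi_p|^2\Vert_{X_{\alpha}} \leq C \Vert \phi_p\Vert_{X_{\alpha}}^2$. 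Summing yields (i).

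For part (ii), the key algebraic identity is
\begin{align*}
B(\phi) - B(\psi) = \theta(|\phi|^2 - |\psi|^2) = \theta\big(\phi\,\overline{(\phi-\psi)} + \overline{\psi}\,(\phi-\psi)\big),
\end{align*}
which produces the factor $\phi - \psi$ explicitly. Multiplying by $\varphi$ and regrouping the remaining factors, I would write the two resulting terms in the form $(\phi\varphi)(\overline{\phi-\psi})$ and $(\overline{\psi}\varphi)(\phi-\psi)$. Each is then estimated by Lemma \ref{lem:normineq}, taking $\phi-\psi$ as the $L^2_{QP}$ factor and the other as the $X_\alpha$ factor, so that the parent norms that appear are $\Vert (\phi\varphi)_p\Vert_{X_{\alpha}}$ and $\Vert (\overline{\psi}\varphi)_p\Vert_{X_{\alpha}}$. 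Using $(\phi\varphi)_p = \phi_p\varphi_p$ and the algebra property once more bounds these by $\Vert \phi_p\Vert_{X_{\alpha}}\Vert \varphi_p\Vert_{X_{\alpha}}$ and $\Vert \psi_p\Vert_{X_{\alpha}}\Vert \varphi_p\Vert_{X_{\alpha}}$ respectively, which gives (ii).

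The only nontrivial input is the algebra property of $X_{\alpha}$ under the condition $\alpha > n/4$; since the $X_{\alpha}$ norm carries the weight $(1+\Vert \bk\Vert^{4\alpha})$, this is effectively an $H^{2\alpha}(\bbT^n)$ algebra estimate with $2\alpha > n/2$, which is a standard Sobolev algebra inequality on the torus. I expect this to be the main (and only real) obstacle, as it is the one step that is not a direct invocation of the already-established quasiperiodic multiplication inequality; everything else is bookkeeping of parent functions and routine use of Lemma \ref{lem:normineq}.
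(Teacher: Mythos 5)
Your proposal is correct and follows essentially the same route as the paper: part (i) is identical (split $B$ into $V$ and $\theta|\phi|^2$, apply Lemma \ref{lem:normineq} and the $X_\alpha$ algebra property, which the paper imports from Thalhammer's Lemma 1(iii)), and part (ii) rests on the same factorization $|\phi|^2-|\psi|^2=(\phi-\psi)\bar\phi+(\overline{\phi-\psi})\psi$. The only cosmetic difference is in (ii), where the paper applies Lemma \ref{lem:normineq} twice in sequence (first peeling off $\Vert\varphi_p\Vert_{X_\alpha}$, then each product term) while you group the factors as $(\phi\varphi)(\overline{\phi-\psi})$ and invoke the algebra property once more; both are valid and yield the same bound.
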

	
	\begin{proof}
		(i) We apply the following norm inequality \cite[Lemma 1 (iii)]{Thalhammer2012Convergence}
		\begin{align*}
			\Vert \phi_p \varphi_p \Vert_{X_{\alpha}}\leq C \Vert \phi_p\Vert_{X_{\alpha}}
			\Vert \varphi_p\Vert_{X_{\alpha}},~~ \phi_p, \varphi_p\in X_{\alpha}(\bbT^n),
		\end{align*} 
		the Proposition \ref{pro:projectmatrix} and the inequality \eqref{eq:inftyembedX} to get
		\begin{align*}
			\Vert B(\phi) \varphi\Vert
			=\Vert (V+\theta \vert \phi\vert^2)\varphi\Vert
			\leq C (\Vert V_p\Vert_{X_{\alpha}}+\vert \theta\vert \Vert \phi_p\Vert^2_{X_{\alpha}}) \Vert \varphi\Vert.
		\end{align*}
		
		(ii) Since 
		$(B(\phi)- B(\psi))\varphi
		=\theta (\vert \phi\vert^2-\vert \psi\vert^2)\varphi
		=\theta ((\phi- \psi)\bar{\phi}+(\overline{\phi-\psi})\psi)\varphi,$
		then we obtain
		\begin{align*}
			\Vert (B(\phi)- B(\psi))\varphi \Vert
			&\leq C \vert \theta\vert \Vert (\phi- \psi)\bar{\phi}+(\overline{\phi-\psi})\psi\Vert \cdot
			\Vert \varphi_p\Vert_{X_{\alpha}}\\
			&\leq C \vert \theta\vert (\Vert (\phi- \psi) \bar{\phi}\Vert
			+\Vert (\overline{\phi-\psi})  \psi\Vert) 
			\Vert \varphi_p\Vert_{X_{\alpha}}\\
			&\leq C \vert \theta\vert (\Vert \phi- \psi\Vert\cdot  \Vert \phi_p\Vert_{X_{\alpha}}
			+\Vert \phi-\psi\Vert \cdot \Vert \psi_p\Vert_{X_{\alpha}}) 
			\Vert \varphi_p\Vert_{X_{\alpha}}\\
			&=C \vert \theta\vert ( \Vert \phi_p\Vert_{X_{\alpha}}+ \Vert \psi_p\Vert_{X_{\alpha}}) 
			\Vert \varphi_p\Vert_{X_{\alpha}}\Vert \phi- \psi\Vert.
		\end{align*}
		
	\end{proof}

	For the exponential operators, we have the following estimates.
	\begin{lemma}
		\label{lem:ABerror}
		For $\phi, \varphi\in\QP(\bbR^d)$ with the corresponding parent functions $\phi_p$ and $\varphi_p$, respectively, we have (i)
		$
		\Vert e^{it\Delta} \phi\Vert=\Vert \phi\Vert.
		$
		(ii) If further $\Vert V_p\Vert_{X_{\alpha}}\leq C_V$ and 
		$\Vert \phi_p\Vert_{X_{\alpha}},\Vert \varphi_p\Vert_{X_{\alpha}}\leq C_p$ with $\alpha > n/4$, then
		\begin{align*}
			\Vert e^{-itB(\phi)}\phi-e^{-itB(\varphi)}\varphi\Vert
			\leq e^{C(C_V+\vert \theta\vert C_p^2)t} \Vert \phi-\varphi \Vert.
		\end{align*}
	\end{lemma}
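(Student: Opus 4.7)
\textbf{Proof proposal for Lemma \ref{lem:ABerror}.} For part (i), the plan is to use the Fourier characterization directly. Writing $\phi = \sum_{\blam \in \sigma(\phi)} \hphi_{\blam} e^{i\blam \cdot \bx}$ and noting that $\Delta e^{i\blam \cdot \bx} = -|\blam|^2 e^{i\blam \cdot \bx}$, functional calculus (justified by the self-adjointness of $\Delta$ from Lemma \ref{thm:selfadjoint}) gives $e^{it\Delta}\phi = \sum_{\blam \in \sigma(\phi)} e^{-it|\blam|^2}\, \hphi_{\blam}\, e^{i\blam \cdot \bx}$. Since every phase factor has modulus one, the Parseval identity \eqref{eq:Parseval} immediately yields $\Vert e^{it\Delta}\phi\Vert^2 = \sum_{\blam} |\hphi_{\blam}|^2 = \Vert\phi\Vert^2$.

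For part (ii), the crucial observation is that $B(\phi) = V + \theta|\phi|^2$ is real-valued and time-independent (it depends only on the fixed data $\phi$, not on an evolving state), so $e^{-itB(\phi)}$ acts as pointwise multiplication by a unimodular scalar. I would decompose
\begin{align*}
e^{-itB(\phi)}\phi - e^{-itB(\varphi)}\varphi = \bigl(e^{-itB(\phi)} - e^{-itB(\varphi)}\bigr)\phi + e^{-itB(\varphi)}(\phi - \varphi),
\end{align*}
so that the second summand has $L^2_{QP}$ norm exactly $\Vert\phi - \varphi\Vert$. For the first summand, the elementary inequality $|e^{ia} - e^{ib}| \leq |a - b|$ for real $a,b$, together with $B(\phi) - B(\varphi) = \theta(|\phi|^2 - |\varphi|^2)$ and $\bigl||\phi|^2 - |\varphi|^2\bigr| \leq (|\phi| + |\varphi|)|\phi - \varphi|$, yields the pointwise estimate
\begin{align*}
\bigl|\bigl(e^{-itB(\phi)} - e^{-itB(\varphi)}\bigr)\phi\bigr| \leq t\,|\theta|\,(|\phi| + |\varphi|)\,|\phi|\,|\phi - \varphi|.
\end{align*}

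Taking the $L^2_{QP}$ norm and invoking the embedding \eqref{eq:inftyembedX} from Lemma \ref{lem:normineq} to bound $\Vert\phi\Vert_{\infty}, \Vert\varphi\Vert_{\infty} \leq CC_p$ via the parent-function hypothesis would produce an affine bound of the form $(1 + Ct|\theta|C_p^2)\,\Vert\phi - \varphi\Vert$, which I would then majorize via $1 + x \leq e^x$ to reach the exponential form in the statement. The $C_V$ inside the stated exponent is harmless; it can be absorbed to uniformize this estimate with the other time-step bounds in the paper.

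The main technical temptation to avoid is attempting an energy argument by differentiating $\Vert e^{-itB(\phi)}\phi - e^{-itB(\varphi)}\varphi\Vert^2$ and applying Lemma \ref{lemma:B-pro}(ii): this would force the bound through the $X_\alpha$ regularity of the intermediate quantity $e^{-itB(\varphi)}\varphi$, and that regularity is awkward to propagate since the phase factor $e^{-it\theta|\varphi|^2}$ is not obviously bounded in $X_\alpha$. Working pointwise sidesteps this entirely, so only $L^\infty_{QP}$ control on the inputs is needed, which \eqref{eq:inftyembedX} supplies directly.
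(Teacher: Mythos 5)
Your proof is correct, and for part (ii) it takes a genuinely different and more elementary route than the paper. The paper treats $e^{-itB(\phi)}\phi$ and $e^{-itB(\varphi)}\varphi$ as solutions of two auxiliary ODEs, writes their difference via the variation-of-constants formula, estimates the Duhamel integral with Lemma \ref{lemma:B-pro}(ii), and then must control the $X_{\alpha}$ norm of the intermediate state $e^{-i\tau(V_p+\theta|\varphi_p|^2)}\varphi_p$ by invoking an external result (Lemma 2 of Thalhammer), i.e., exactly the regularity-propagation step you flag as the ``technical temptation to avoid.'' You instead exploit that $B(\phi)$ is a fixed real-valued quasiperiodic function, so $e^{-itB(\phi)}$ is pointwise multiplication by a unimodular factor; the splitting $\bigl(e^{-itB(\phi)}-e^{-itB(\varphi)}\bigr)\phi + e^{-itB(\varphi)}(\phi-\varphi)$ combined with $|e^{ia}-e^{ib}|\leq|a-b|$ and the $L^{\infty}_{QP}$ embedding \eqref{eq:inftyembedX} gives $(1+Ct|\theta|C_p^2)\Vert\phi-\varphi\Vert$ directly, which is in fact slightly sharper (no $C_V$ needed in the exponent) and trivially implies the stated bound. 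What the paper's ODE route buys is generality --- it would survive if $B$ were a genuine (non-multiplication) operator or if the nonlinear density were not conserved under the flow --- but for this lemma your pointwise argument is cleaner and equally rigorous. For part (i) you and the paper reach the same conclusion by essentially equivalent means (explicit Fourier multiplier plus Parseval versus Stone's theorem for the self-adjoint $\Delta$); just note the paper's sign convention elsewhere is $e^{it\Delta}e^{i\blam\cdot\bx}=e^{it\Vert\blam\Vert^{2}}e^{i\blam\cdot\bx}$, which does not affect the unimodularity you rely on.
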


	\begin{proof}
		(i) Since the operator $\Delta$ is self-adjoint in the $L^2_{QP}$ inner product  (see Lemma \ref{thm:selfadjoint}), then according to the Stone's theorem \cite{Stone1930linear}, the conclusion (i) holds.
		
		(ii) Consider the following two initial value problems
		\begin{align}
			\begin{cases}
				i\dfrac{d}{dt} \psi_1(t)=B(\phi) \psi_1(t),\\
				\psi_1(0)=\phi,
			\end{cases}
			\label{initialproblem-1}
		\end{align}
		and 
		\begin{align}
			\begin{cases}
				i\dfrac{d}{dt} \psi_2(t)=B(\varphi) \psi_2(t),\\
				\psi_2(0)=\varphi,
			\end{cases}
			\label{initialproblem-2}
		\end{align}
		whose analytical solutions are $\psi_1=e^{-itB(\phi)}\phi$ and $\psi_2=e^{-itB(\varphi)}\varphi.$
		Furthermore, we consider the initial value problem
		\begin{align*}
			\begin{cases}
				i\dfrac{d}{dt} (\psi_1-\psi_2)(t)=B(\phi) \psi_1(t)-B(\varphi) \psi_2(t),\\
				(\psi_1-\psi_2)(0)=\phi-\varphi.
			\end{cases}
		\end{align*}
		Since $B(\phi) \psi_1(t)-B(\varphi) \psi_2(t)=B(\phi) (\psi_1(t)-\psi_2(t))+(B(\phi)-B(\varphi)) \psi_2(t)$, we apply the  variation-of-constants formula to obtain
		\begin{align*}
			(\psi_1-\psi_2)(t)
			&=e^{-itB(\phi)}(\phi-\varphi)+\int_{0}^{t} e^{-i(t-\tau)B(\phi)}(B(\phi)-B(\varphi)) \psi_2(\tau)d\tau.
		\end{align*}
		By $\Vert e^{-itB(\phi)}\phi \Vert= \Vert \phi \Vert $,
		$\psi_{2,p}=e^{-i\tau (V_p+\theta \vert \varphi_p\vert^2)}\varphi_p$ and
		Lemma 2 in \cite{Thalhammer2012Convergence}, which gives 
		\begin{align*}
			\Vert e^{-i\tau (V_p+\theta \vert \phi_p\vert^2)}\varphi_p \Vert_{X_{\alpha}}
			\leq e^{C(\Vert V_p\Vert_{X_{\alpha}}+\vert \theta\vert 
				\Vert \phi_p\Vert^2_{X_{\alpha}} )\tau}  \Vert \varphi_p\Vert_{X_{\alpha}},
		\end{align*}
		we have
		\begin{align*}
			&\Big \Vert \int_{0}^{t} e^{-i(t-\tau)B(\phi)}(B(\phi)-B(\varphi)) \psi_2(\tau) d\tau \Big \Vert\\
			&\quad\leq  \int_{0}^{t} \Vert e^{-i(t-\tau)B(\phi)}(B(\phi)-B(\varphi)) \psi_2(\tau)\Vert d\tau\\
			&\quad\leq \int_{0}^{t} 
			\Vert(B(\phi)-B(\varphi)) \psi_2(\tau)\Vert d\tau\\
			&\quad\leq C \vert \theta \vert 
			(\Vert  \phi_p\Vert_{X_{\alpha}} + \Vert \varphi_p \Vert_{X_{\alpha}})
			\int_{0}^{t} 
			\Vert \phi-\varphi \Vert \cdot  \Vert \psi_{2,p}\Vert_{X_{\alpha}} d\tau~~(\mbox{Lemma \ref{lemma:B-pro} (ii)})\\
			&\quad\leq  C \vert \theta \vert 
			(\Vert  \phi_p\Vert_{X_{\alpha}} + \Vert \varphi_p \Vert_{X_{\alpha}})
			\int_{0}^{t}
			\Vert \phi-\varphi \Vert e^{C(\Vert V_p\Vert_{X_{\alpha}}+\vert \theta\vert 
				\Vert \varphi_p\Vert^2_{X_{\alpha}} )\tau}  \Vert \varphi_p\Vert_{X_{\alpha}} d\tau\\
			&\quad\leq C \vert \theta \vert 
			(\Vert  \phi_p\Vert_{X_{\alpha}} + \Vert \varphi_p \Vert_{X_{\alpha}})
			\Vert \varphi_p\Vert_{X_{\alpha}}
			(e^{C(\Vert V_p\Vert_{X_{\alpha}}+\vert \theta\vert 
				\Vert \varphi_p\Vert^2_{X_{\alpha}} )t}-1)
			\Vert \phi-\varphi \Vert\\
			&\quad\leq 2 C \vert \theta \vert
			e^{C( C_V+\vert \theta\vert C_p^2)t}
			\Vert \phi-\varphi \Vert.
		\end{align*}
		Consequently, we apply  $1+x\leq e^x$ for $x\geq 0$ to get
		\begin{align*}
			\Vert \psi_1-\psi_2\Vert 
			&\leq \Vert e^{-itB(\phi)}(\phi-\varphi)\Vert + \Big \Vert \int_{0}^{t} e^{-i(t-\tau)B(\phi)}(B(\phi)-B(\varphi)) \psi_2(\tau)d\tau \Big\Vert \\
			&\leq (1+2C \vert \theta \vert e^{C( C_V+\vert \theta\vert C_p^2)t})
			\Vert \phi-\varphi \Vert\\
			&\leq  e^{C( C_V+\vert \theta\vert C_p^2)t}
			\Vert \phi-\varphi \Vert,
		\end{align*}
		which completes the proof.
	\end{proof}
	



	Then we show the norm-preserving property of parent function under the operation of  $e^{i\frac{\tau}{2}\Delta }$.
	
	\begin{lemma}
		\label{lem:vp-norm}
		For $\phi\in\QP(\bbR^d)$ with
		\begin{align}
			\phi = \sum_{\blam_j\in \sigma(\phi)} \hat{\phi}_j e^{i\blam_j \cdot \bx},
			\label{eq:Fourierseries-v}
		\end{align}
		and $\phi_p\in X_{\alpha}$ for some $\alpha \geq 0$, we have 
		$\Vert \phi^*_p \Vert_{X_{\alpha}}= \Vert \phi_p \Vert_{X_{\alpha}}$ where $\phi^*=e^{i\frac{\tau}{2}\Delta } \phi$.
	\end{lemma}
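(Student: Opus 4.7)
The plan is to pass to the Fourier expansion, use that the half-flow $e^{i\frac{\tau}{2}\Delta}$ acts as a unimodular phase multiplier on each Fourier mode, and then read off the norm identity at the level of the parent Fourier coefficients.

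First, I would apply $e^{i\frac{\tau}{2}\Delta}$ termwise to the series \eqref{eq:Fourierseries-v}. Since $\Delta e^{i\blam_j\cdot \bx} = -|\blam_j|^2 e^{i\blam_j\cdot \bx}$, the functional-calculus identity gives
\begin{align*}
\phi^{*}(\bx) = e^{i\frac{\tau}{2}\Delta}\phi(\bx) = \sum_{\blam_j\in \sigma(\phi)} \hat{\phi}_j \, e^{-i\frac{\tau}{2}|\blam_j|^2}\, e^{i\blam_j\cdot \bx},
\end{align*}
so that the Fourier–Bohr coefficients of $\phi^{*}$ are $\hat{\phi}^{*}_{\blam_j} = e^{-i\frac{\tau}{2}|\blam_j|^2}\,\hat{\phi}_{\blam_j}$. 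Writing $\blam_j = \bP\bk_j$ with $\bk_j\in\bbZ^n$ and invoking Lemma \ref{lemma:Birkhoff}, the parent Fourier coefficients satisfy $\hat{\phi}_{p,\bk_j} = \hat{\phi}_{\blam_j}$, so the natural candidate for the parent function of $\phi^{*}$ is
\begin{align*}
\phi^{*}_p(\by) = \sum_{\bk\in \bbZ^n} e^{-i\frac{\tau}{2}|\bP\bk|^2}\,\hat{\phi}_{p,\bk}\, e^{i\bk\cdot \by}, \qquad \by\in\bbT^n,
\end{align*}
and indeed $\phi^{*}_p(\bP^T\bx) = \phi^{*}(\bx)$.

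Next, since $|e^{-i\frac{\tau}{2}|\bP\bk|^2}| = 1$ for every $\bk\in\bbZ^n$, one has $|\hat{\phi}^{*}_{p,\bk}| = |\hat{\phi}_{p,\bk}|$ for all $\bk$. Substituting into the definition of the $X_\alpha$ norm gives
\begin{align*}
\Vert \phi^{*}_p \Vert_{X_{\alpha}}^2 = \sum_{\bk\in\bbZ^n}(1+\Vert \bk\Vert^{4\alpha})\, |\hat{\phi}^{*}_{p,\bk}|^2 = \sum_{\bk\in\bbZ^n}(1+\Vert \bk\Vert^{4\alpha})\, |\hat{\phi}_{p,\bk}|^2 = \Vert \phi_p\Vert_{X_{\alpha}}^2,
\end{align*}
which yields the claim. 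There is essentially no analytic obstacle here; the only point that requires a line of justification is the identification of the parent function of $\phi^{*}$, which is where Lemma \ref{lemma:Birkhoff} enters to guarantee that the coefficients of $\phi^{*}_p$ are precisely those of $\phi_p$ twisted by the unimodular symbol $e^{-i\frac{\tau}{2}|\bP\bk|^2}$.
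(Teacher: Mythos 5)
Your proposal is correct and follows essentially the same route as the paper: expand $\phi$ in its Fourier--Bohr series, observe that $e^{i\frac{\tau}{2}\Delta}$ multiplies each mode by a unimodular phase, pass to the parent function via Lemma \ref{lemma:Birkhoff}, and conclude since the $X_\alpha$-norm depends only on the moduli of the parent Fourier coefficients. (Your sign $e^{-i\frac{\tau}{2}|\blam_j|^2}$ versus the paper's $e^{+i\frac{\tau}{2}\Vert\blam_j\Vert^2}$ is immaterial, as only the modulus enters.)
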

	
	\begin{proof}
		Due to $\phi_p = \sum_{\bk_j\in \bK(\phi)} \hat{\phi}_j e^{i\bk_j \cdot \by} ~(\by\in \bbT^n)$ and 
		$\Delta^{\alpha} (e^{i\blam_j \cdot \bx})= \Vert \blam_j\Vert^{2\alpha} e^{i\blam_j \cdot \bx}$, we obtain
		\begin{align*}
			e^{i\frac{\tau}{2}\Delta } e^{i\blam_j \cdot \bx}= e^{i\frac{\tau}{2} \Vert \blam_j \Vert^2 } e^{i\blam_j \cdot \bx},
		\end{align*}
		which implies
		\begin{align*}
			\phi^* =\sum_{\blam_j\in \sigma(\phi)} \hat{\phi}_j e^{i \frac{\tau}{2} \Vert \blam_j \Vert^2} e^{i\blam_j \cdot \bx},~~\phi^*_p= \sum_ {\bk_j\in \bK(\phi)}\hat{\phi}_j e^{i \frac{\tau}{2} \Vert \bP\bk_j \Vert^2} e^{i\bk_j \cdot \by}.
		\end{align*}
		Hence, 
		$\Vert \phi^*_p \Vert_{X_{\alpha}}=\Vert \phi^*_p \Vert+\Vert  \Delta^{\alpha} \phi^*_p \Vert
		=\Vert \phi_p \Vert+\Vert  \Delta^{\alpha} \phi_p \Vert=\Vert \phi_p \Vert_{X_{\alpha}}.$
	\end{proof}
	
	To give an upper bound estimate for $\Vert \psi_m-\psi(t_m)\Vert $, we analyze the upper bounds for $\Vert \Gamma_\phi \phi-\Gamma_\varphi  \varphi\Vert$ and 
	$\Vert \Gamma_{\ell-1,\ell-1} \psi(t_{\ell-1})-\psi(t_\ell)\Vert$.
	
	\begin{thm}
		\label{thm:time-error-part}
		If $\phi, \varphi, \psi(t_{\ell-1})\in \QP(\bbR^d)$ satisfy 
		$\Vert \phi_p \Vert_{X_{\alpha}}, \Vert \varphi_p \Vert_{X_{\alpha}},  \Vert \psi_p(t_{\ell-1}) \Vert_{X_{\alpha}}\leq C$ for $\ell=1,2,\cdots,m$ and $\alpha>\max\{4,n/4\}$.
		Then, the following estimates hold:
		
		(i) $\Vert \Gamma_\phi \phi-\Gamma_\varphi \varphi\Vert
		\leq e^{C( C_V+\vert \theta\vert C_p^2)\tau}
		\Vert \phi- \varphi \Vert.$
		
		(ii) $\Vert \Gamma_{\ell-1,\ell-1} \psi(t_{\ell-1})-\psi(t_\ell)\Vert \leq C\tau^3.$ 
	\end{thm}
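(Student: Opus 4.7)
For (i), the plan is a direct reduction via the isometry of the free propagator. I would write $\Gamma_\phi\phi - \Gamma_\varphi\varphi = e^{i\tau\Delta/2}\bigl(e^{-i\tau B(\phi^*)}\phi^* - e^{-i\tau B(\varphi^*)}\varphi^*\bigr)$, where $\phi^* = e^{i\tau\Delta/2}\phi$ and $\varphi^* = e^{i\tau\Delta/2}\varphi$, then invoke Lemma \ref{lem:ABerror}(i) to peel off the outer $e^{i\tau\Delta/2}$ without changing the $L^2_{QP}$-norm. To apply Lemma \ref{lem:ABerror}(ii) with $\phi^*,\varphi^*$ playing the roles of $\phi,\varphi$, I need $X_\alpha$-bounds on the parent functions $\phi^*_p, \varphi^*_p$, which Lemma \ref{lem:vp-norm} supplies as $\Vert\phi^*_p\Vert_{X_\alpha} = \Vert\phi_p\Vert_{X_\alpha} \le C_p$ and similarly for $\varphi^*$. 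Lemma \ref{lem:ABerror}(ii) then yields the factor $e^{C(C_V+|\theta|C_p^2)\tau}$ multiplying $\Vert\phi^*-\varphi^*\Vert$, and a final application of the isometry reduces the latter to $\Vert\phi-\varphi\Vert$.

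For (ii), I will carry out the classical local-truncation-error analysis of Strang splitting, adapted to the quasiperiodic setting. Set $\phi := \psi(t_{\ell-1})$, so that $\Gamma_{\ell-1,\ell-1}\psi(t_{\ell-1}) = e^{i\tau\Delta/2}e^{-i\tau B(\phi^*)}e^{i\tau\Delta/2}\phi$, while $\psi(t_\ell)$ is the exact flow of $i\psi_t = -\Delta\psi + B(\psi)\psi$ on $[t_{\ell-1}, t_\ell]$. I plan to expand both quantities in $\tau$ through order two: the exact solution via Taylor's theorem with integral remainder, using $\psi_t = i(\Delta\psi - B(\psi)\psi)$ and differentiating once more to write $\psi_{tt}$ in terms of $\Delta^2\psi$, $\Delta(B(\psi)\psi)$, $B(\psi)\Delta\psi$, $B(\psi)^2\psi$, and the derivative of $B$ along $\psi_t$; and $\Gamma_\phi\phi$ by expanding each of its three exponential factors as a power series, noting that $\phi^* = \phi + \frac{i\tau}{2}\Delta\phi + O(\tau^2)$ feeds into the $B$-dependence. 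The symmetric composition structure of Strang makes the terms through $\tau^2$ coincide with $\psi, \tau\psi_t, \tfrac{\tau^2}{2}\psi_{tt}$ exactly, so the residue starts at order $\tau^3$.

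The remaining task is to bound the $\tau^3$ remainder in $L^2_{QP}$. The resulting expressions involve at most two Laplacians acting on $\phi$, multiplication by $V$ or $|\phi|^2$, and an integral remainder carrying $\partial_t^3\psi$. To control them I will combine Lemma \ref{pro:bound-deltav} (to transfer $\Delta$ onto the parent function at the cost of $\sigma_{\max}^2(\bP)$), Lemma \ref{lemma:B-pro}(i) (to bound multiplication by $B(\cdot)$ in $L^2_{QP}$), Lemma \ref{lem:normineq} (to convert $X_\alpha$-norms of parents into $L^\infty_{QP}$ bounds on their quasiperiodic counterparts), and the hypothesis $\Vert\psi_p(t_{\ell-1})\Vert_{X_\alpha}\le C_p$ with $\alpha>4$, which is calibrated to ensure that every emerging parent-function norm sits inside $L^2(\bbT^n)$ with a constant depending only on $C_V, C_p, |\theta|$. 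The main obstacle will be the careful bookkeeping of the third-order terms, and in particular the fact that the split scheme uses $B(\phi^*)$ rather than $B(\psi(t))$: I must verify that replacing $B(\phi^*)$ by $B(\phi)$, and $B(\phi)$ by $B(\psi(t))$, contributes only at order $\tau^3$ or higher, which will follow from the Lipschitz-type bound of Lemma \ref{lemma:B-pro}(ii) together with the $L^2_{QP}$-isometry estimate $\Vert\phi^*-\phi\Vert = O(\tau)\Vert\Delta\phi\Vert$ transported to the parent side via Lemma \ref{pro:bound-deltav}.
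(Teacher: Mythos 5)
Your argument for part (i) is exactly the paper's: peel off the outer $e^{i\tau\Delta/2}$ by the $L^2_{QP}$-isometry of Lemma \ref{lem:ABerror}(i), transfer the $X_\alpha$-bounds to $\phi^*_p,\varphi^*_p$ via Lemma \ref{lem:vp-norm}, apply Lemma \ref{lem:ABerror}(ii), and use the isometry once more on $\Vert\phi^*-\varphi^*\Vert$. Nothing to add there.

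For part (ii) your overall strategy (match the split flow and the exact flow to order $\tau^2$, bound the $O(\tau^3)$ residue with the regularity lemmas) is the right one, but your technical route differs from the paper's in a way worth noting. You propose a direct Taylor expansion of $\psi(t)$ in $t$, writing $\psi_{tt}$ through $\Delta^2\psi$ and carrying an integral remainder with $\partial_t^3\psi$, together with a power-series expansion of all three exponential factors of $\Gamma_{\ell-1,\ell-1}$. The paper instead expands only the \emph{bounded} multiplication semigroup $e^{-i\tau B(\psi^*_{\ell-1})}$ via the integral form $e^{tD}=I+tD+\tfrac12 t^2D^2+\int\!\!\int\!\!\int e^{\tau_3 D}D^3$, expands $e^{i\tau\Delta/2}$ only to first order with an integral remainder that lands $\Delta$ on the product $B^2(\psi^*_{\ell-1})\psi^*_{\ell-1}$ (controlled by Lemma \ref{pro:bound-deltav} and the algebra property of $X_\alpha$), and represents the exact solution by iterating the Duhamel formula so that the unitary free propagators are never expanded. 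The leading difference is then recognized as the trapezoidal-rule quadrature error $\tfrac{\tau^3}{2}\int_0^1\theta(1-\theta)f''(\theta)\,d\theta$ for an explicit integrand $f$, with $\Vert f''\Vert$ controlled by $H^4$-type norms of $V$ and $\psi^*_{\ell-1}$. The payoff of the paper's route is regularity economy and automatic cancellation at order $\tau^2$: at most two Laplacians ever fall on the solution, and the symmetric-composition cancellation is packaged into the quadrature error rather than verified term by term. Your route forces $\Delta^3\psi$ (from $\partial_t^3\psi$) and $\Delta^3\phi$ (from the free-propagator series) into the remainder; under the hypothesis $\alpha>4$ (so $X_\alpha$ embeds well beyond $H^6$) these are still bounded via the obvious iterate of Lemma \ref{pro:bound-deltav}, so your argument closes, but the bookkeeping is heavier and you must take care to write the exponential expansions with integral remainders (the power series of $e^{i\tau\Delta/2}$ does not converge on $L^2_{QP}$ for the unbounded operator $\Delta$) and to verify the order-$\tau^2$ cancellation, including the commutator terms between $\Delta$ and multiplication by $B$, explicitly. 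Your treatment of the $B(\phi^*)$ versus $B(\psi(t))$ discrepancy via Lemma \ref{lemma:B-pro}(ii) matches the role this lemma plays in the paper's remainder estimates.
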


	\begin{proof}

		
		(i)
		Applying Lemma \ref{lem:ABerror} and Lemma \ref{lem:vp-norm}, we can obtain
		\begin{align*}
			\Vert \Gamma_\phi \phi-\Gamma_\varphi  \varphi\Vert
			&=\Vert e^{i\frac{\tau}{2}\Delta } 
			e^{-i\tau B(\phi^*)}
			e^{i\frac{\tau}{2}\Delta } \phi-
			e^{i\frac{\tau}{2}\Delta } 
			e^{-i\tau B(\varphi^*)}
			e^{i\frac{\tau}{2}\Delta }  \varphi \Vert\\
			&=\Vert e^{-i\tau B(\phi^*)}
			e^{i\frac{\tau}{2}\Delta } \phi-
			e^{-i\tau B(\varphi^*)}
			e^{i\frac{\tau}{2}\Delta }  \varphi \Vert\\
			&\leq e^{C( C_V+\vert \theta\vert C_p^2)\tau}
			\Vert \phi- \varphi \Vert.
		\end{align*}
		
		(ii)
		For an operator $D$, we have
		\begin{align*}
			e^{tD}=I+e^{\tau D}\arrowvert^t_{\tau =0}
			=I+\int_{0}^{t} \frac{d}{d\tau} e^{\tau D} d\tau
			=I+\int_{0}^{t} e^{\tau D} D d\tau,
		\end{align*}
		which implies the expansion
		\begin{align*}
			e^{tD}=I+t D+\frac{1}{2} t^2 D^2 +\int_{0}^{t} \int_{0}^{\tau_1} \int_{0}^{\tau_2}
			e^{\tau_3 D} D^3 d\tau_3 d\tau_2 d\tau_1,
			~~0<\tau_3\leq \tau_2 \leq \tau_1\leq t.
		\end{align*}
		Therefore, denoting $\psi^*_{\ell-1}=e^{i\frac{\tau}{2}\Delta }\psi(t_{\ell-1})$, we have
		\begin{align*}
			e^{-i\tau B(\psi^*_{\ell-1})}\psi^*_{\ell-1}
			=\psi^*_{\ell-1} -i\tau B(\psi^*_{\ell-1})\psi^*_{\ell-1}
			-\frac{1}{2} \tau^2 B^2(\psi^*_{\ell-1})\psi^*_{\ell-1} 
			+r_1(\tau),
		\end{align*}
		where
		\begin{align*}
			r_1(\tau)=i \int_{0}^{\tau} \int_{0}^{\tau_1} \int_{0}^{\tau_2}
			e^{-i\tau_3 B(\psi^*_{\ell-1})} B^3(\psi^*_{\ell-1}) \psi^*_{\ell-1} d\tau_3 d\tau_2 d\tau_1.
		\end{align*}
		Note that for the quasiperiodic function $\phi$, we apply Lemma \ref{lemma:Birkhoff} to get
		$\Vert \phi \Vert=\Vert \phi_p\Vert=\Vert \phi_p\Vert_{X_0}
		\leq \Vert \phi_p\Vert_{X_{\alpha}}$ for $\alpha\geq 0.$
		Hence, it follows that
		\begin{align*}
			\Vert r_1(\tau)\Vert
			&\leq \int_{0}^{\tau} \int_{0}^{\tau_1} \int_{0}^{\tau_2}
			\Vert e^{-i\tau_3 B(\psi^*_{\ell-1})} B^3(\psi^*_{\ell-1}) \psi^*_{\ell-1} \Vert  d\tau_3 d\tau_2 d\tau_1\\
			&\leq \int_{0}^{\tau} \int_{0}^{\tau_1} \int_{0}^{\tau_2}
			\Vert B^3(\psi^*_{\ell-1}) \psi^*_{\ell-1} \Vert  d\tau_3 d\tau_2 d\tau_1\\
			&\leq \int_{0}^{\tau} \int_{0}^{\tau_1} \int_{0}^{\tau_2}
			C(\Vert V_p\Vert_{X_{\alpha}}, \Vert \psi_p(t_{\ell-1})\Vert_{X_{\alpha}})
			d\tau_3 d\tau_2 d\tau_1\leq C\tau^3.
		\end{align*}
		Furthermore, we obtain
		\begin{align}
			&e^{i\frac{\tau}{2} \Delta} e^{-i\tau B(\psi^*_{\ell-1})} \psi^*_{\ell-1}\nonumber\\
			&=e^{i\frac{\tau}{2} \Delta} \psi^*_{\ell-1} 
			-i\tau e^{i\frac{\tau}{2} \Delta} B(\psi^*_{\ell-1}) \psi^*_{\ell-1}
			-\frac{1}{2} \tau^2 e^{i\frac{\tau}{2} \Delta}
			B^2(\psi^*_{\ell-1})\psi^*_{\ell-1} 
			+e^{i\frac{\tau}{2} \Delta}r_1(\tau)\nonumber\\
			&=e^{i\frac{\tau}{2} \Delta} \psi^*_{\ell-1} 
			-i\tau e^{i\frac{\tau}{2} \Delta} B(\psi^*_{\ell-1}) \psi^*_{\ell-1}
			+e^{i\frac{\tau}{2} \Delta}r_1(\tau)\label{z3}\\
			&~~~-\frac{1}{2} \tau^2 B^2(\psi^*_{\ell-1})\psi^*_{\ell-1}
			- \frac{i}{4} \tau^2 \int^\tau_{0} e^{i\frac{\tau_1}{2} \Delta}\Delta
			B^2(\psi^*_{\ell-1}) \psi^*_{\ell-1} d\tau_1\nonumber\\
			&=e^{i\frac{\tau}{2} \Delta} \psi^*_{\ell-1} 
			-i\tau e^{i\frac{\tau}{2} \Delta} B(\psi^*_{\ell-1})\psi^*_{\ell-1}
			-\frac{1}{2} \tau^2 B^2(\psi^*_{\ell-1})\psi^*_{\ell-1}
			+r_2(\tau),\nonumber
		\end{align}
		where
		\begin{align*}
			r_2(\tau)=e^{i\frac{\tau}{2} \Delta}r_1(\tau)
			- \frac{i}{4} \tau^2 \int^\tau_{0} e^{i\frac{\tau_1}{2} \Delta}\Delta
			B^2(\psi^*_{\ell-1}) \psi^*_{\ell-1} d\tau_1.
		\end{align*}
		Thus $r_2(\tau)$ contains the residual function $r_1(\tau)$.
		Let $\varphi^{**}_{\ell-1}=(V+\theta \vert \psi^*_{\ell-1} \vert^2)^2 \psi^*_{\ell-1}$, then $\varphi^{**}_{\ell-1,p}=(V_p+\theta \vert \psi^*_{\ell-1,p} \vert^2)^2 \psi^*_{\ell-1,p}$ is its parent function and 
		\begin{align*}
			\Vert r_2(\tau)\Vert 
			&\leq \Vert e^{i\frac{\tau}{2} \Delta}r_1(\tau) \Vert
			+\frac{1}{4} \tau^2   \int^\tau_{0} \Vert  e^{i\frac{\tau_1}{2} \Delta}
			\Delta B^2(\psi^*_{\ell-1}) \psi^*_{\ell-1} \Vert d\tau_1\\
			&=\Vert r_1(\tau) \Vert
			+\frac{1}{4} \tau^2   \int^\tau_{0} \Vert 
			\Delta (V+\theta \vert \psi^*_{\ell-1} \vert^2)^2 \psi^*_{\ell-1} \Vert d\tau_1\\
			&\leq \Vert r_1(\tau) \Vert
			+\frac{1}{4} \tau^2  \sigma_{max}^2(\bP)\int^\tau_{0} \Vert 
			\Delta \varphi^{**}_{\ell-1,p} \Vert d\tau_1 
			~~ \mbox{(Proposition \ref{pro:bound-deltav})}\\
			&\leq \Vert r_1(\tau) \Vert
			+\frac{1}{4} \tau^2  \sigma_{max}^2(\bP)C(\Vert V_p \Vert_{X_{\alpha}},\Vert \psi_p(t_{\ell-1}) \Vert_{X_{\alpha}})\int^\tau_{0} d\tau_1\leq C \tau^3,
		\end{align*}
		where $C$ depends on $\sigma_{max}(\bP)$, and the penultimate inequality holds due to the following relation for $\alpha \geq 1$
		\begin{align*}
			\Vert  \Delta \varphi^{**}_{\ell-1,p} \Vert
			\leq \Vert \varphi^{**}_{j-1,p} \Vert_{X_1}
			&\leq \Vert (V_p+\theta \vert \psi^*_{\ell-1,p} \vert^2)^2\Vert_{X_1} \Vert \psi_p(t_{\ell-1}) \Vert_{X_1}\\
			&\leq \Vert (V_p+\theta \vert \psi^*_{\ell-1,p} \vert^2)^2\Vert_{X_\alpha} \Vert \psi_p(t_{\ell-1}) \Vert_{X_\alpha}\\
			&\leq C(\Vert V_p \Vert_{X_{\alpha}},\Vert \psi_p(t_{\ell-1}) \Vert_{X_{\alpha}}).
		\end{align*}
		
		Now we are in the position to expand $\psi(t_\ell)$ for the sake of analysis.
		By the Duhamel formula of the evaluation equation, we have
		\begin{align*}
			\psi(t_\ell)
			&=e^{i\tau \Delta}\psi(t_{\ell-1})
			+\int_{0}^\tau e^{i(\tau-\tau_1)\Delta} (V+\theta \vert \psi(t_{\ell-1}+\tau_1) \vert^2)\psi(t_{\ell-1}+\tau_1) d\tau_1,~0<\tau_1\leq \tau.
		\end{align*}
		Denote $R_{\ell-1}(\tau, \tau_1)=e^{i(\tau-\tau_1)\Delta} (V+\theta \vert \psi(t_{\ell-1}+\tau_1) \vert^2)\psi(t_{\ell-1}+\tau_1)$.
		Similarly, it follows that
		\begin{align*}
			\psi(t_{\ell-1}+\tau_1)
			&=e^{i\tau_1 \Delta}\psi(t_{\ell-1})+\int_{0}^{\tau_1} R_{\ell-1}(\tau_1,\tau_2)d\tau_2\\
			&=\psi^*_{\ell-1}(\tau_1)+\int_{0}^{\tau_1} R_{\ell-1}(\tau_1,\tau_2)d\tau_2,
		\end{align*}
		where $0<\tau_2\leq\tau_1$.
		Therefore, 
		\begin{align*}
			&\psi(t_\ell)=e^{i\tau \Delta}\psi(t_{\ell-1})\\
			&+\int_{0}^\tau e^{i(\tau-\tau_1)\Delta} \Big (V+\theta 
			\Big \vert \psi^*_{\ell-1}(\tau_1)+\int_{0}^{\tau_1} R_{\ell-1}(\tau_1,\tau_2)d\tau_2 \Big \vert^2\Big ) \Big (\psi^*_{\ell-1}(\tau_1)+\int_{0}^{\tau_1} R_{\ell-1}(\tau_1,\tau_2)d\tau_2 \Big ) d\tau_1\\
			&=e^{i\tau \Delta}\psi(t_{\ell-1})
			+\int_{0}^\tau e^{i(\tau-\tau_1)\Delta} (V+\theta 
			\vert \psi^*_{\ell-1}(\tau_1)\vert^2) \psi^*_{\ell-1}(\tau_1)d\tau_1\\
			&+\int_{0}^\tau e^{i(\tau-\tau_1)\Delta} \int_{0}^{\tau_1} [(V+2\theta 
			\vert \psi^*_{\ell-1}(\tau_1)\vert^2) R_{\ell-1}(\tau_1,\tau_2)
			+ \theta (\psi^*_{\ell-1}(\tau_1))^2 \overline{R}_{\ell-1}(\tau_1,\tau_2)] d\tau_2 d\tau_1+r_3(\tau),
		\end{align*}
		where
		\begin{align*}
			r_3(\tau)&=\int_{0}^\tau e^{i(\tau-\tau_1)\Delta}
			\Big \{
			\theta \Big \vert \int_{0}^{\tau_1} R_{\ell-1}(\tau_1,\tau_2)d\tau_2 \Big\vert^2  \psi^*_{\ell-1}(\tau_1)
			+ 2\theta \Big ( \int_{0}^{\tau_1} R_{\ell-1}(\tau_1,\tau_2)d\tau_2 \Big)^2  \overline{\psi^*_{\ell-1}}(\tau_1)
			\Big \} d\tau_1\\
			&~~~+\Big \vert \int_{0}^{\tau_1} R_{\ell-1}(\tau_1,\tau_2)d\tau_2 \Big\vert^2\int_{0}^{\tau_1} R_{\ell-1}(\tau_1,\tau_2)d\tau_2.
		\end{align*}
		Similar to the estimate of $r_2(\tau)$, we could bound $r_3(\tau)$ as $\Vert r_3(\tau) \Vert \leq C\tau^3$.
		
		Applying the Duhamel formula again, we have
		\begin{align*}
			\psi(t_{\ell-1}+\tau_2)
			= \psi^*_{\ell-1}(\tau_2)+ \int_{0}^{\tau_2} R_{\ell-1}(\tau_2,\tau_3) d\tau_3,~~0<\tau_3\leq\tau_2,
		\end{align*} 
		and 
		\begin{align*}
			\psi^*_{\ell-1}(\tau_2)=e^{i(\tau_2-\tau_1)\Delta}\psi^*_{\ell-1}(\tau_1)
			=(I+R_{\Delta}(\tau_1,\tau_2))\psi^*_{\ell-1}(\tau_1),
		\end{align*}
		where 
		\begin{align*}
			R_{\Delta}(\tau_1,\tau_2)=\int_{0}^{\tau_1} e^{it_1 \Delta}\Delta dt_1
			+\int_{0}^{\tau_2} e^{-it_2 \Delta}\Delta dt_2
			+\int_{0}^{\tau_1}\int_{0}^{\tau_2} (e^{it_1 \Delta}-e^{-it_2 \Delta}) \Delta d t_1 d t_2.
		\end{align*}
		Therefore, 
		\begin{align*}
			R_{\ell-1}(\tau_1,\tau_2)
			&=(I+\overline{R}_{\Delta}) (V+\theta \vert \psi(t_{\ell-1}+\tau_2) \vert^2) 
			\psi(t_{\ell-1}+\tau_2) \\
			&=(V+\theta \vert \psi^*_{\ell-1}(\tau_1) \vert^2) \psi^*_{\ell-1}(\tau_1)+r_4(\tau_1,\tau_2),
		\end{align*}
		where $\overline{R}_{\Delta}=R^{-1}_{\Delta}$ and each of the terms in $r_4(\tau_1,\tau_2)$ contains the coupling of two integral terms.
		Now, we can obtain
		\begin{align*}
			\psi(t_\ell)&=e^{i\tau \Delta}\psi(t_{\ell-1})
			+\int_{0}^\tau e^{i(\tau-\tau_1)\Delta} (V+\theta 
			\vert \psi^*_{\ell-1}(\tau_1)\vert^2) \psi^*_{\ell-1}(\tau_1)d\tau_1+r_3(\tau)\\
			&~~+\int_{0}^\tau e^{i(\tau-\tau_1)\Delta} \int_{0}^{\tau_1} [(V+2\theta 
			\vert \psi^*_{\ell-1}(\tau_1)\vert^2) R_{\ell-1}(\tau_1,\tau_2)
			+ \theta (\psi^*_{\ell-1}(\tau_1))^2 \overline{R}_{\ell-1}(\tau_1,\tau_2)] d\tau_2 d\tau_1
			\\
			&=e^{i\tau \Delta}\psi(t_{\ell-1})
			+\int_{0}^\tau e^{i(\tau-\tau_1)\Delta} B(\psi^*_{\ell-1}(\tau_1)) \psi^*_{\ell-1}(\tau_1)d\tau_1\\
			&~~+\int_{0}^\tau e^{i(\tau-\tau_1)\Delta} \tau_1 B^2(\psi^*_{\ell-1}(\tau_1)) \psi^*_{\ell-1}(\tau_1)d\tau_1
			+r_5(\tau)
			+r_3(\tau),
		\end{align*}
		where $r_5(\tau)$ contains the higher order residual function $r_4(\tau_1,\tau_2)$ and $\Vert r_5(\tau) \Vert\leq C\tau^3$. We subtract this equation from (\ref{z3}) and apply the trapezoidal rule to get
		\begin{align*}
			&\psi(t_\ell)-e^{i\frac{\tau}{2} \Delta} e^{-i\tau B(\psi^*_{\ell-1})}e^{i\frac{\tau}{2} \Delta} \psi(t_{\ell-1})\\
			&=\int_{0}^\tau e^{i(\tau-\tau_1)\Delta} B(\psi^*_{\ell-1}(\tau_1)) \psi^*_{\ell-1}(\tau_1)d\tau_1\\
			&~~+\int_{0}^\tau e^{i(\tau-\tau_1)\Delta} \tau_1 B^2(\psi^*_{\ell-1}(\tau_1)) \psi^*_{\ell-1}(\tau_1)d\tau_1
			+r_5(\tau)+r_3(\tau)\\
			&~~-i\tau e^{i\frac{\tau}{2} \Delta} B(\psi^*_{\ell-1})\psi^*_{\ell-1}
			-\frac{1}{2} \tau^2 B^2(\psi^*_{\ell-1})\psi^*_{\ell-1}
			+r_2(\tau)\\
			&=\frac{\tau}{2}(f(0)+f(\tau))-\int_{0}^{\tau} f'(\tau_1) d\tau_1 +r_2(\tau)+r_3(\tau)+r_5(\tau)\\
			&=\frac{\tau^3}{2}\int_{0}^{1} \theta(1-\theta) f''(\theta) d\theta +r_2(\tau)+r_3(\tau)+r_5(\tau),
		\end{align*}
		where
		$f(\tau)=-ie^{i\frac{\tau}{2} \Delta} B(\psi^*_{\ell-1})\psi^*_{\ell-1}-\frac{1}{2} \tau B^2(\psi^*_{\ell-1})\psi^*_{\ell-1}$ and $\Vert f_p''(\tau) \Vert\leq C$ depending on the $H^4$-norm of $V$, $\psi^*_{\ell-1}$.
		Then, it follows that
		\begin{align*}
			&\Vert \psi(t_\ell)-e^{i\frac{\tau}{2} \Delta} e^{-i\tau B(\psi^*_{\ell-1})}e^{i\frac{\tau}{2} \Delta} \psi(t_{\ell-1})\Vert\\ 
			&~~\leq \frac{\tau^3}{2}\int_{0}^{1} \Vert f''(\theta) \Vert d\theta 
			+\Vert r_2(\tau)\Vert + \Vert r_3(\tau)\Vert + \Vert r_5(\tau)\Vert\\
			&~~\leq C\tau^3,
		\end{align*}
		which proves (ii).
	\end{proof}
	
	\subsection{Estimates in space}
	We first refer the interpolation error estimate of the quasiperiodic function \cite{Jiang2024Numerical}.
	\begin{lemma}
		\label{thm:pm error-2}
		Suppose that $\psi(\bx)\in \QP(\bbR^d)$ and its parent function $\psi_p(\by)\in X_{\alpha}(\bbT^n)$ with $\alpha > n/2$. There exists a constant $C$, independent of $\psi_p$ and $N$ such that
		\begin{align*}
			\Vert I_N \psi-\psi\Vert \leq CN^{-\alpha}\vert \psi_p \vert_{X_{\alpha}}.
		\end{align*}
	\end{lemma}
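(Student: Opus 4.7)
The plan is to reduce the quasiperiodic interpolation error to a Fourier analysis on the parent torus $\bbT^n$, using the identification of Fourier--Bohr coefficients of $\psi$ with Fourier coefficients of $\psi_p$ furnished by Lemma \ref{lemma:Birkhoff}. Since the $\bbQ$-linear independence of the columns of $\bP$ makes the frequencies $\{\bP\bk\}_{\bk\in\bbZ^n}$ all distinct, the Parseval identity \eqref{eq:Parseval} applied to
\begin{align*}
\psi(\bx)=\sum_{\bk\in\bbZ^n}\hpsi_{p,\bk}\,e^{i(\bP\bk)\cdot\bx},\qquad
I_N\psi(\bx)=\sum_{\bk\in K_N^n}\tpsi_{\bk}\,e^{i(\bP\bk)\cdot\bx}
\end{align*}
yields the orthogonal decomposition
\begin{align*}
\|I_N\psi-\psi\|^2=\sum_{\bk\in K_N^n}|\tpsi_{\bk}-\hpsi_{p,\bk}|^2+\sum_{\bk\notin K_N^n}|\hpsi_{p,\bk}|^2,
\end{align*}
separating the aliasing error from the truncation tail.

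I would first dispose of the tail. Since $\bk\notin K_N^n$ forces $\|\bk\|\geq N$, inserting the factor $(\|\bk\|/N)^{4\alpha}\geq 1$ gives $\sum_{\bk\notin K_N^n}|\hpsi_{p,\bk}|^2\leq N^{-4\alpha}|\psi_p|_{X_\alpha}^2$, which already beats the target rate. For the aliasing part, the standard Poisson-summation identity for the $(2N)^n$-point trapezoidal quadrature on $\bbT^n$ reads $\tpsi_{\bk}=\sum_{\bmm\in\bbZ^n}\hpsi_{p,\bk+2N\bmm}$, so
\begin{align*}
\tpsi_{\bk}-\hpsi_{p,\bk}=\sum_{\bmm\in\bbZ^n\setminus\{0\}}\hpsi_{p,\bk+2N\bmm}.
\end{align*}
Cauchy--Schwarz applied with symmetric weights $\|\bk+2N\bmm\|^{\pm\alpha}$ then gives
\begin{align*}
|\tpsi_{\bk}-\hpsi_{p,\bk}|^2\leq\Bigl(\sum_{\bmm\ne 0}\|\bk+2N\bmm\|^{-2\alpha}\Bigr)\Bigl(\sum_{\bmm\ne 0}\|\bk+2N\bmm\|^{2\alpha}|\hpsi_{p,\bk+2N\bmm}|^2\Bigr).
\end{align*}

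The main obstacle is bounding the structural factor $\sum_{\bmm\ne 0}\|\bk+2N\bmm\|^{-2\alpha}$ uniformly in $\bk\in K_N^n$. Here the key elementary estimate $\|\bk+2N\bmm\|\geq cN\|\bmm\|$ for $\bk\in K_N^n$ and $\bmm\ne 0$ reduces it to $CN^{-2\alpha}\sum_{\bmm\ne 0}\|\bmm\|^{-2\alpha}$, and it is precisely here that the hypothesis $\alpha>n/2$ is needed to guarantee convergence of this zeta-type series. After summing over $\bk$ and observing that the pairs $(\bk,\bmm)\in K_N^n\times(\bbZ^n\setminus\{0\})$ biject onto $\bbZ^n\setminus K_N^n$ via $\bj=\bk+2N\bmm$, I would obtain
\begin{align*}
\sum_{\bk\in K_N^n}|\tpsi_{\bk}-\hpsi_{p,\bk}|^2\leq CN^{-2\alpha}\sum_{\bj\notin K_N^n}\|\bj\|^{2\alpha}|\hpsi_{p,\bj}|^2\leq CN^{-2\alpha}|\psi_p|_{X_\alpha}^2,
\end{align*}
where the final step uses $\|\bj\|^{2\alpha}\leq\|\bj\|^{4\alpha}$ for $\bj\ne 0$. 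Combining this with the tail estimate and taking square roots delivers the claimed bound $\|I_N\psi-\psi\|\leq CN^{-\alpha}|\psi_p|_{X_\alpha}$.
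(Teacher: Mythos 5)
The paper does not prove this lemma at all---it is quoted verbatim from \cite{Jiang2024Numerical}---so there is no internal proof to compare against; what you have written is a self-contained replacement for that citation, and it is correct. Your argument is the classical aliasing analysis of trigonometric interpolation, transported to the quasiperiodic setting exactly as the paper's framework intends: Lemma \ref{lemma:Birkhoff} and the $\bbQ$-linear independence of the columns of $\bP$ make $\bk\mapsto\bP\bk$ injective on $\bbZ^n$, so the Parseval identity \eqref{eq:Parseval} legitimately splits $\Vert I_N\psi-\psi\Vert^2$ into the aliasing and truncation sums; the bound $\Vert\bk+2N\bmm\Vert\ge N\Vert\bmm\Vert$ for $\bk\in K_N^n$, $\bmm\ne 0$ holds coordinatewise (indeed with $c=1$, since $|k_j+2Nm_j|\ge 2N|m_j|-N\ge N|m_j|$ when $m_j\ne 0$); the resummation over $\bj=\bk+2N\bmm$ is a genuine bijection onto $\bbZ^n\setminus K_N^n$; and your choice of symmetric weights $\Vert\cdot\Vert^{\pm\alpha}$ is precisely what makes the zeta-type series require $2\alpha>n$, matching the stated hypothesis, while the crude bound $\Vert\bj\Vert^{2\alpha}\le\Vert\bj\Vert^{4\alpha}$ recovers the semi-norm $\vert\psi_p\vert_{X_\alpha}$ in the paper's convention (where $\vert U\vert_{X_\alpha}^2=\sum\Vert\bk\Vert^{4\alpha}\vert\hU_{\bk}\vert^2$) and yields exactly the claimed $N^{-\alpha}$ rate. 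The only step worth making explicit is the justification of the Poisson-summation identity $\tpsi_{\bk}=\sum_{\bmm}\hpsi_{p,\bk+2N\bmm}$: interchanging the Fourier series with the quadrature sum needs absolute convergence of $\sum_{\bell}\vert\hpsi_{p,\bell}\vert$, which follows from Cauchy--Schwarz against $\sum_{\bell}(1+\Vert\bell\Vert^{4\alpha})^{-1}<\infty$ since $\alpha>n/2>n/4$; this is standard but should be stated.
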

	
	\begin{lemma}
		\label{lem:trunAB}
		The following relations hold:
		
		(i) For $\phi\in \QP(\bbR^d)$ with the Fourier series expansion \eqref{eq:Fourierseries-v}, it holds that
		\begin{align*}
			\Vert I_N e^{i\frac{\tau}{2}\Delta} I_N \phi\Vert=\Vert I_N \phi\Vert.
		\end{align*}
		
		(ii) For $\phi,\varphi\in\QP(\bbR^d)$ with $n$-dimensional parent functions $\phi_p,\,\varphi_p\in X_{\alpha}(\bbT^n)$ for $\alpha>n/4$, if there exist constants $C_V>0$ and $C_p>0$ such that $\Vert V_p\Vert_{X_{\alpha}}\leq C_V$ and $\Vert \phi_p\Vert_{X_{\alpha}}, \Vert \varphi_p\Vert_{X_{\alpha}}\leq C_p$, then
		\begin{align*}
			\Vert I_N(e^{-i\tau B(\phi)}\phi-e^{-i\tau B(\varphi)}\varphi)\Vert
			\leq e^{C(C_V+\vert \theta\vert C_p^2)\tau} \Vert I_N(\phi-\varphi)\Vert.
		\end{align*}
	\end{lemma}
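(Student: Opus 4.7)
The plan is to handle the two parts separately, both resting on the discrete Parseval identity
\begin{align*}
\|I_N f\|^2 = \frac{1}{(2N)^n}\sum_{\by_{\bj}\in\bbT^n_N}|f_p(\by_{\bj})|^2,
\end{align*}
which follows from equating $\|I_N f\|^2 = \sum_{\bk\in K_N^n}|\tilde f_{\bk}|^2$ (via the quasiperiodic Parseval identity \eqref{eq:Parseval} and Lemma \ref{lemma:Birkhoff}) with the DFT Parseval formula at the grid nodes.

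For part (i), I would expand $I_N\phi = \sum_{\blam_{\bk}\in\sigma_N(\phi)}\tilde{\phi}_{\bk}\,e^{i\blam_{\bk}\cdot\bx}$ and observe, exactly as in Lemma \ref{lem:vp-norm}, that $e^{i\frac{\tau}{2}\Delta}$ multiplies each mode $e^{i\blam_{\bk}\cdot\bx}$ by a unit-modulus factor without changing its frequency support. Hence $e^{i\frac{\tau}{2}\Delta}I_N\phi$ still lies in the span of $\{e^{i\blam_{\bk}\cdot\bx}:\blam_{\bk}\in\sigma_N(\phi)\}$, on which the outer $I_N$ acts as the identity; applying Parseval mode by mode then yields $\|I_N e^{i\frac{\tau}{2}\Delta}I_N\phi\|^2 = \sum_{\bk}|\tilde{\phi}_{\bk}|^2 = \|I_N\phi\|^2$.

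For part (ii), I would mimic the Duhamel argument behind Lemma \ref{lem:ABerror}(ii). Setting $\psi_1(t) = e^{-itB(\phi)}\phi$ and $\psi_2(t) = e^{-itB(\varphi)}\varphi$, the variation-of-constants formula gives
\begin{align*}
\psi_1 - \psi_2 = e^{-itB(\phi)}(\phi-\varphi) + \int_0^t e^{-i(t-s)B(\phi)}\bigl(B(\phi)-B(\varphi)\bigr)\psi_2(s)\,ds,
\end{align*}
to which I apply $I_N$, commute it with the integral by linearity, and invoke the triangle inequality. The crucial observation is that $B(\phi) = V + \theta|\phi|^2$ is real-valued, so $e^{-irB(\phi)}$ acts pointwise as a unit-modulus multiplication; combined with the discrete Parseval identity above this yields the grid-level isometry $\|I_N(e^{-irB(\phi)} g)\| = \|I_N g\|$ for any $r$ and any quasiperiodic $g$. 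The first Duhamel term then contributes exactly $\|I_N(\phi-\varphi)\|$. For the integrand, the pointwise estimate $|(B(\phi)-B(\varphi))\psi_2(s)| \leq |\theta|(|\phi|+|\varphi|)|\varphi||\phi-\varphi|$, together with the $L^\infty$ bound $\|\phi\|_\infty, \|\varphi\|_\infty \leq C\|\phi_p\|_{X_\alpha}, C\|\varphi_p\|_{X_\alpha} \leq CC_p$ from Lemma \ref{lem:normineq} and the identity $|\psi_2(s)| = |\varphi|$ (since $B(\varphi)$ is real), gives $\|I_N((B(\phi)-B(\varphi))\psi_2(s))\| \leq C|\theta|C_p^2\,\|I_N(\phi-\varphi)\|$. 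Integrating in $s$ over $[0,\tau]$ produces the bound $(1+C|\theta|C_p^2\tau)\|I_N(\phi-\varphi)\|$, which absorbs into $e^{C(C_V+|\theta|C_p^2)\tau}\|I_N(\phi-\varphi)\|$ via $1+x\leq e^x$.

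The main obstacle I anticipate is that $I_N$ does not commute with pointwise multiplication, so the clean parent-space estimates of Lemma \ref{lemma:B-pro} do not transfer directly to $\|I_N(\cdot)\|$. The resolution is to push the entire analysis onto the grid through the discrete Parseval identity and to exploit the reality of the Kerr-type potential — which makes $e^{-irB(\phi)}$ a pointwise isometry — so that the nonlinear estimate reduces to controlling $|\phi-\varphi|$ pointwise, multiplied by an $L^\infty$ constant furnished by the parent-function regularity assumption $\|\phi_p\|_{X_\alpha},\|\varphi_p\|_{X_\alpha}\leq C_p$.
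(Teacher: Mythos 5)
Your proposal is correct, and part (i) coincides with the paper's argument (expand $I_N\phi$ in the truncated Fourier modes, note that $e^{i\frac{\tau}{2}\Delta}$ multiplies each mode by a unit-modulus factor without enlarging the frequency support, and apply Parseval). For part (ii), however, you take a genuinely different route. The paper does \emph{not} use the Duhamel representation with the propagator $e^{-i(t-s)B(\phi)}$; instead it writes the projected integrated equation $iI_N(\psi_1-\psi_2)(t)=I_N(\phi-\varphi)+\int_0^t I_N\bigl(B(\phi)\psi_1-B(\varphi)\psi_2\bigr)\,ds$, splits the integrand into a piece proportional to $\psi_1-\psi_2$ (from $V$ and $|\phi|^2$) and pieces proportional to $\phi-\varphi$, bounds all coefficients in $L^\infty_{QP}$ via Lemma \ref{lem:normineq}, and closes with Gronwall's inequality. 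You instead pre-solve the diagonal part: since $B(\phi)$ is real-valued, $e^{-irB(\phi)}$ is a pointwise unit-modulus multiplier, so the discrete Parseval identity $\Vert I_N f\Vert^2=(2N)^{-n}\sum_{\by_{\bj}}|f_p(\by_{\bj})|^2$ makes it an exact isometry for $\Vert I_N(\cdot)\Vert$; the first Duhamel term then contributes $\Vert I_N(\phi-\varphi)\Vert$ on the nose, the commutator term is estimated pointwise at the grid nodes, and $1+x\le e^x$ finishes the proof with no Gronwall step. Both arguments ultimately rest on the same grid-level mechanism — the paper uses it implicitly when it bounds $\Vert I_N V(\psi_1-\psi_2)\Vert\le\Vert V\Vert_{L^\infty_{QP}}\Vert I_N(\psi_1-\psi_2)\Vert$, which is only valid because $\Vert I_N(\cdot)\Vert$ sees nothing but collocation values — so your version has the merit of making that mechanism explicit and of handling the unitary part exactly, at the cost of having to justify the isometry $\Vert I_N(e^{-irB(\phi)}g)\Vert=\Vert I_N g\Vert$ as a separate statement (which your discrete Parseval identity does correctly, since $\bk\mapsto\bP\bk$ is injective on $K_N^n$ and the parent function of a product is the product of parent functions by Proposition \ref{pro:projectmatrix}). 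Your constant is in fact slightly sharper, as $C_V$ drops out of the difference $B(\phi)-B(\varphi)$; this is consistent with, and absorbed by, the stated bound.
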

	
	\begin{proof}
		(i) Since
		\begin{align*}
			I_N \phi=\sum_{\blam\in\sigma_N(\phi)} \tphi_{\blam} e^{i\blam\cdot \bx},
		\end{align*}
		where $\tphi_{\blam}$ is obtained by the discrete Fourier-Bohr transform of $\phi$,
		it follows that
		\begin{align*}
			I_N e^{i\frac{\tau}{2}\Delta} I_N \phi=\sum_{\blam\in\sigma_N(\phi)} \tphi_{\blam} e^{i\frac{\tau}{2}\Vert \blam\Vert} e^{i\blam\cdot \bx},
		\end{align*}
		which means that
		\begin{align*}
			\Vert I_N e^{i\frac{\tau}{2}\Delta} I_N \phi \Vert^2
			=\sum_{\blam\in\sigma_N(\phi)} \vert \tphi_{\blam}\vert^2
			=\Vert I_N \phi \Vert^2.
		\end{align*}
		
		(ii) Similar to the proof of Lemma \ref{lem:ABerror}, we consider the initial value problems \eqref{initialproblem-1} and \eqref{initialproblem-2} with solutions
		$\psi_1=e^{-itB(\phi)} \phi$ and $\psi_2=e^{-itB(\varphi)} \varphi.$
		Meanwhile, we consider the initial value problem
		\begin{align*}
			\begin{cases}
				i I_N\dfrac{d}{dt} (\psi_1-\psi_2)(t)= I_N(B(\phi)\psi_1(t)-B(\varphi)\psi_2(t)),\\
				I_N(\psi_1-\psi_2)(0)=I_N(\phi-\varphi).
			\end{cases}
		\end{align*}
		By integrating the above formula, we can obtain
		\begin{align*}
			i I_N(\psi_1-\psi_2)(t)=I_N(\phi-\varphi)+\int^t_0 I_N(B(\phi)\psi_1(\tau)-B(\varphi)\psi_2(\tau)) d\tau.
		\end{align*}
		Therefore, we denote $B_p(\varphi_p)=V_p+\theta\vert \varphi_p\vert^2$ to get
		\begin{align*}
			&\Vert  I_N(\psi_1-\psi_2)(t)\Vert\\
			&\leq \Vert I_N(\phi-\varphi)\Vert +\int^t_0 \Vert I_N(B(\phi)\psi_1(\tau)-B(\varphi)\psi_2(\tau))\Vert d\tau\\
			&\leq \Vert I_N(\phi-\varphi)\Vert +\int^t_0 \Vert I_NV(\psi_1-\psi_2) \Vert d\tau\\
			&~~+ \int^t_0 \vert \theta\vert (\Vert I_N \vert \phi\vert^2(\psi_1-\psi_2)\Vert
			+\Vert I_N \phi\psi_2 (\overline{\phi-\varphi}) \Vert +\Vert I_N(\phi-\varphi)\bar{\varphi}\psi_2(\tau)\Vert )d\tau\\
			&\leq \Vert I_N(\phi-\varphi)\Vert + \Vert V \Vert_{L_{QP}^{\infty}(\bbR^d)} \int^t_0 \Vert I_N(\psi_1-\psi_2) \Vert d\tau\\
			&~~+ \int^t_0 \vert \theta\vert \Big[\Vert \phi\Vert^2_{L_{QP}^{\infty}(\bbR^d)} \Vert I_N(\psi_1-\psi_2)\Vert+\Vert \phi\Vert_{L_{QP}^{\infty}(\bbR^d)}\Vert\psi_2\Vert_{L_{QP}^{\infty}(\bbR^d)} \Vert I_N(\phi-\varphi)\Vert\\
			&~~ 
			+\Vert \varphi\Vert_{L_{QP}^{\infty}(\bbR^d)} \Vert\psi_2\Vert_{L_{QP}^{\infty}(\bbR^d)} \Vert I_N(\phi-\varphi)\Vert  \Big] d\tau\\
			&=\Vert I_N(\phi-\varphi)\Vert + (\Vert V \Vert_{L_{QP}^{\infty}(\bbR^d)} +\vert \theta\vert \Vert \phi\Vert^2_{L_{QP}^{\infty}(\bbR^d)})\int^t_0 \Vert I_N(\psi_1-\psi_2) \Vert d\tau\\
			&~~+ (\Vert \phi\Vert_{L_{QP}^{\infty}(\bbR^d)}+\Vert \varphi\Vert_{L_{QP}^{\infty}(\bbR^d)}) \Vert I_N(\phi-\varphi)\Vert
			\int^t_0  \Vert e^{-i\tau B(\varphi)}\varphi\Vert_{L_{QP}^{\infty}(\bbR^d)}  d\tau \\
			&\leq \Vert I_N(\phi-\varphi)\Vert + (C_V +\vert \theta\vert C_p^2)
			\int^t_0 \Vert I_N(\psi_1-\psi_2) \Vert d\tau\\
			&~~+ 2 C\Vert I_N(\phi-\varphi)\Vert
			\int^t_0  \Vert e^{-i\tau B_p(\varphi_p)}\varphi_p\Vert_{X_{\alpha}}  d\tau 
			~~{\mbox{(Lemma \ref{lem:normineq}) }}\\
			&\leq  (C_V +\vert \theta\vert C_p^2)
			\int^t_0 \Vert I_N(\psi_1-\psi_2) \Vert d\tau
			+ \Big (1+2 C
			\int^t_0 \Vert \varphi_p\Vert_{X_{\alpha}}  d\tau \Big)\Vert I_N(\phi-\varphi)\Vert \\
			&\leq (C_V +\vert \theta\vert C_p^2)
			\int^t_0 \Vert I_N(\psi_1-\psi_2) \Vert d\tau
			+ (1+2 C_p^2t )\Vert I_N(\phi-\varphi)\Vert\\
			&\leq  (C_V +\vert \theta\vert C_p^2)
			\int^t_0 \Vert I_N(\psi_1-\psi_2) \Vert d\tau
			+ e^{ 2 C_p^2 t} \Vert I_N(\phi-\varphi)\Vert.
		\end{align*}
		Finally, we apply the Gronwall's inequality, see e.g. \cite[Lemma B.9]{Shen2011Spectral}, to prove (ii). 
	\end{proof}

	\begin{thm}
		\label{thm:phierror}
		Under the conditions of Lemma \ref{lem:trunAB}, the following estimate holds
		\begin{align*}
			\Vert \Upsilon_\phi \phi-\Upsilon_\varphi \varphi \Vert\leq e^{C(C_V+\vert \theta \vert C_p^2)\tau} \Vert I_N(\phi-\varphi) \Vert.
		\end{align*}
	\end{thm}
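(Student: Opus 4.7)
The plan is to reduce the claim to the two assertions in Lemma \ref{lem:trunAB} by unwinding the definition of $\Upsilon$ and exploiting norm-preservation layer by layer. First, I would write
\begin{align*}
\Upsilon_\phi \phi-\Upsilon_\varphi\varphi
= e^{i\frac{\tau}{2}\Delta}I_N\Bigl[e^{-i\tau B(\phi_N^{**})}e^{i\frac{\tau}{2}\Delta}I_N\phi
-e^{-i\tau B(\varphi_N^{**})}e^{i\frac{\tau}{2}\Delta}I_N\varphi\Bigr].
\end{align*}
Since the outermost operator is $e^{i\frac{\tau}{2}\Delta}$ applied to a $\QP(\bbR^d)$ function, Lemma \ref{lem:ABerror}(i) lets me strip it off with equality of $L^2_{QP}$-norms, reducing the task to estimating the norm of the bracketed expression after the interpolation $I_N$.

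Next, set $\tilde\phi:=e^{i\frac{\tau}{2}\Delta}I_N\phi$ and $\tilde\varphi:=e^{i\frac{\tau}{2}\Delta}I_N\varphi$, so that by the definition of $\phi_N^{**},\varphi_N^{**}$ the nonlinear potentials become $B(\tilde\phi)$ and $B(\tilde\varphi)$. Before invoking Lemma \ref{lem:trunAB}(ii) with $\tilde\phi,\tilde\varphi$ in place of $\phi,\varphi$, I need to check that these auxiliary functions still satisfy the $X_\alpha$-bound by $C_p$. This is where Lemma \ref{lem:vp-norm} (which gives $\Vert \tilde\phi_p\Vert_{X_\alpha}=\Vert (I_N\phi)_p\Vert_{X_\alpha}$) combines with the elementary fact that the spectral truncation $I_N$ is non-expansive in $X_\alpha$, yielding $\Vert \tilde\phi_p\Vert_{X_\alpha}\leq\Vert \phi_p\Vert_{X_\alpha}\leq C_p$ and the analogous bound for $\tilde\varphi_p$.

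Once the hypotheses are verified, Lemma \ref{lem:trunAB}(ii) directly gives
\begin{align*}
\Vert I_N\bigl(e^{-i\tau B(\tilde\phi)}\tilde\phi - e^{-i\tau B(\tilde\varphi)}\tilde\varphi\bigr)\Vert
\leq e^{C(C_V+|\theta|C_p^2)\tau}\,\Vert I_N(\tilde\phi-\tilde\varphi)\Vert.
\end{align*}
Finally, since $\tilde\phi-\tilde\varphi = e^{i\frac{\tau}{2}\Delta}I_N(\phi-\varphi)$, Lemma \ref{lem:trunAB}(i) applied to $\phi-\varphi$ produces $\Vert I_N(\tilde\phi-\tilde\varphi)\Vert = \Vert I_N\,e^{i\frac{\tau}{2}\Delta}I_N(\phi-\varphi)\Vert = \Vert I_N(\phi-\varphi)\Vert$, which chains with the previous estimate to deliver the desired bound.

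The main obstacle, as I see it, is not any single inequality but rather the bookkeeping: one must confirm that $\tilde\phi$ and $\tilde\varphi$ (which arise inside the iterated structure of $\Upsilon$) genuinely inherit the $X_\alpha$ size bound assumed for $\phi,\varphi$, so that Lemma \ref{lem:trunAB}(ii) is legitimately applicable. Verifying the non-expansiveness of $I_N$ in $X_\alpha$ and the norm invariance under $e^{i\frac{\tau}{2}\Delta}$ in tandem is the step to handle carefully; once that is in place, the remaining manipulations are purely algebraic applications of Lemma \ref{lem:trunAB} and Lemma \ref{lem:ABerror}.
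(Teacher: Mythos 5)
Your argument is correct and follows essentially the same route as the paper's (very terse) proof: strip the outer isometry $e^{i\frac{\tau}{2}\Delta}$, apply Lemma \ref{lem:trunAB}(ii) with $\phi_N^{**},\varphi_N^{**}$ playing the roles of $\phi,\varphi$, and collapse $\Vert I_N e^{i\frac{\tau}{2}\Delta}I_N(\phi-\varphi)\Vert$ back to $\Vert I_N(\phi-\varphi)\Vert$ via Lemma \ref{lem:trunAB}(i). The only small caveat is that $I_N$ here is trigonometric interpolation rather than spectral truncation, so in $X_\alpha$ it is only stable up to a constant (as the paper uses in Theorem \ref{thm:space-error-part}) rather than non-expansive; that still suffices to verify the hypotheses of Lemma \ref{lem:trunAB}(ii).
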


	\begin{proof}
		Applying Lemma \ref{lem:trunAB}, we have
		\begin{align*}
			\Vert \Upsilon_\phi \phi-\Upsilon_\varphi \varphi \Vert
			&= \Vert e^{\frac{i}{2}\tau \Delta}I_N [e^{-i\tau B(\phi_N^{**})} e^{\frac{i}{2}\tau \Delta} I_N\phi-e^{-i\tau B(\varphi_N^{**})} e^{\frac{i}{2}\tau \Delta} I_N\varphi] \Vert\\ 
			&=\Vert I_N (e^{-i\tau B(\phi_N^{**}) } e^{\frac{i}{2}\tau \Delta} I_N\phi-e^{-i\tau B(\varphi_N^{**})} e^{\frac{i}{2}\tau \Delta} I_N \varphi) \Vert\\
			&\leq e^{C(C_V+\vert \theta \vert C_p^2)\tau} \Vert I_N(\phi-\varphi) \Vert.
		\end{align*} 
		The proof of this theorem is completed.
	\end{proof}
	
	\begin{lemma}\cite[Lemma 5]{jiang2023High-accuracy}
		\label{lem:defectAIN-INA}
		For any $\phi\in \QP(\bbR^d)$,	it holds that
		\begin{align*}
			\Vert (I_N e^{\frac{i}{2}\tau \Delta}-e^{\frac{i}{2}\tau \Delta}I_N )\phi\Vert
			\leq \int^\tau_{0} \Vert[I_N-\calI,\Delta] e^{\frac{i}{2}\tau_1 \Delta} \phi \Vert \, d\tau_1,~~0<\tau_1 \leq \tau.
		\end{align*}
	\end{lemma}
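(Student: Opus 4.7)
My plan is a variation-of-constants (Duhamel) argument applied to the commutator defect
\[
    f(\tau):=\bigl(I_N e^{\frac{i}{2}\tau\Delta}-e^{\frac{i}{2}\tau\Delta}I_N\bigr)\phi,
\]
which satisfies $f(0)=0$. First I would differentiate $f$ in $\tau$ and insert $\pm\tfrac{i}{2}\Delta I_N e^{\frac{i}{2}\tau\Delta}\phi$ to obtain the inhomogeneous linear first-order ODE
\[
    f'(\tau)=\tfrac{i}{2}\Delta f(\tau)+\tfrac{i}{2}[I_N,\Delta]\,e^{\frac{i}{2}\tau\Delta}\phi.
\]
Since the identity operator $\calI$ commutes with $\Delta$, the source term equals $\tfrac{i}{2}[I_N-\calI,\Delta]\,e^{\frac{i}{2}\tau\Delta}\phi$, which (up to the scalar $\tfrac{i}{2}$) is exactly the integrand appearing in the statement.

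Next, treating $\tfrac{i}{2}\Delta$ as the free generator, Duhamel's formula together with the initial condition $f(0)=0$ gives the representation
\[
    f(\tau)=\tfrac{i}{2}\int_{0}^{\tau} e^{\frac{i}{2}(\tau-\tau_1)\Delta}\,[I_N-\calI,\Delta]\,e^{\frac{i}{2}\tau_1\Delta}\phi\,d\tau_1.
\]
Passing to the $L^2_{QP}$-norm, using the triangle inequality, and invoking the isometry $\|e^{it\Delta}w\|=\|w\|$ from Lemma \ref{lem:ABerror}(i) strips off the outer propagator $e^{\frac{i}{2}(\tau-\tau_1)\Delta}$ and produces
\[
    \|f(\tau)\|\leq \int_{0}^{\tau}\bigl\|[I_N-\calI,\Delta]\,e^{\frac{i}{2}\tau_1\Delta}\phi\bigr\|\,d\tau_1,
\]
which is the claimed estimate (with the innocuous constant $\tfrac12$ absorbed into the integrand or into downstream constants wherever this lemma is applied).

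The main subtlety I anticipate is justifying the differentiation and the Duhamel formula when $\Delta$ is unbounded on $L^2_{QP}(\bbR^d)$. I would handle this by first restricting $\phi$ to a finite Fourier-exponent truncation, on which $\Delta$ acts as a bounded diagonal Fourier multiplier and all ODE manipulations are elementary; since both $I_N$ and $e^{\frac{i}{2}\tau\Delta}$ preserve the Fourier spectrum $\sigma(\phi)$, the commutator and propagators remain inside the truncated subspace throughout. Passage to the limit is then straightforward by density and by the fact that the resulting integrand lies in $L^1(0,\tau)$ under the regularity of $\phi_p$ assumed elsewhere in the paper; the remaining algebra is routine.
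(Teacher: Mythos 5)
Your argument is correct: the commutator defect $f(\tau)$ vanishes at $\tau=0$, satisfies the inhomogeneous equation $f'(\tau)=\tfrac{i}{2}\Delta f(\tau)+\tfrac{i}{2}[I_N-\calI,\Delta]e^{\frac{i}{2}\tau\Delta}\phi$, and Duhamel plus the $L^2_{QP}$-isometry of $e^{it\Delta}$ gives the stated bound (in fact with the sharper constant $\tfrac12$), while your truncation-to-finite-Fourier-spectrum remark adequately handles the unboundedness of $\Delta$. The paper itself gives no proof of this lemma --- it is quoted from \cite[Lemma 5]{jiang2023High-accuracy} --- and your Duhamel/variation-of-constants derivation is precisely the standard route by which such interpolation--propagator commutator estimates are obtained, so there is nothing to flag.
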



	\begin{thm}
		\label{thm:space-error-part}
		Suppose $\phi\in \QP(\bbR^d)$ with  $\phi_p\in X_{\alpha}$ and $\Vert V_p \Vert_{X_\alpha} \leq C_V$ for some $\alpha>n/4$. Then the parent functions of $\phi^*= e^{\frac{i}{2}\tau \Delta}\phi$,
		$\phi_N^*=I_N e^{\frac{i}{2}\tau \Delta} \phi$
		and
		$\phi_N^{**}=e^{\frac{i}{2}\tau \Delta} I_N \phi$ belong to $X_{\alpha}$ and 
		\begin{align*}
			\Vert I_N\Gamma_\phi \phi-\Upsilon_\phi \phi \Vert \leq C\tau N^{-\alpha} 
			(\vert \phi_p\vert_\alpha+\vert \phi^*_p\vert_\alpha).
		\end{align*}
	\end{thm}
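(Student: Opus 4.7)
The plan is to split $I_N\Gamma_\phi\phi - \Upsilon_\phi\phi$ into three pieces that isolate the three sources of discrepancy: (a) the commutator of $I_N$ and the outer half-step propagator $e^{i\tau\Delta/2}$, (b) the replacement of $\phi$ by $I_N\phi$ before the first half-step, and (c) the replacement of the nonlinearity argument $\phi^*$ by $\phi_N^{**}$. To this end, introduce
\begin{align*}
A_1 &= e^{i\frac{\tau}{2}\Delta}\, I_N\, e^{-i\tau B(\phi^*)}\, e^{i\frac{\tau}{2}\Delta}\phi, \\
A_2 &= e^{i\frac{\tau}{2}\Delta}\, I_N\, e^{-i\tau B(\phi^*)}\, e^{i\frac{\tau}{2}\Delta}\, I_N\phi,
\end{align*}
and write $I_N\Gamma_\phi\phi - \Upsilon_\phi\phi = (I_N\Gamma_\phi\phi - A_1) + (A_1 - A_2) + (A_2 - \Upsilon_\phi\phi)$.

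The first piece equals $\bigl(I_N e^{i\tau\Delta/2} - e^{i\tau\Delta/2} I_N\bigr)\,e^{-i\tau B(\phi^*)} e^{i\tau\Delta/2}\phi$. I would apply Lemma \ref{lem:defectAIN-INA} to convert it to the time integral $\int_0^\tau \Vert[I_N - \calI,\Delta]\,e^{i\tau_1\Delta/2}\,e^{-i\tau B(\phi^*)}\,e^{i\tau\Delta/2}\phi\Vert\,d\tau_1$. The aliasing-type bound for $[I_N-\calI,\Delta]$ in $X_\alpha$ (as used in \cite{jiang2023High-accuracy}) produces an $N^{-\alpha}$ factor against a controlled $X_\alpha$-seminorm of the argument. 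The $X_\alpha$-regularity is propagated through each $e^{i\tau\Delta/2}$ by Lemma \ref{lem:vp-norm} and through $e^{-i\tau B(\phi^*)}$ by an estimate analogous to Lemma 2 of \cite{Thalhammer2012Convergence}, with the resulting constant depending on $C_V$ and $\Vert\phi^*_p\Vert_{X_\alpha}$. Integration over $[0,\tau]$ then supplies the factor $\tau$, yielding a bound of the required form for this piece.

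For the second piece, observe that $A_1 - A_2 = e^{i\tau\Delta/2}\, I_N\, e^{-i\tau B(\phi^*)}\, e^{i\tau\Delta/2}(\phi - I_N\phi)$. The missing factor of $\tau$ is injected by the fact that $I_N$ is a projection, so $I_N(\phi - I_N\phi) = 0$ and we may rewrite
\begin{align*}
A_1 - A_2 = e^{i\frac{\tau}{2}\Delta}\, I_N\,\Bigl[e^{-i\tau B(\phi^*)} e^{i\frac{\tau}{2}\Delta} - \calI\Bigr]\,(\phi - I_N\phi).
\end{align*}
Expanding the bracket via the Duhamel identity $e^{sD}-\calI = \int_0^s e^{rD}D\,dr$ applied to $D=-iB(\phi^*)$ and $D=i\Delta/2$, and bounding $B(\phi^*)$ by Lemma \ref{lemma:B-pro}(i), gives an $O(\tau)$ operator estimate; Lemma \ref{thm:pm error-2} then supplies $\Vert\phi - I_N\phi\Vert \leq C N^{-\alpha}|\phi_p|_{X_\alpha}$.

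The third piece $A_2 - \Upsilon_\phi\phi = e^{i\tau\Delta/2} I_N\bigl(e^{-i\tau B(\phi^*)} - e^{-i\tau B(\phi_N^{**})}\bigr) e^{i\tau\Delta/2} I_N\phi$ is treated exactly as in the proof of Lemma \ref{lem:ABerror}(ii): a Duhamel expansion together with Lemma \ref{lemma:B-pro}(ii) gives a bound proportional to $\tau\,\Vert\phi^* - \phi_N^{**}\Vert$, and the $L^2_{QP}$-isometry of Lemma \ref{lem:ABerror}(i) reduces this to $\tau\,\Vert\phi - I_N\phi\Vert \leq C\tau N^{-\alpha}|\phi_p|_{X_\alpha}$ via Lemma \ref{thm:pm error-2}. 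The auxiliary claim that $\phi^*_p,(\phi_N^*)_p,(\phi_N^{**})_p\in X_\alpha$ follows by combining Lemma \ref{lem:vp-norm} (which preserves the $X_\alpha$-norm under $e^{i\tau\Delta/2}$) with the standard $X_\alpha$-stability of the trigonometric interpolant. The main obstacle is the first piece: one must exploit the specific aliasing pattern of the quasiperiodic interpolant $I_N$ to obtain a genuine $N^{-\alpha}$ bound on $[I_N-\calI,\Delta]$ in terms of $X_\alpha$-norms, since a naive triangle-inequality bound is only $O(1)$; this is the step where the quasiperiodic setting demands genuinely new input beyond the periodic theory, and where the sharp seminorm combination $|\phi_p|_\alpha+|\phi^*_p|_\alpha$ in the theorem ultimately arises.
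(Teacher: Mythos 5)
Your decomposition agrees with the paper's for the first piece ($I_N\Gamma_\phi\phi-A_1$ is exactly the paper's $Z_1$, treated the same way via Lemma \ref{lem:defectAIN-INA}), and your third piece is sound, but your second piece contains a genuine gap. You claim that expanding the bracket $e^{-i\tau B(\phi^*)}e^{i\frac{\tau}{2}\Delta}-\calI$ by Duhamel ``gives an $O(\tau)$ operator estimate.'' It does not: the contribution from $D=i\Delta/2$ is $e^{-i\tau B(\phi^*)}\int_0^\tau e^{i\frac{r}{2}\Delta}\tfrac{i}{2}\Delta(\phi-I_N\phi)\,dr$, whose bound requires $\Vert \Delta(\phi-I_N\phi)\Vert$. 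Since $\phi-I_N\phi$ is concentrated at frequencies $\Vert\bk\Vert\gtrsim N$, the factor $e^{i\frac{\tau}{2}\Delta}-\calI$ acting on it is only $O(\min(1,\tau N^2))$, not $O(\tau)$; equivalently, $\Vert\Delta(\phi-I_N\phi)\Vert$ cannot be controlled by $N^{-\alpha}\vert\phi_p\vert_{X_\alpha}$ but only by a higher seminorm or a worse power of $N$. So under the stated hypothesis $\phi_p\in X_\alpha$ your second piece is only $O(N^{-\alpha})$ without the factor $\tau$ — and that factor is essential, since Theorem \ref{thm:PMerror} sums this bound over $m\sim T/\tau$ steps.

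The paper avoids this by never isolating the raw difference $\phi-I_N\phi$ from the nonlinear flow. Its middle term compares $e^{-i\tau B(\phi^*)}\phi^*$ with $e^{-i\tau B(I_N\phi^*)}I_N\phi^*$ (changing the argument of $B$ \emph{and} the input simultaneously) and invokes Lemma \ref{lem:trunAB}(ii), which measures the input discrepancy only through $\Vert I_N(\phi^*-I_N\phi^*)\Vert=0$; the remaining discrepancy $I_N(\phi_N^*-\phi_N^{**})$ is exactly the commutator $(I_Ne^{i\frac{\tau}{2}\Delta}-e^{i\frac{\tau}{2}\Delta}I_N)\phi$, and Lemma \ref{lem:defectAIN-INA} supplies both the factor $\tau$ and the factor $N^{-\alpha}\vert\phi^*_p\vert_{X_\alpha}$ without losing derivatives, because it bounds the commutator $[I_N-\calI,\Delta]$ rather than $(I_N-\calI)\Delta$. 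Your argument is repairable within your own framework if you split the bracket in the other order, $(e^{-i\tau B(\phi^*)}-\calI)e^{i\frac{\tau}{2}\Delta}+(e^{i\frac{\tau}{2}\Delta}-\calI)$, so that $(e^{i\frac{\tau}{2}\Delta}-\calI)(\phi-I_N\phi)$ sits directly under the outer $I_N$ and becomes $I_Ne^{i\frac{\tau}{2}\Delta}(\phi-I_N\phi)=(I_Ne^{i\frac{\tau}{2}\Delta}-e^{i\frac{\tau}{2}\Delta}I_N)\phi$, i.e.\ the commutator of Lemma \ref{lem:defectAIN-INA} — but as written the step fails.
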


	\begin{proof}
		By Lemma \ref{lem:vp-norm} and the definition of the interpolation operator $I_N$,
		we have $\Vert \phi^*_p\Vert_{X_{\alpha}}=\Vert \phi_p \Vert_{X_{\alpha}}$ and
		\begin{align*}
			&\Vert \phi^*_{N,p}\Vert_{X_{\alpha}} =\Vert (e^{i\frac{\tau}{2}\Delta}I_N \phi)_p \Vert_{X_{\alpha}}
			= \Vert e^{i\frac{\tau}{2}\Delta}(I_N \phi)_p \Vert_{X_{\alpha}}
			=\Vert I_N\phi_p \Vert_{X_{\alpha}},\\
			&\Vert \phi^{**}_{N,p}\Vert_{X_{\alpha}} =\Vert (I_Ne^{i\frac{\tau}{2}\Delta}\phi)_p \Vert_{X_{\alpha}}
			= \Vert I_N(e^{i\frac{\tau}{2}\Delta}\phi)_p \Vert_{X_{\alpha}}.
		\end{align*}
		Note that the interpolation operator $I_N$ acts on periodic functions now such that we employ $\Vert \phi_p\Vert_{X_{\alpha}}\leq C$, Lemma \ref{lem:vp-norm} and the stability of the interpolation operator $I_N$ (see e.g. \cite{Shen2011Spectral} and \cite[Lemma 1]{Got}) to get $\Vert I_N\phi_p \Vert_{X_{\alpha}} \leq C$ and $\Vert I_N(e^{i\frac{\tau}{2}\Delta}\phi)_p \Vert_{X_{\alpha}} \leq C$, which proves the first statement of this theorem.
		
		We then split the difference as
		\begin{align*}
			I_N\Gamma_\phi \phi-\Upsilon_\phi \phi
			&=I_N e^{\frac{i}{2}\tau \Delta} e^{-i\tau B(\phi^*)} e^{\frac{i}{2}\tau \Delta}\phi
			-e^{\frac{i}{2}\tau \Delta}I_N e^{-i\tau B(\phi_N^{**})}I_N e^{\frac{i}{2}\tau \Delta} I_N \phi\\
			&=I_N e^{\frac{i}{2}\tau \Delta} e^{-i\tau B(\phi^*)} e^{\frac{i}{2}\tau \Delta}\phi-
			e^{\frac{i}{2}\tau \Delta} I_N e^{-i\tau B(\phi^*)} e^{\frac{i}{2}\tau \Delta} \phi\\
			&~~+e^{\frac{i}{2}\tau \Delta} I_N e^{-i\tau B(\phi^*)} e^{\frac{i}{2}\tau \Delta}\phi-
			e^{\frac{i}{2}\tau \Delta} I_N e^{-i\tau B(\phi_N^*) } I_N e^{\frac{i}{2}\tau \Delta} \phi\\
			&~~+e^{\frac{i}{2}\tau \Delta} I_N e^{-i\tau B(\phi_N^*) } I_N e^{\frac{i}{2}\tau \Delta}\phi
			-e^{\frac{i}{2}\tau \Delta}I_N e^{-i\tau B(\phi_N^{**})}I_N e^{\frac{i}{2}\tau \Delta} I_N \phi\\
			&=:Z_1+Z_2+Z_3.
		\end{align*}
		We apply Lemmas \ref{lem:trunAB} and \ref{lem:defectAIN-INA} to bound $Z_1$--$Z_3$ as
		\begin{align*}
			\Vert Z_1 \Vert 
			&= \Vert I_N e^{\frac{i}{2}\tau \Delta} e^{-i\tau B(\phi^*)} e^{\frac{i}{2}\tau \Delta}\phi-
			e^{\frac{i}{2}\tau \Delta} I_N e^{-i\tau B(\phi^*)} e^{\frac{i}{2}\tau \Delta}\phi \Vert\\
			&= \Vert (I_N e^{\frac{i}{2}\tau \Delta}-
			e^{\frac{i}{2}\tau \Delta}I_N)e^{-i\tau B(\phi^*)} e^{\frac{i}{2}\tau \Delta} \phi\Vert\\
			&\leq \int^\tau_{0} \Vert[I_N-\calI,\Delta] e^{\frac{i}{2}\tau_1 \Delta} e^{-i\tau_1 B(\phi^*)} e^{\frac{i}{2}\tau_1 \Delta}\phi \Vert \, d\tau_1\leq C \tau N^{-\alpha} \vert \phi_p \vert_{X_{\alpha}},\\
			\Vert Z_2\Vert 
			&=\Vert e^{\frac{i}{2}\tau \Delta} I_N e^{-i\tau B(\phi^*)} e^{\frac{i}{2}\tau \Delta}\phi-
			e^{\frac{i}{2}\tau \Delta} I_N e^{-i\tau B(\phi_N^*)} I_N e^{\frac{i}{2}\tau \Delta}\phi\Vert\\
			&=\Vert I_N e^{-i\tau B(\phi^*)} e^{\frac{i}{2}\tau \Delta}\phi-
			I_N e^{-i\tau B(\phi_N^*)} I_N e^{\frac{i}{2}\tau \Delta}\phi\Vert\\
			&\leq e^{C(C_V+\vert \theta\vert C_p^2)}
			\Vert I_N e^{\frac{i}{2}\tau \Delta}\phi-I_N e^{\frac{i}{2}\tau \Delta}\phi\Vert
			=0,\\
			\Vert Z_3\Vert
			&=\Vert e^{\frac{i}{2}\tau \Delta} I_N e^{-i\tau B(\phi_N^*) } I_N e^{\frac{i}{2}\tau \Delta}\phi
			-e^{\frac{i}{2}\tau \Delta}I_N e^{-i\tau B(\phi_N^{**})}I_N e^{\frac{i}{2}\tau \Delta} I_N \phi\Vert\\
			&=\Vert I_N e^{-i\tau B(\phi_N^*)} I_N e^{\frac{i}{2}\tau \Delta}\phi
			-I_N e^{-i\tau B(\phi_N^{**})}I_N e^{\frac{i}{2}\tau \Delta} I_N \phi\Vert\\
			&\leq e^{C(C_V+\vert \theta \vert C_p^2)}\Vert I_N e^{\frac{i}{2}\tau \Delta}\phi
			- e^{\frac{i}{2}\tau \Delta} I_N\phi\Vert\leq C\tau N^{-\alpha} \vert \phi^*_p\vert_{X_{\alpha}},
		\end{align*}
		which completes the proof.
	\end{proof}

	\subsection{Estimate of intermediate solutions}
	
	We analyze the controllability of some intermediate variables.
	
	\begin{lemma}
		\label{lem:normeq}
		For $\ell=1,\cdots,m$, define 
		\begin{align*}
			\psi_{\ell}=\sum_{\bk\in\bbZ^n} \hpsi^{(\ell)}_{\bk} e^{i\theta_{\ell}}e^{i\bk\cdot \by},~~
			\varphi_{\ell}=\sum_{\bk\in\bbZ^n} \hpsi^{(\ell)}_{\bk} e^{i\tilde\theta_{\ell}}e^{i\bk\cdot \by},~~\theta_{\ell},~ \tilde\theta_{\ell}\in \bbR.
		\end{align*}
		Then, the following equality holds
		\begin{align*}
			\Big \Vert \sum_{\ell=1}^{m} \sum_{j_{\ell}=1}^{m}\cdots\sum_{j_{1}=1}^{m}\psi_{j_1}\psi_{j_2}\cdots\psi_{j_\ell} \Big \Vert
			=\Big \Vert \sum_{\ell=1}^{m} \sum_{j_{\ell}=1}^{m}\cdots\sum_{j_{1}=1}^{m}
			\varphi_{j_1} \varphi_{j_2}\cdots \varphi_{j_\ell} \Big \Vert.
		\end{align*}
	\end{lemma}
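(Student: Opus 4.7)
The plan is to reduce the claim to an identity of Fourier coefficients via the Parseval identity. By distributivity,
\begin{equation*}
\sum_{\ell=1}^m \sum_{j_1,\ldots,j_\ell=1}^m \psi_{j_1}\cdots\psi_{j_\ell} = \sum_{\ell=1}^m \Bigl(\sum_{j=1}^m \psi_j\Bigr)^{\ell},
\end{equation*}
and the same identity holds for the $\varphi$'s. Setting $f:=\sum_j\psi_j$ and $g:=\sum_j\varphi_j$, the task reduces to proving $\bigl\|\sum_{\ell=1}^m f^\ell\bigr\| = \bigl\|\sum_{\ell=1}^m g^\ell\bigr\|$.

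Next, I would expand each $\widehat{f^\ell}(\bk)$ via the convolution theorem. Since the Fourier coefficients of $\psi_j$ and $\varphi_j$ differ only by unit-modulus prefactors $e^{i\theta_j}$ versus $e^{i\tilde\theta_j}$, the coefficients of $f^\ell$ and $g^\ell$ are parallel expressions built from $\hpsi^{(j)}_\bk$, decorated by $e^{i(\theta_{j_1}+\cdots+\theta_{j_\ell})}$ and $e^{i(\tilde\theta_{j_1}+\cdots+\tilde\theta_{j_\ell})}$ respectively, when summed over $(j_1,\ldots,j_\ell)\in\{1,\ldots,m\}^\ell$ and $\bk_1+\cdots+\bk_\ell=\bk$. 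Applying Parseval to $\sum_\ell f^\ell$ reduces the equality of norms to the statement that, at each $\bk$, the squared modulus of the phase-decorated sum is independent of the particular phase choice.

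The heart of the proof is a phase-cancellation argument, which I would carry out as follows. I would pair each term in $|\sum_\ell \widehat{f^\ell}(\bk)|^2$ with its conjugate partner, and use the symmetry of the summation over the full cube $\{1,\ldots,m\}^\ell$ together with $|e^{i\theta_j}|=1$ to reorganize the double sum into a form depending only on the moduli $|\hpsi^{(j)}_\bk|$; the analogous rearrangement for $g$ then produces the same expression, so the two norms coincide. The main obstacle is precisely this rearrangement: in a generic sum $\sum_j e^{i\alpha_j}c_j$ the squared modulus depends nontrivially on the phases $\alpha_j$, so the argument must exploit the full symmetric summation domain $\{1,\ldots,m\}^\ell$ and the iterated convolution structure to force the phase dependence to drop out. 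I expect the cleanest execution proceeds by induction on the polynomial degree $m$, or equivalently on the number of factors $\ell$, combined with a symmetrization over the orderings of each multi-index $(j_1,\ldots,j_\ell)$.
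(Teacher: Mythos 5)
Your reduction --- distributivity to rewrite the left-hand side as $\bigl\Vert\sum_{\ell=1}^m f^\ell\bigr\Vert$ with $f=\sum_{j}\psi_j$, then the convolution theorem and Parseval --- is sound and is essentially the same expansion the paper performs: the paper writes both sides as iterated sums of products $\hpsi^{(j_1)}_{\bk_1}\cdots\hpsi^{(j_\ell)}_{\bk_{j_\ell}}$ carrying the phases $e^{i(\theta_{j_1}+\cdots+\theta_{j_\ell})}$ versus $e^{i(\tilde\theta_{j_1}+\cdots+\tilde\theta_{j_\ell})}$, and then concludes in one line from $\vert e^{i\theta}\vert=1$. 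Where you differ is that you correctly identify the remaining step --- that the squared modulus of a phase-decorated sum is not determined by the moduli of its terms --- as the heart of the proof, and you then leave it unexecuted, conjecturing that pairing conjugate partners, symmetrizing over the orderings of $(j_1,\ldots,j_\ell)$, and inducting will make the phase dependence drop out.

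That step is a genuine gap, and it cannot be closed by the rearrangement you describe: after grouping by total frequency $\bk_{1}+\cdots+\bk_{\ell}=\bK$, the cross terms $e^{i(\Theta_T-\Theta_{T'})}c_T\overline{c_{T'}}$ in the squared modulus depend irreducibly on the phases, and the symmetric summation domain $\{1,\ldots,m\}^\ell$ does not cancel them. Concretely, take $m=2$, $n=1$, $\hpsi^{(1)}_{0}=\hpsi^{(2)}_{0}=1$ and all other coefficients zero, with $\theta_1=0$, $\theta_2=\pi$, $\tilde\theta_1=\tilde\theta_2=0$: then $\psi_1+\psi_2=0$, so the left-hand side equals $\Vert 0+0^2\Vert=0$, while the right-hand side equals $\Vert 2+2^2\Vert=6$. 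So no symmetrization or induction can establish the identity at the stated level of generality; the equality requires additional structure tying the phases to the frequencies (as in the paper's application, where they come from the unitary propagator $e^{i\frac{\tau}{2}\Delta}$), and that is exactly the information both your argument and the paper's one-line conclusion discard at this point. Your instinct that this is the main obstacle was right; the proposed repair does not overcome it.
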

	
	\begin{proof}
		According to the definition, we have
		\begin{align*}
			&\sum_{\ell=1}^{m} \sum_{j_{\ell}=1}^{m}\cdots\sum_{j_{1}=1}^{m}
			\psi_{j_1}\psi_{j_2}\cdots \psi_{j_\ell}\\
			&=\sum_{\ell=1}^{m} \sum_{j_{\ell}=1}^{m}\cdots\sum_{j_{1}=1}^{m}
			\Big (\sum_{\bk_1\in\bbZ^n} \hpsi^{(j_1)}_{\bk_1} e^{i\theta_{j_1}}e^{i\bk_1\cdot \by} \Big)
			\Big(\sum_{\bk_2\in\bbZ^n} \hpsi^{(j_2)}_{\bk_2} e^{i\theta_2}e^{i\bk_2\cdot \by} \Big )
			\cdots
			\Big(\sum_{\bk_{j_\ell}\in\bbZ^n} \hpsi^{(j_\ell)}_{\bk_{j_\ell}} e^{i\theta_{j_\ell}}e^{i\bk_{j_\ell}\cdot \by} \Big )\\
			&=\sum_{\ell=1}^{m} \sum_{j_{\ell}=1}^{m}\cdots\sum_{j_{1}=1}^{m}
			\sum_{\bk_1,\ldots,\bk_{j_\ell}} 
			\hpsi^{(j_1)}_{\bk_1} \hpsi^{(j_2)}_{\bk_2}\cdots 
			\hpsi^{(j_\ell)}_{\bk_{j_\ell}}
			e^{i(\theta_{j_1}+\theta_{j_2}+\cdots+\theta_{j_\ell})}e^{i(\bk_{j_1}+\bk_{j_2}+\cdots+\bk_{j_\ell})\cdot \by} 
		\end{align*}
		and similarly,
		\begin{align*}
			&\sum_{\ell=1}^{m}
			\sum_{j_{\ell}=1}^{m}\cdots\sum_{j_{1}=1}^{m}
			\varphi_{j_1} \varphi_{j_2}\cdots \varphi_{j_\ell}\\
			&\qquad=\sum_{\ell=1}^{m}
			\sum_{j_{\ell}=1}^{m}\cdots\sum_{j_{1}=1}^{m}
			\sum_{\bk_1,\ldots,\bk_{j_\ell}} 
			\hpsi^{(j_1)}_{\bk_{j_1}}\hpsi^{(j_2)}_{\bk_{j_2}}\cdots 
			\hpsi^{(j_\ell)}_{\bk_{j_\ell}}
			e^{i(\tilde\theta_{j_1}+\tilde\theta_{j_2}+\cdots+\tilde\theta_{j_\ell})}e^{i(\bk_{j_1}+\bk_{j_2}+\cdots+\bk_{j_\ell})\cdot \by}. 
		\end{align*}
		Since $\vert e^{i \theta}\vert =1$ for any $\theta\in\mathbb R$, 
		we reach the conclusion.
	\end{proof}
	
	
	\begin{lemma}
		\label{lem:eq-inter-values}
		Assume that $\sup\{\Vert \psi_p(t)\Vert_{X_\alpha}:~0\leq t\leq T\}\leq C_p$ for some $\alpha>\max\{4,n/4\}$, then the intermediate values with $1\leq m\leq M$ could be bounded as
		\begin{align*}
			\tilde{a}(m-1)&=\sup_{\ell=1,2,\cdots,m-1,\atop j=\ell,\cdots,m-1}\{\Vert \psi_p(t_{\ell}) \Vert_{X_{\alpha}}, 
			\Vert (\Pi^{m}_{j=\ell}\Gamma_{\ell-1,j-1}\psi(t_{\ell-1}))_p \Vert_{X_{\alpha}}
			\}\leq C_p,\\
			\tilde{b}(m-1)&=\sup_{\ell=1,2,\cdots,m-1,\atop j=\ell,\cdots,m-1}\{\Vert (I_N \psi_{\ell})_p \Vert_{X_{\alpha}}, 
			\Vert (\Pi^{m}_{j=\ell}\Upsilon_{\ell-1,j-1}\psi_{\ell-1})_p \Vert_{X_{\alpha}}
			\}\leq C_p.
		\end{align*}
		
	\end{lemma}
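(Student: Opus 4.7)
The plan is to bound both $\tilde a(m-1)$ and $\tilde b(m-1)$ by induction on the number of composed splitting steps, relying on two stability properties already established in the paper. First, the free-Schr\"odinger propagator $e^{i\tau\Delta/2}$ preserves the $X_\alpha$-norm at the parent level (Lemma~\ref{lem:vp-norm}). Second, the pointwise nonlinear exponential satisfies the multiplicative growth estimate $\Vert e^{-i\tau(V_p+\theta|\phi_p^*|^2)}\varphi_p\Vert_{X_\alpha}\le e^{C(\Vert V_p\Vert_{X_\alpha}+|\theta|\Vert \phi_p^*\Vert_{X_\alpha}^2)\tau}\Vert\varphi_p\Vert_{X_\alpha}$ (the variant of Lemma~2 in Thalhammer et al., which was already invoked inside the proof of Lemma~\ref{lem:ABerror}). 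Throughout the proposal I work at the parent-function level on $\bbT^n$, because all the norms in the statement are $X_\alpha(\bbT^n)$-norms.

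For $\tilde a(m-1)$, the bound $\Vert\psi_p(t_\ell)\Vert_{X_\alpha}\le C_p$ is immediate from the hypothesis. For the composed iterate $(\Pi_{j=\ell}^{m}\Gamma_{\ell-1,j-1}\psi(t_{\ell-1}))_p$, I would first check that a single step obeys
\begin{align*}
\Vert(\Gamma_{\ell-1,j-1}\phi)_p\Vert_{X_\alpha}
\le e^{C(C_V+|\theta|\Vert\phi_p\Vert_{X_\alpha}^2)\tau}\Vert\phi_p\Vert_{X_\alpha},
\end{align*}
by sandwiching the pointwise multiplier between the two $X_\alpha$-isometric propagators via Lemma~\ref{lem:vp-norm}. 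A straightforward induction on the number of steps, with base $\Vert\psi_p(t_{\ell-1})\Vert_{X_\alpha}\le C_p$, then yields after at most $M$ iterations a bound of the form $C_p\exp(C(C_V+|\theta|C_p^2)T)$, which is a fixed constant depending only on $C_V$, $C_p$, $|\theta|$ and $T$. Absorbing this exponential factor into the generic constant (as done elsewhere in the paper with the same abuse of notation) gives the claimed bound $\le C_p$.

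For $\tilde b(m-1)$, I would handle $\Vert(I_N\psi_\ell)_p\Vert_{X_\alpha}$ by appealing to the $X_\alpha$-stability of the trigonometric interpolation $I_N$ on $\bbT^n$ (the same result used in the first paragraph of the proof of Theorem~\ref{thm:space-error-part}, coming from \cite{Shen2011Spectral} and \cite[Lemma~1]{Got}), combined with $\Vert\psi_{\ell,p}\Vert_{X_\alpha}\le C_p$. For $(\Pi_{j=\ell}^m\Upsilon_{\ell-1,j-1}\psi_{\ell-1})_p$, the per-step estimate now reads
\begin{align*}
\Vert(\Upsilon_{\ell-1,j-1}\phi)_p\Vert_{X_\alpha}
\le C\,e^{C(C_V+|\theta|\Vert(\phi_N^{**})_p\Vert_{X_\alpha}^2)\tau}\Vert\phi_p\Vert_{X_\alpha},
\end{align*}
where each factor $I_N$ contributes a bounded stability constant and $e^{i\tau\Delta/2}$ contributes an isometry. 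Iterating $m-\ell+1\le M$ times and absorbing $e^{CT}$ into the generic constant again yields the bound $\le C_p$.

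The main obstacle is the self-referential nature of the bound for $\Upsilon_{\ell-1,j-1}$: the exponent of the pointwise multiplier depends on the intermediate quantity $\phi_N^{**}=e^{i\tau\Delta/2}I_N\phi$, whose $X_\alpha$-norm must be bounded \emph{simultaneously} by the same inductive procedure. Concretely, the induction hypothesis has to be strong enough to assert that every intermediate iterate appearing in the splitting (both $\phi^*$-type free-propagation images and their $I_N$-images) has parent-function $X_\alpha$-norm under control by $C_p$, so that the constant $|\theta|C_p^2$ in the Thalhammer exponent does not degrade along the time stepping. Setting up this joint bootstrap, and carefully tracking that each step contributes only $e^{C\tau}$ (so the total growth is at most $e^{CT}$ independent of the step count), is where the bulk of the technical care is needed; once it is in place, the two claimed bounds follow in parallel.
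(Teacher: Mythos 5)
Your per-step stability estimate is fine, but the way you close the argument has a genuine gap: iterating the multiplicative bound $\Vert(\Gamma\phi)_p\Vert_{X_\alpha}\le e^{C(C_V+|\theta|\Vert\phi_p\Vert_{X_\alpha}^2)\tau}\Vert\phi_p\Vert_{X_\alpha}$ over $m\le M$ steps gives $C_p\,e^{C(C_V+|\theta|B^2)T}$, where $B$ is the very bound you are trying to establish on the intermediate iterates. The bootstrap you sketch requires $e^{C(C_V+|\theta|B^2)T}C_p\le B$, and since each step contributes a factor strictly greater than $1$, this self-consistency condition closes only for $T$ small relative to $C_V$, $|\theta|$ and $C_p$ --- not for the arbitrary fixed $T$ of Theorem~\ref{thm:PMerror}. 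You correctly flag this self-referential difficulty as "where the bulk of the technical care is needed," but pure iteration of the stability factor cannot resolve it; a uniform-in-$m$ bound on the splitting iterates requires comparing the numerical solution with the exact solution (whose norm is controlled by hypothesis), not composing per-step growth factors.

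This is exactly what the paper does differently. It introduces the auxiliary periodic NLS \eqref{eq:QSE-parent} on $\bbT^n$, applies the same Strang splitting and Fourier pseudo-spectral discretization there, and imports the boundedness of the resulting periodic iterates from \cite[Theorems 3.1 and 3.4]{Gauckler2010Convergence} --- results whose proofs rest on the convergence of the scheme toward the bounded exact solution rather than on iterated stability. The remaining work is then to show the identities $\tilde a(m-1)=\tilde a_p(m-1)$ and $\tilde b(m-1)=\tilde b_p(m-1)$, which is nontrivial because the parent function of $e^{i\frac{\tau}{2}\Delta}\psi$ carries the phases $e^{i\frac{\tau}{2}\Vert\bP\bk\Vert^2}$ while the periodic propagator applied to $\psi_p$ produces $e^{i\frac{\tau}{2}\Vert\bk\Vert^2}$; Lemma~\ref{lem:normeq} is introduced precisely to show that these phase discrepancies do not change the $X_\alpha$-norms after the nonlinear exponential is expanded. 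Your proposal needs neither that identity nor Lemma~\ref{lem:normeq} (an inequality per step suffices for your route), but without the transfer to the periodic problem and the Gauckler-type result, the uniform bound over $[0,T]$ does not follow.
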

	
	\begin{proof}
		Define a high-dimensional auxiliary periodic nonlinear Schr\"odinger equation
		\begin{align}
			\begin{cases}
				i\dfrac{\partial W}{\partial t}=-\Delta W+V_p W+\theta \vert W\vert^2 W ,~~(\by,t)\in\bbT^n\times[T_0,T],\\
				W(T_0)=W(\by, T_0).
			\end{cases}
			\label{eq:QSE-parent}
		\end{align}
		We apply the Strang splitting method in time and the Fourier pseudo-spectral method in space to solve \eqref{eq:QSE-parent}, which leads to the semidiscrete-in-time  and the fully-discrete numerical schemes
		\begin{align*}
			&W_m=\Pi_{j=1}^{m} e^{i\frac{\tau}{2}\Delta}e^{-i\tau (V_p+\theta\vert W^*_{j-1}\vert^2)}e^{i\frac{\tau}{2}\Delta}W_{0},\\
			&W_{N,m}=\Pi_{j=1}^{m} e^{i\frac{\tau}{2}\Delta}I_Ne^{-i\tau (V_p+\theta\vert W^*_{N,j-1}\vert^2)}e^{i\frac{\tau}{2}\Delta}I_NW_{0},
		\end{align*}
		where $W^*_{j-1}=e^{i\frac{\tau}{2}\Delta}W_{j-1}$, $W^*_{N,j-1}=e^{i\frac{\tau}{2}\Delta}I_NW_{j-1}$ and $W_0=W(T_0)$.
		When we take $W_0=\psi_p(t_{\ell-1})$ with $T_0=t_\ell$ for $\ell=1,\cdots,m$, respectively, different numerical solutions can be obtained and the sets of the upper bounds of these numerical solutions are
		\begin{align*}
			&\tilde{a}_p(m-1)=\sup_{\ell=1,2,\cdots,m-1,\atop j=\ell,\cdots,m-1}\{\Vert \psi_p(t_{\ell}) \Vert_{X_{\alpha}}, 
			\Vert \Pi^{m}_{j=\ell}\Gamma_{\ell-1,j-1,p}\psi_p(t_{\ell-1}) \Vert_{X_{\alpha}}
			\},\\
			&\Gamma_{\ell-1,j-1,p} \psi_p(t_{\ell-1})= e^{i\frac{\tau}{2}\Delta}e^{-i\tau (V_p+\theta\vert W^*_{j-1}\vert^2)}e^{i\frac{\tau}{2}\Delta}\psi_p(t_{\ell-1}),
		\end{align*}
		and 
		\begin{align*}
			&\tilde{b}_p(m-1)
			=\sup_{\ell=1,2,\cdots,m-1,\atop j=\ell,\cdots,m-1}\{\Vert I_N \psi_{\ell,p} \Vert_{X_{\alpha}},
			\Vert \Pi^{m}_{j=\ell}\Upsilon_{\ell-1,j-1,p}W_{\ell-1}\Vert_{X_{\alpha}}\},\\
			&\Psi^*_{\ell-1}=e^{i\frac{\tau}{2}\Delta}\psi_p(t_{\ell-1}),~~W_{\ell-1}=e^{i\frac{\tau}{2}\Delta}e^{-i\tau (V_p+\theta\vert \Psi^*_{\ell-1}\vert^2)}e^{i\frac{\tau}{2}\Delta}\psi_p(t_{\ell-1}) ,\\
			&\Upsilon_{\ell-1,j-1,p} W_{\ell-1}
			= e^{i\frac{\tau}{2}\Delta} I_Ne^{-i\tau (V_p+\theta\vert W^*_{N,\ell-1}\vert^2)}e^{i\frac{\tau}{2}\Delta} I_N W_{\ell-1}.
		\end{align*}
		By \cite[Theorems 3.1 and 3.4]{Gauckler2010Convergence}, $\tilde{a}_p(m-1)\leq C_p$ and $\tilde{b}_p(m-1)\leq C_p$ is proved when $\sup\{\Vert \psi_p(t)\Vert_{X_\alpha}:~0\leq t\leq T\}\leq C_p$ where $C_p$ depends on $d$, $\alpha$ and $T$.
		Thus we remain to show
		\begin{align}
			\tilde{a}(m-1)=\tilde{a}_p(m-1)
			~~\mbox{and}~~\tilde{b}(m-1)=\tilde{b}_p(m-1),
			\label{eq:Object}
		\end{align}
		such that the proof could be completed.
		We focus on the first relation  $\tilde{a}(m-1)=\tilde{a}_p(m-1)$ in (\ref{eq:Object}) since the second relation, which could be viewed as a truncated version of the first one, could be proved similarly. To prove the first relation, it suffices to show
		\begin{align}
			\Vert ( e^{i\frac{\tau}{2}\Delta}e^{-i\tau (V+\theta\vert \psi^*\vert^2)}e^{i\frac{\tau}{2}\Delta}\psi)_p \Vert_{X_{\alpha}} 
			=\Vert  e^{i\frac{\tau}{2}\Delta}e^{-i\tau (V_p+\theta\vert \psi_p^*\vert^2)}e^{i\frac{\tau}{2}\Delta}\psi_p \Vert_{X_{\alpha}},
			\label{eq:Xalpha}
		\end{align}
		which is equivalent to
		\begin{align*}
			\Vert ( e^{i\frac{\tau}{2}\Delta}e^{-i\tau (V+\theta\vert \psi^*\vert^2)}e^{i\frac{\tau}{2}\Delta}\psi)_p \Vert 
			=\Vert  e^{i\frac{\tau}{2}\Delta}e^{-i\tau (V_p+\theta\vert \psi_p^*\vert^2)}e^{i\frac{\tau}{2}\Delta}\psi_p \Vert,\\
			\Vert \Delta^{\alpha}( e^{i\frac{\tau}{2}\Delta}e^{-i\tau (V+\theta\vert \psi^*\vert^2)}e^{i\frac{\tau}{2}\Delta}\psi)_p \Vert 
			=\Vert  \Delta^{\alpha} e^{i\frac{\tau}{2}\Delta}e^{-i\tau (V_p+\theta\vert \psi_p^*\vert^2)}e^{i\frac{\tau}{2}\Delta}\psi_p \Vert,
		\end{align*}
		where $\psi^*=e^{i\frac{\tau}{2}\Delta}\psi$ and $\psi^*_p=e^{i\frac{\tau}{2}\Delta}\psi_p$.
		By Lemma \ref{lem:vp-norm} we have
		\begin{align*}
			&\Vert ( e^{i\frac{\tau}{2}\Delta}e^{-i\tau (V+\theta\vert \psi^*\vert^2)}e^{i\frac{\tau}{2}\Delta}\psi)_p \Vert \\	
			&=\Vert (e^{-i\tau (V+\theta\vert \psi^*\vert^2)}e^{i\frac{\tau}{2}\Delta}\psi)_p \Vert=\Vert e^{-i\tau (V_p+\theta\vert \tilde \psi_p^*\vert^2)}(e^{i\frac{\tau}{2}\Delta}\psi)_p \Vert\\
			&=\Vert(e^{i\frac{\tau}{2}\Delta}\psi)_p \Vert=\Vert \psi_p \Vert
			=\Vert  e^{i\frac{\tau}{2}\Delta}e^{-i\tau (V_p+\theta\vert \psi_p^*\vert^2)}e^{i\frac{\tau}{2}\Delta}\psi_p \Vert,
		\end{align*}
		where $\tilde{\psi}_p^*=(e^{i\frac{\tau}{2}\Delta}\psi)_p$.
		Secondly,
		\begin{align*}
			\Vert \Delta^{\alpha}( e^{i\frac{\tau}{2}\Delta}e^{-i\tau (V+\theta\vert \psi^*\vert^2)}e^{i\frac{\tau}{2}\Delta}\psi)_p \Vert 
			&=\Vert \Delta^{\alpha}e^{i\frac{\tau}{2}\Delta}(e^{-i\tau (V+\theta\vert \psi^*\vert^2)}e^{i\frac{\tau}{2}\Delta}\psi)_p \Vert \\
			&=\Vert e^{i\frac{\tau}{2}\Delta}\Delta^{\alpha}(e^{-i\tau (V+\theta\vert \psi^*\vert^2)}e^{i\frac{\tau}{2}\Delta}\psi)_p \Vert \\
			&= \Vert \Delta^{\alpha} (e^{-i\tau (V+\theta\vert \psi^*\vert^2)}e^{i\frac{\tau}{2}\Delta}\psi)_p \Vert\\
			&= \Vert \Delta^{\alpha} e^{-i\tau (V_p+\theta\vert \tilde{\psi}_p^*\vert^2)}\tilde{\psi}^*_p \Vert,
		\end{align*}
		and
		\begin{align*}
			\Vert  \Delta^{\alpha} e^{i\frac{\tau}{2}\Delta}e^{-i\tau (V_p+\theta\vert \psi_p^*\vert^2)}e^{i\frac{\tau}{2}\Delta}\psi_p \Vert 
			= \Vert \Delta^{\alpha} e^{-i\tau (V_p+\theta\vert \psi_p^*\vert^2)}\psi^*_p \Vert.
		\end{align*}
		Meanwhile, for 
		$\psi=\sum_{\blam\in \sigma(\psi)} \hat{\psi}_{\blam}e^{i\blam \cdot \bx},$
		then
		$\psi_p=\sum_{\bk\in \bK} \hat{\psi}_{\bk}e^{i\bk \cdot \by},~~\hat{\psi}_{\bk}=\hat{\psi}_{\blam},$
		and
		\begin{align*}
			\tilde{\psi}_p^*=\sum_{\bk\in \bK} \hat{\psi}_{\bk} e^{i\frac{\tau}{2}\Vert \blam\Vert^2} e^{i\bk \cdot \by},~~
			\psi_p^*=\sum_{\bk\in \bK} \hat{\psi}_{\bk} e^{i\frac{\tau}{2}\Vert \bk\Vert^2} e^{i\bk \cdot \by}.
		\end{align*}
		When $\alpha=1$, we have
		\begin{align*}
			\Delta (e^{-i\tau (V_p+\theta\vert \tilde{\psi}_p^*\vert^2)}\tilde{\psi}^*_p) 
			=e^{-i\tau (V_p+\theta\vert \tilde{\psi}_p^*\vert^2)}\{ [V'_p+\theta(\vert \tilde{\psi}_p^*\vert^2)']\tilde{\psi}^*_p+(\tilde{\psi}^*_p)'\}.
		\end{align*}
		When $\alpha>1$, the expansion expression is similar, which we will not elaborate on here.
		Applying Lemma \ref{lem:normeq}, it follows that
		\begin{align*}
			\Vert \Delta^{\alpha} e^{-i\tau (V_p+\theta\vert \tilde{\psi}_p^*\vert^2)}\tilde{\psi}^*_p \Vert
			= \Vert \Delta^{\alpha} e^{-i\tau (V_p+\theta\vert \psi_p^*\vert^2)}\psi^*_p \Vert,
		\end{align*}
		therefore the equation \eqref{eq:Xalpha} holds. This means that the equation \eqref{eq:Object} is true and this lemma is proved.
	\end{proof}
	


	\section{Numerical implementation}
	\label{sec:Num}
	
	We perform numerical experiments to substantiate the effectiveness and accuracy of the fully discrete scheme (\ref{eq:splitting-PM}).
	All algorithms are implemented using MSVC++ 14.29 on Visual Studio Community 2019. The FFT is implemented by the software FFTW 3.3.5\,\cite{frigo2005design}. All computations are performed on a workstation with an Intel Core 2.30GHz CPU, 16GB RAM. Denote the computational time in seconds by CPU(s), and the $L_{QP}^2$-norm error of the numerical solution  is computed by
	$\Err^\tau_{h}=\Vert \Psi_M- \psi(\cdot,t_M)\Vert$.
	The temporal convergence order is calculated by 
	$\kappa=\ln(\Err^{\tau_1}_{h}/\Err^{\tau_2}_{h})/\ln(\tau_{1}/\tau_{2}).
	$
	
	\subsection{One-dimensional case}
	Let $T=0.001$ with the incommensurate potential and initial value 
	\begin{align*}
		V(x)=\sin x + \sum_{k=1}^{4}\sin (k\sqrt{3}x),~~
		\psi_0(x)=\sum_{\lambda\in \sigma(\psi_0)}\hat{\psi}_{\lambda}e^{i\lambda x},
	\end{align*}
	where $\hat{\psi}_{\lambda}=e^{-(\vert m_1\vert +\vert m_2\vert)}$ and $\sigma(\psi_0)=\{\lambda = m_1+m_2\sqrt{3}: m_1,m_2\in \bbZ, -32\leq m_1,m_2\leq 31\}$. 
	Then
	$
	V_p(y_1,y_2)=\sin y_1 + \sum_{k=1}^{4}\sin (k y_2)$ for $ (y_1, y_2)\in\bbR^2/2\pi\bbZ^2.
	$
	The exact solution in $\Err^\tau_{h}$ is replaced by the numerical solution with a very small time step size $\tau=1\times 10^{-6}$ and $h=\pi/64$. 
	Numerical results are presented in Tables \ref{tab:error}-\ref{tab:timeerror}, which demonstrate the efficiency of the proposed method and the exponential convergence in space and the second-order accuracy in time, as proved in Theorem \ref{thm:PMerror}.
	
	\begin{table}[h]
		\centering
		\footnotesize{
			\caption{Error $\Err^\tau_{h}$ and CPU time for $\tau=1\times 10^{-6}$ and different $N$ and $\theta$.}
			\vspace{0.2cm}
			\label{tab:error}
			\renewcommand\arraystretch{1.8}
			\setlength{\tabcolsep}{3mm}
			{\begin{tabular}{|c|c|c|c|c|c|c|}\hline
					& $2N \times 2N$ &$4\times 4$  & $8\times 8$   & $16\times 16$ &$32\times 32$  &$64\times 64$    \\ \hline
					\multirow{2}*{$\theta$=0} & $\Err^\tau_{h}$  &  2.784e-03 & 5.080e-04  &  1.692e-05  &  1.134e-08  & 6.923e-14  \\ \cline{2-7}
					& CPU(s)  &0.005      &    0.01  &  0.027     & 0.100    &0.360    \\ \hline
					\multirow{2}*{$\theta$=1} & $\Err^\tau_{h}$  &  2.783e-03 & 5.087e-04  &  1.710e-05  &  1.254e-08  & 5.405e-14  \\ \cline{2-7}
					& CPU(s)  &0.006      &    0.009  &  0.025     & 0.098    &0.366     \\ \hline
					\multirow{2}*{$\theta$=10}&$\Err^\tau_{h}$  &  2.810e-03 & 5.689e-04  &  2.832e-05  &  5.213e-08  & 7.805e-14  \\ \cline{2-7}
					& CPU(s)  &0.006      &    0.009  &  0.027     & 0.094    &0.393    \\ \hline
					\multirow{2}*{$\theta$=20}&$\Err^\tau_{h}$ &  2.935e-03 & 7.364e-04  &  5.075e-05  &  1.237e-07  & 2.715e-13  \\ \cline{2-7}
					& CPU(s)  &0.005      &    0.011  &  0.024     & 0.102    &0.366    \\ \hline
			\end{tabular}}
		}
	\end{table}
	
	\begin{table}[h] 
		\centering
		\footnotesize{
			\caption{Error $\Err^\tau_{h}$ for $h=\pi/64$ and $\theta=10$. }\label{tab:timeerror}
			\vspace{0.1cm}
			\renewcommand\arraystretch{1.8}
			\setlength{\tabcolsep}{3mm}
			{\begin{tabular}{|c|c|c|c|c|c|}\hline
					$\tau$   &$1\times 10^{-3}$   &$5\times 10^{-4}$  &  $1\times 10^{-4}$  & $5\times 10^{-4}$  &$1\times 10^{-5}$\\ \hline
					$\Err^\tau_{h}$& 4.050e-05        & 1.615e-06 & 4.378e-07   & 1.611e-08   &3.798e-09     \\ \hline
					$\kappa$  &- &      2.00 &2.00   & 2.00 &2.00    \\ \hline
			\end{tabular}}
		}
	\end{table}
	
	Physically, solitons are localized nonlinear modes, which is found in quasiperiodic Schr\"odinger systems \cite{Ablowitz2006Solitons,Berti2012Sobolev}.
	To observe the soliton, we take the Gaussian-type initial value $\psi_p(\by, 0) = e^{-(y^2_1+y^2_2)/2}$ for $ \by\in [0,2\pi)^2$, $\theta=1$, $h=\pi/64$ and $\tau=10^{-6}$. Figure \ref{fig:soliton-EG1} shows the evolution of  the soliton at the origin.

	\begin{figure}[h]
		\centering
		\subfigure[$T=0.01$]{
			\includegraphics[width=1.62in,height=1.6in]{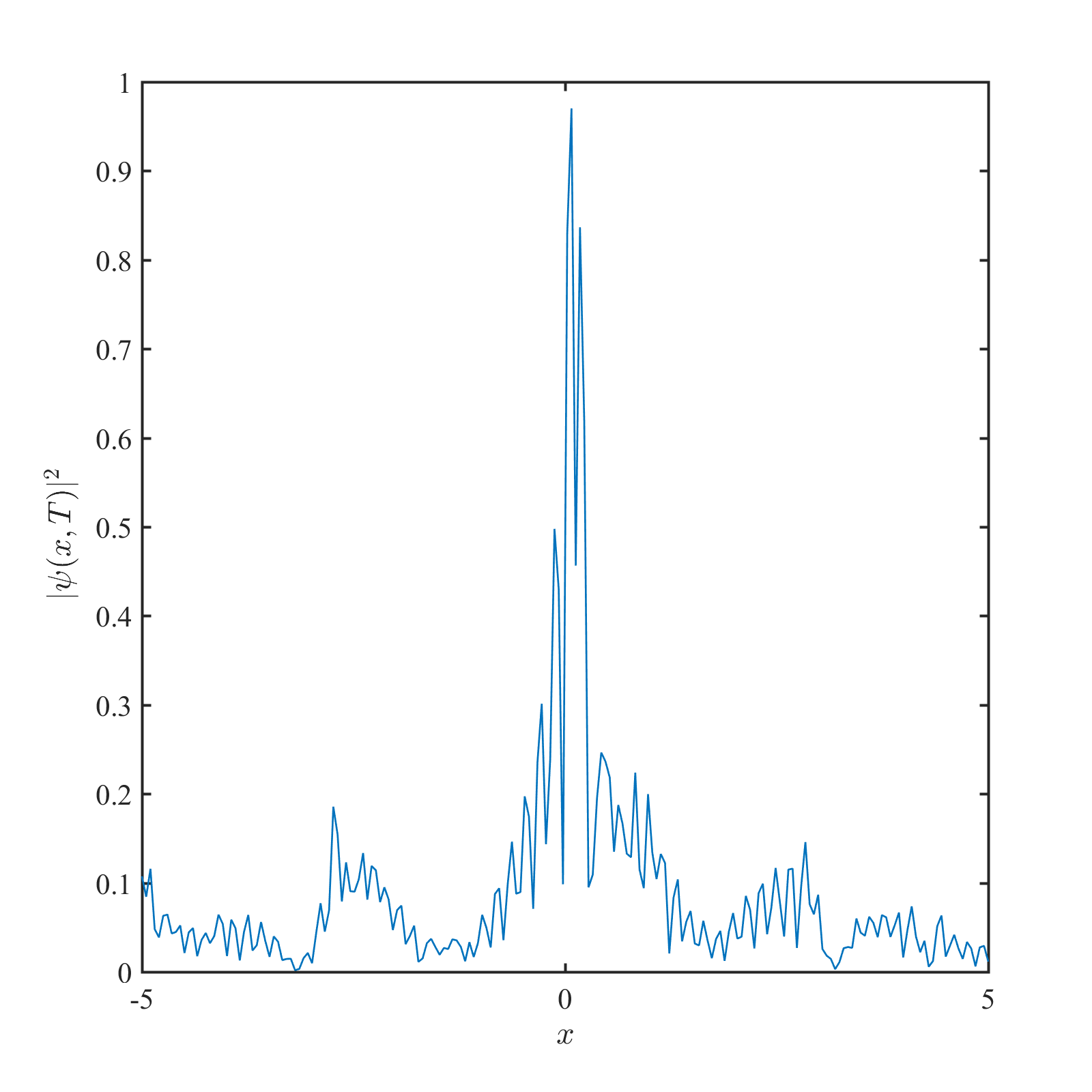}}
		\subfigure[$T=0.005$]{
			\includegraphics[width=1.62in,height=1.6in]{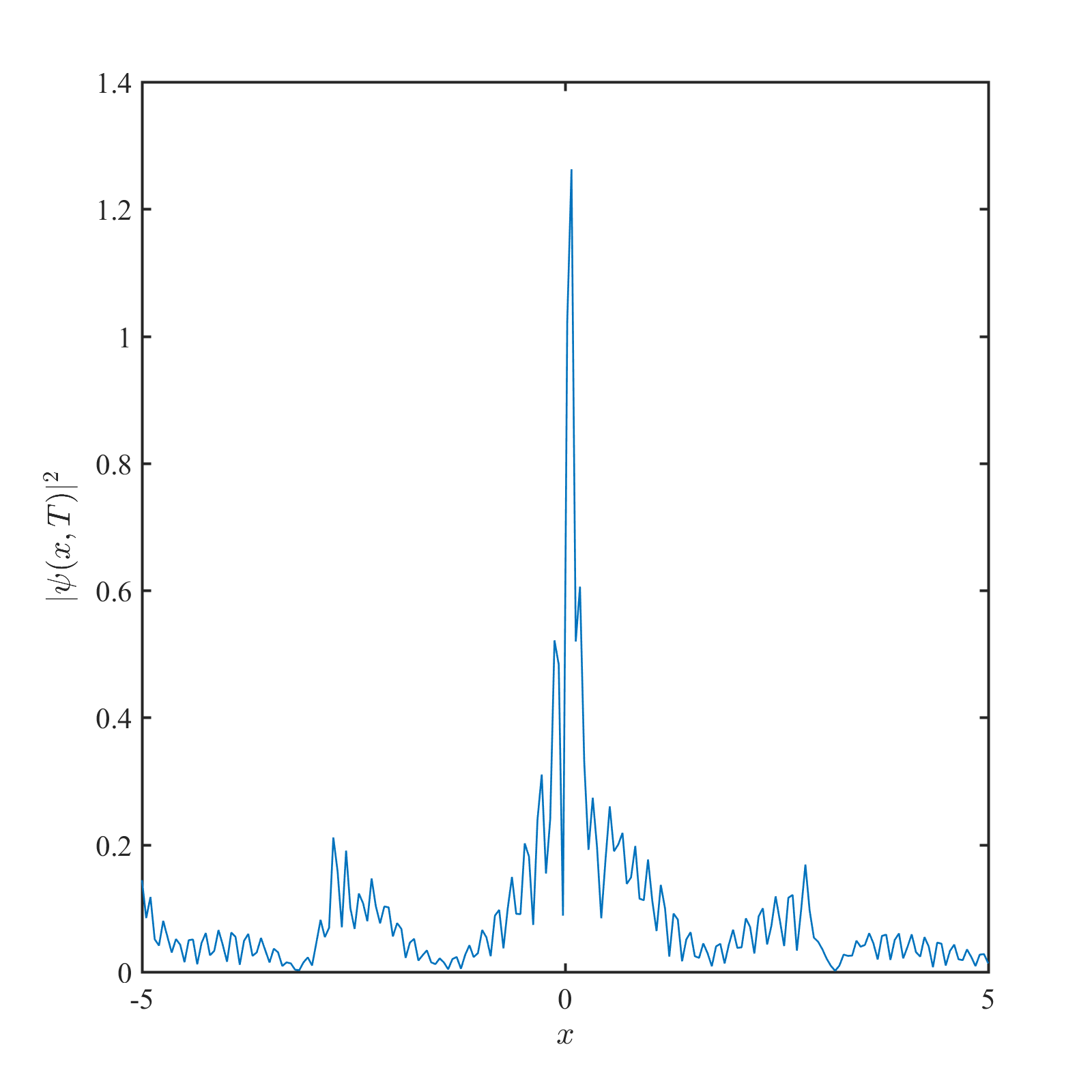}}
		\subfigure[$T=0.001$]{
			\includegraphics[width=1.62in,height=1.6in]{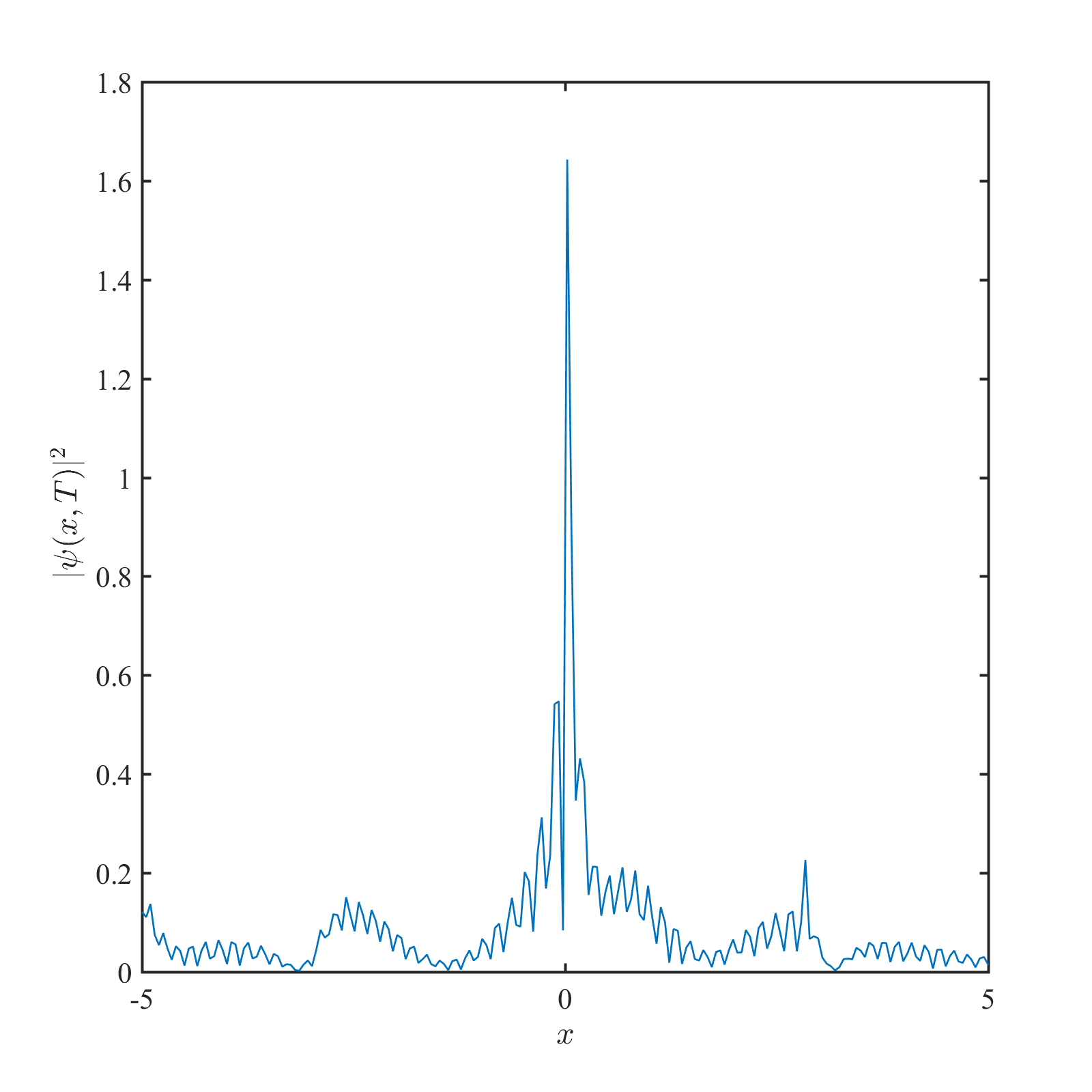}
		}
		\caption{The soliton at different time $T$.} \label{fig:soliton-EG1}
	\end{figure}

	
	\subsection{Two-dimensional case}
	Let $T=0.0001$ and consider the real-value potential function
	\begin{align*}
		V(\bx)=\sum_{j=1}^{3} \cos{(\bP\bk_j)\cdot \bx},	
	\end{align*}
	where the projection matrix
	\begin{align*}
		\bP = \begin{pmatrix}
			1 &\ds \cos\frac{\pi}{6} &\ds \sin\frac{\pi}{6} & 0\\[0.1in]
			0 &\ds \sin\frac{\pi}{6} &\ds \cos\frac{\pi}{6} & 1
		\end{pmatrix},~~(\bk_1, \bk_2, \bk_3)= 
		\begin{pmatrix}
			0 & 0 & 2 \\ 
			1 & -1& 0 \\
			0 & 3 & 0 \\
			-1 & 0 & 1 
		\end{pmatrix}.
	\end{align*} 
	Then we have
	$\ds V_p(\by)=\sum_{j=1}^{3} \cos{ (\bk_j\cdot \by)}$ for $\by\in \mathbb{T}^4 = \bbR^4/2\pi\bbZ^4.$
	Let the initial value 
	\begin{align*}
		\psi_0(\bx)=\sum_{\blam\in \sigma(\psi_0)}\hat{\psi}_{\blam}e^{i\blam\cdot \bx},
	\end{align*}
	where $\hat{\psi}_{\bm \lambda}=e^{-(\vert k_1\vert +\vert k_2\vert+\vert k_3\vert+\vert k_4\vert )}$, 
	and $\sigma(\psi_0)=\{\blam = \bP\bk: k_1, k_2, k_3\in \bbZ, -16\leq k_1, k_2, k_3\leq 15\}$.
	Therefore, this quasiperiodic system corresponds to a four-dimensional parent system.
	
	The exact solution in $\Err^\tau_{h}$ is replaced by the numerical solution under a very small time step size $\tau=1\times 10^{-7}$ and $h=\pi/32$. Numerical results are presented in Tables \ref{tab:error-2d}-\ref{tab:timeerror-2d}, which again verify the error estimates in Theorem \ref{thm:PMerror}.
	We also observe that the solitons appear in the center of the origin shown in Figure \ref{fig:soliton-EG2}, where the the quasiperiodic potential attains the global maximum. Meanwhile, these solitons have a dimple, which is similar to that found in two-dimensional Penrose lattices in \cite{Ablowitz2006Solitons}.

	\begin{table}[h]
		\centering
		\footnotesize{
			\caption{Error $\Err^\tau_{h}$ and CPU time for $\tau=1\times 10^{-7}$, $\theta=1$ and different $N$.}
			\vspace{0.1cm}
			\label{tab:error-2d}
			\renewcommand\arraystretch{1.8}
			\setlength{\tabcolsep}{2.8mm}
			{\begin{tabular}{|c|c|c|c|c|}\hline
					$2N \times 2N \times 2N \times 2N$ &$4\times 4\times 4\times 4$  & $8\times 8\times 8\times 8$   & $16\times 16\times 16\times 16$ &$32\times 32\times 32\times 32$     \\ \hline
					$\Err^\tau_{h}$  &  3.948e-04 & 5.221e-05  & 2.625e-07  & 9.941e-11  \\ \cline{1-5}
					CPU(s)  &0.029     &  0.376  &  7.612    & 119.641      \\ \hline
			\end{tabular}}
		}
	\end{table}

	\begin{table}[h] 
		\centering
		\footnotesize{
			\caption{Error $\Err^\tau_{h}$  for $h=\pi/8$ and $\theta=1$. }\label{tab:timeerror-2d}
			\vspace{0.1cm}
			\renewcommand\arraystretch{1.8}
			\setlength{\tabcolsep}{3mm}
			{\begin{tabular}{|c|c|c|c|c|c|}\hline
					$\tau$   &$1\times 10^{-4}$   &$2\times 10^{-5}$  &  $1\times 10^{-5}$  & $2\times 10^{-6}$  &$1\times 10^{-6}$\\ \hline
					$\Err^\tau_{h}$& 8.253e-08        & 3.301e-09 & 8.252e-10   & 3.293e-11   &8.174e-12     \\ \hline
					$\kappa$  &- &      2.00 &2.00   & 2.00 &2.01   \\ \hline
			\end{tabular}}
		}
	\end{table}

	\begin{figure}[h]
		\centering
		\subfigure[Cross section along the $x_2$ axis of a soliton]{
			\includegraphics[width=2.2in,height=2in]{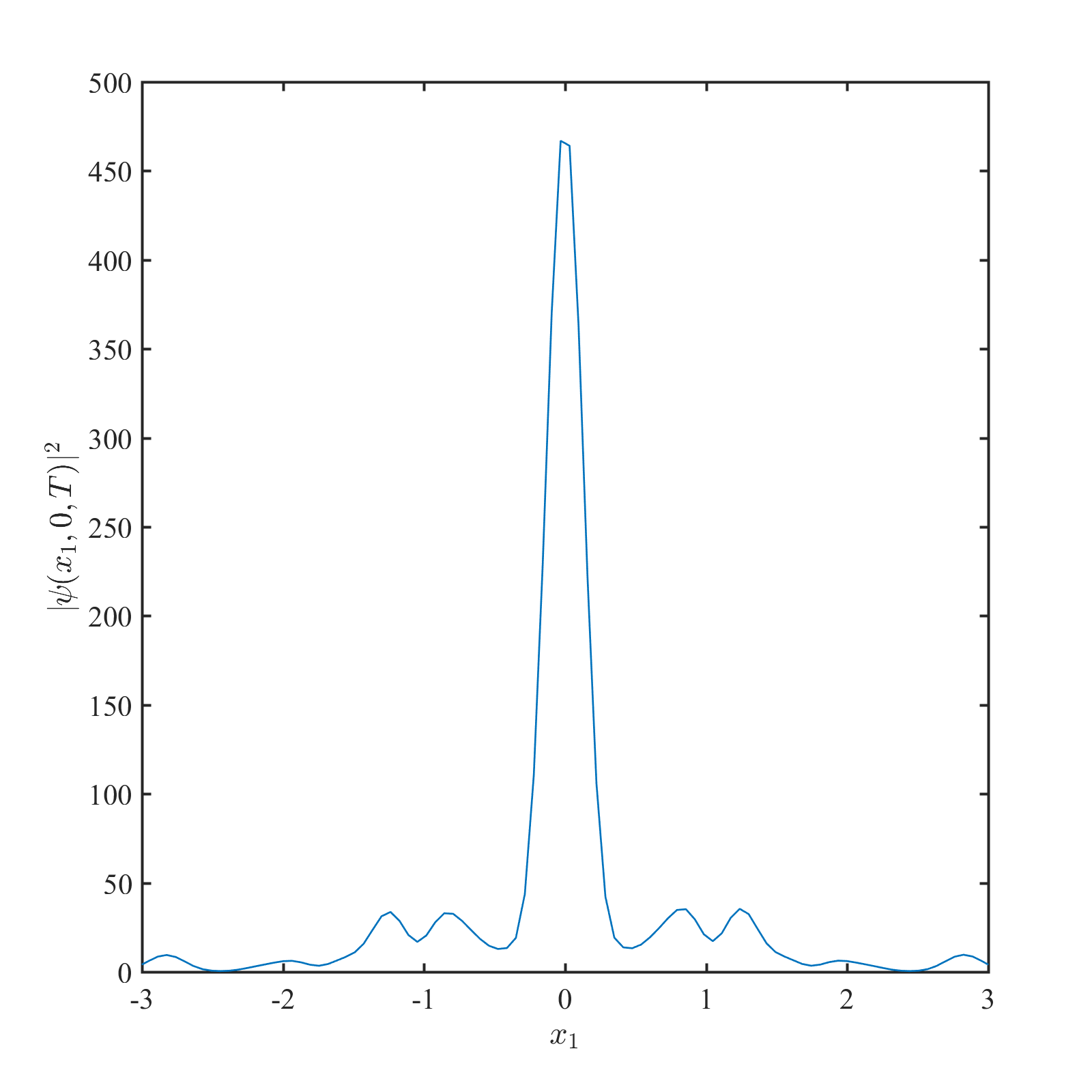}}
		\hspace{0.3in}
		\subfigure[3D view of a soliton’s intensity]{
			\includegraphics[width=2.2in,height=2in]{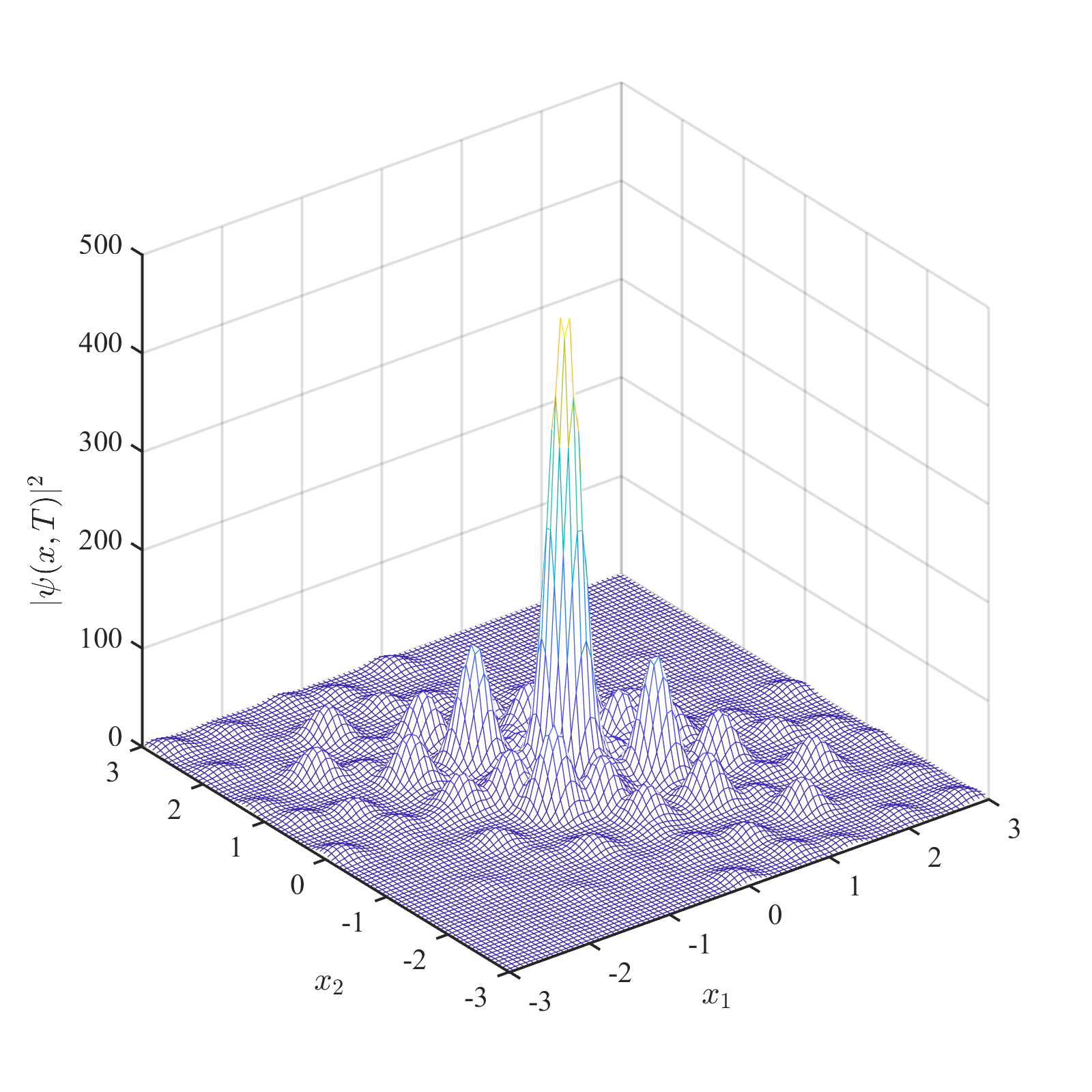}}
		\caption{Fundamental soliton located at the center.} \label{fig:soliton-EG2}
	\end{figure}

	\section{Conclusions}
	\label{sec:diss}
	
	The NQSE plays an important role in various fields, but effective numerical methods are still not available in the literature. In this paper, an efficient and accurate numerical algorithm for solving the NQSE  is presented, and rigorous error analysis is given. Compared with numerical analysis of periodic problems, many conventional analysis tools  do not apply for the quasiperiodic case, while the transfer between spaces of low-dimensional quasiperiodic and high-dimensional periodic functions and its coupling with the nonlinearity of the operator splitting scheme make the analysis intricate. 
	Many new ideas and methods are developed to address these issues, which could also promote the development of numerical analysis of other kinds of quasiperiodic systems.

	%
	%
	


	

	

\end{document}